\DeclareFontFamily{OML}{script}{}
\DeclareFontShape{OML}{script}{m}{it}
{ <5-20> rsfs10 }{}
\DeclareMathAlphabet{\mathscript}{OML}{script}{m}{it}
\renewcommand{\mathcal}[1]{{\mathscript #1}\hspace{0.2ex}}
\newcommand{\red}{\color{red}}
\newcommand{\re}[1]{\mbox{\rm$($\ref{#1}$)$}}
\newcommand{\m}{\hspace{1em}}
\newcommand{\mm}{\hspace{2em}}
\newcommand{\p}{\partial}
\def\s{\sigma}
\renewcommand{\epsilon}{\varepsilon}
\renewcommand{\t}{\widetilde}
\newcommand{\text}{\mbox}
\newcommand{\operatorname}{\mathop}
\newcommand\be{\begin{equation}}
\newcommand\ee{\end{equation}}
\newcommand\bea{\begin{eqnarray}}
\newcommand\eea{\end{eqnarray}}
\newcommand\beaa{\begin{eqnarray*}}
\newcommand\eeaa{\end{eqnarray*}}
\newcommand{\dif}{\mathrm{d}}
\newcommand*{\dv}{\mathrm{div}}
\newenvironment{eqa}{\begin{equation}%
  \begin{array}{rcl}}{\end{array}\end{equation}}
\newcommand\beqa{\begin{eqa}}
\newcommand\eeqa{\end{eqa}}
\numberwithin{equation}{section}
\renewcommand{\tilde}{\widetilde}
\renewcommand{\hat}{\widehat}
\renewcommand{\bar}{\overline}
\newtheorem{thm}{Theorem}[section]
\newtheorem{lem}{Lemma}[section]
\newtheorem{rem}{Remark}[section]
\newcommand{\void}[1]{}
\numberwithin{equation}{section}
\begin{document}
\title{The impact of time delay in a tumor model\footnote{\today}
}
\author{Xinyue Evelyn Zhao}\author{Bei Hu}
\address{Department of Applied Computational Mathematics and Statistics,
University of Notre Dame, Notre Dame, IN 46556, USA}
\email{xzhao6@nd.edu, b1hu@nd.edu}
\maketitle

\begin{abstract}
In this paper we consider a free boundary tumor growth model with a time delay in cell proliferation and study how time delay affects the stability and the size of the tumor. The model is a coupled system of an elliptic equation, a parabolic equation and an ordinary differential equation. It incorporates the cell location under the presence of time delay, with the tumor boundary as a free boundary. A parameter $\mu$ in the model is proportional to the ``aggressiveness'' of the tumor. It is proved that there exists a unique classical radially symmetric stationary solution $(\sigma_*, p_*, R_*)$ which is stable for any $\mu > 0$ with respect to all radially symmetric perturbations (c.f. \cite{delay1}). However, under non-radially symmetric perturbations, we prove that there exists a critical value $\mu_*$, such that if $\mu<\mu_*$ then the stationary solution $(\sigma_*, p_*, R_*)$ is linearly stable; whereas if $\mu>\mu_*$ the stationary solution is unstable. It is actually unrealistic to expect the problem to be stable for large tumor aggressiveness parameter, therefore our result is more reasonable. Furthermore, we established that adding the time delay in the model would result in a larger stationary tumor, and if the tumor aggressiveness parameter is larger, then the time delay would have a greater impact on the size of the tumor.
\end{abstract}

\section{Introduction}
Over the last few decades, an increasing number of PDE models describing solid tumor growth in forms of free boundary problems have been proposed and studied. All these models provided a better and deeper understanding of the tumor growth.  The basic reaction-diffusion tumor model was studied in Greenspan \cite{G1, G2}, Cui and Escher \cite{CE1, CE3}, Escher and Matioc \cite{Escher4}, Bazaliy and Friedman \cite{BVF, BVF1}, Friedman and Hu\cite{FH3, FH4}, Friedman and Reitich \cite{FR1, FR2}. Furthermore, the basic model can be extended to more sophisticated ones by adding different factors. For example, Byrne and Chaplain \cite{BC},  Cui \cite{Cui1}, Cui and Friedman \cite{CF2}, Wang \cite{Zejia}, Wu and Zhou \cite{WZ1, WZ2}, Xu {\em et.\ al.}  \cite{delay1, delay5, delay6} analyzed the tumor growth under the effect of inhibitor; Friedman, Hu and Kao \cite{cellcycle1, cellcycle2, cellcycle3, cellcycle4} considered a multiscale tumor model by adding cell cycle; and Friedman {\em et.\ al.} \cite{FFH1, angio2, angio1} added the effect of angiogenesis. See also \cite{A1, C4, F4, FH5, RCS, other2, other3, Fengjie, Hongjing, W} for other extensions to a variety of different tumor models.

One biological meaningful extension of the basic tumor model is to add the effect of {\bf time delay $\tau$}. In real life, time delays can arise everywhere, since every process, whether it is long or short, would consume time. Time delays can represent gestation times, incubation periods, transport delays, or can simply lump complicated biological processes together, accounting for the time required for these processes to complete. The basic tumor model can be viewed as an approximation of model with time delay, since time delay $\tau$ is rather small compared with the time range $[0,T]$ we consider. However, compared with the basic model, model with time delay are more accurate and consistent with real life.

Here we propose a tumor growth model with time delay in cell proliferation. The time delay is reflected between the time at which a cell commences mitosis and the time at which the daughter cells are produced (it takes approximately 24 hours). In this model, oxygen and glucose are viewed as nutrients, with its concentration $\sigma$ satisfying the reaction-diffusion equation
\begin{equation}\label{1.1}
\lambda\sigma_t =\Delta\sigma   -\sigma\quad\text{in the tumor region }\Omega(t),
\end{equation}
where $-\sigma$ is the nutrients consumed by the tumor. Since the diffusion rate of oxygen or glucose (e.g. $\sim\text{1 }\text{min}^{-1}$) is much faster than the rate of cell proliferation (e.g. $\sim\text{1 }\text{day}^{-1}$), $\lambda$ is very small and can sometimes be set to be 0 (quasi-steady state approximation).

By conservation of mass, cell proliferation rate $S = \dv\vec{V}$, where $\vec{V}$ denotes the velocity field of cell movement within the tumor. Due to the presence of time delay, the tumor grows at a rate which is related to the nutrient concentration when it starts mitosis. For a simple approximation, we assume a linear relationship between the cell proliferation rate and the nutrient concentration:
\begin{equation}\label{1.2}
S = \mu[\sigma(\xi(t-\tau;x,t),t-\tau)-\tilde{\sigma}],
\end{equation}
(it is also a first order Taylor expansion for fully nonlinear model), where $\tilde{\sigma}$ is a threshold concentration, $\mu$ is a parameter expressing the ``intensity" of tumor expansion, and $\xi(s;x,t)$ represents the cell location at time $s$ as cells are moving with the velocity field $\vec{V}$. The function $\xi(s;x,t)$ satisfies the ODE
\begin{equation}\label{1.3}
\left \{
\begin{split}
&\frac{\dif \xi}{\dif s} = \vec{V}(\xi,s),\quad t-\tau\le s\le t,\\
&\xi\big{|}_{s=t} = x.
\end{split}
\right.
\end{equation}
In another word, $\xi$ tracks the path of the cell currently located at $x$. In this problem, (\ref{1.3}) describes how cells, including both interior cells and cells on the boundary, move due to the presence of time delay. Adding time delay to the basic tumor growth model makes our problem more reasonable and yet more challenging.

Furthermore, if the tumor is assumed to be of porous medium type where Darcy's law (i.e., $\vec{V}=-\nabla p$, where $p$ is the pressure, here we consider extracellular matrix as ``porous medium" in which cell moves) can be used, then
\begin{equation}\label{1.4}
-\Delta p = \mu[\sigma(\xi(t-\tau;x,t),t-\tau)-\tilde{\sigma}].
\end{equation}
Assuming the velocity field is continuous up to the boundary, we obtain the normal velocity of the moving boundary, namely,
\begin{equation}\label{1.5}
V_n = -\nabla p\cdot n = -\frac{\partial p}{\partial n}\quad\text{on }\partial \Omega(t).
\end{equation}
In addition, assume $\sigma$ and $p$ satisfy the boundary conditions:
\begin{eqnarray}\label{1.6}
&& \sigma = 1 \quad \text{ on } \partial \Omega(t),\\
\label{1.7}
&& p = \kappa \quad \text{ on } \partial \Omega(t),
\end{eqnarray}
where $\kappa$ is the mean curvature. Equation (\ref{1.6}) represents nutrient supply at the boundary and equation (\ref{1.7}) represents cell-to-cell adhesiveness.

Finally, it remains to prescribe initial conditions. Instead of defining the initial conditions at time 0, for this time-delay problem,
we are required to supply  the initial conditions on an interval $[-\tau, 0]$. For simplicity
we assume initial data are time independent on the interval $[-\tau,0]$:
\begin{eqnarray}\label{1.8}
&& \Omega(t)  = \Omega_0 \quad -\tau\le t \le 0, \\
\label{1.9}
&& \sigma(x,t) = \sigma_0(x) \quad\text{ in }\Omega_0,\quad -\tau\le t \le 0.
\end{eqnarray}

Now the problem is reduced to mainly finding two unknown functions $\sigma$ and $p$, together with the unknown tumor region $\Omega(t)$:
\begin{eqnarray}
&&\lambda\sigma_t - \Delta\sigma + \sigma = 0,  \hspace{2em} x\in\Omega(t), \, t>0,\label{1.10}\\
&&-\Delta p = \mu[\sigma(\xi(t-\tau;x,t),t-\tau)-\tilde{\sigma}],\hspace{2em} x\in\Omega(t), \, t>0,\label{1.11}\\
&&\label{1.12}\left \{
\begin{array}{lr}
\displaystyle
\frac{\dif \xi}{\dif s} = -\nabla p(\xi,s), \hspace{2em} t-\tau\le s\le t,\\
\xi = x, \hspace{2em}\hspace{2em}\hspace{2em}\hspace{2em} s = t,\\
\end{array}
\right.
\\
&&\sigma = 1, \hspace{2em} x\in\partial\Omega(t), \, t>0,\label{1.13}\\
&&p=\kappa,\hspace{2em} x\in\partial\Omega(t), \, t>0,\label{1.14}\\
&&V_n = -\frac{\partial p}{\partial n}, \hspace{2em} x\in\partial\Omega(t), \, t>0,\label{1.15}\\
&&\Omega(t) = \Omega_0, \hspace{2em} -\tau\le t \le 0,\label{1.16}\\
&&\sigma(x,t) = \sigma_0(x), \hspace{2em} x\in\Omega_0, \, -\tau\le t \le 0.\label{1.17}
\end{eqnarray}

We shall reformulate the radially symmetric case in the next section.
The radially symmetric model with time delay was studied
in \cite{delay2, delay4, delay3, delay1, delay5}, it will be justified in the next section that the models in
these papers are first-order approximations of our model in radially symmetric case. In \cite{delay1}, Xu, Zhou, and Bai rigorously proved that the stationary solution is always stable with respect to all radially symmetric perturbations in the case $\lambda = 0$. In reality, however, we cannot ensure that perturbation is strictly radially symmetric, thus it is more natural and reasonable to ask the stability under non-radial perturbations. In this paper, we shall consider the linear stability of the unique radially symmetric stationary solution $(\sigma_*, p_*, R_*)$ with respect to non-radial perturbations in the quasi-steady state case $\lambda = 0$; the existence of such a solution is guaranteed:
\begin{thm}\label{thm1.1}
The system \re{1.10}--\re{1.15} admits a unique radially symmetric classical stationary solution $(\sigma_*, p_*, R_*)$.
\end{thm}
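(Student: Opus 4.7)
My plan is to reduce the stationary radial problem to a scalar equation in $R_*$ and then establish existence and uniqueness by a monotonicity argument. By radial symmetry and stationarity, the time derivatives drop out of \re{1.10}, the kinematic condition $V_n = 0$ in \re{1.15} forces $p_*'(R_*) = 0$, and the delay term in \re{1.11} collapses to $\sigma_*(\eta(r))$, where $\eta\colon [0,R_*]\to[0,R_*]$ is the radial backward flow of $-p_*'$ over time $\tau$ (which necessarily fixes both endpoints, since $p_*'(0) = 0$ by smoothness and $p_*'(R_*) = 0$ by stationarity).

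The $\sigma_*$ equation decouples and is solved explicitly: $\sigma_*'' + \tfrac{2}{r}\sigma_*' - \sigma_* = 0$ together with $\sigma_*(R_*)=1$ and boundedness at the origin gives
\[
\sigma_*(r) \;=\; \frac{R_*}{r}\cdot\frac{\sinh r}{\sinh R_*}.
\]
Integrating $-\Delta p_* = \mu[\sigma_*(\eta(r))-\tilde\sigma]$ over $B_{R_*}$ and using $p_*'(R_*)=0$ yields the scalar constraint
\[
F(R_*) \;:=\; \int_0^{R_*}\bigl[\sigma_*(\eta(r))-\tilde\sigma\bigr]\,r^2\,\dif r \;=\; 0.
\]
For each fixed candidate $R_*$, the coupled subsystem for $(p_*,\eta)$ would be handled by a contraction fixed-point iteration $\eta \mapsto p_* \mapsto \tilde\eta$, using the explicit radial Green representation for $-\Delta p_*$ followed by integration of the flow ODE \re{1.12}; the uniform bounds $0<\sigma_*\le 1$ provide the a priori control needed to pin down $(p_*,\eta)$ uniquely given $R_*$.

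It then remains to analyze $F$ as a function of $R_*$: $F(R_*)<0$ for $R_*$ small (the integrand is dominated by $-\tilde\sigma$), while $F(R_*)>0$ for $R_*$ large (since $\sigma_*(r)\to 1>\tilde\sigma$ near the boundary), and strict monotonicity would pin down a unique zero. Proving this monotonicity is the main obstacle. In contrast with the delay-free Greenspan--Friedman setting, one cannot simply differentiate the explicit $\sigma_*$ and integrate, because $\eta$ also varies with $R_*$ through its implicit dependence on $p_*$. I would attack this by combining sign control on $\eta(r)-r$ on the outward- and inward-flow subintervals (where $-p_*'>0$ and $-p_*'<0$, respectively) with an implicit-function differentiation of $(p_*,\eta)$ through the fixed-point formulation, verifying that the principal $\sigma_*$-derivative term dominates the $\tau$-perturbation introduced by $\eta$; an ancillary argument of this type should also recover the monotone dependence $R_* = R_*(\tau)$ advertised in the abstract.
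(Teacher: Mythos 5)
Your overall architecture matches the paper's: solve the nutrient equation explicitly, fix $R_*$ and determine $(p_*,\xi)$ by a contraction fixed point, then reduce everything to a scalar equation $F(R_*)=0$ and argue by a sign change plus monotonicity. However, there are concrete problems. First, the paper works in two space dimensions (stated explicitly; \re{2.9} is derived with the reminder that the dimension is $2$), so the explicit nutrient profile is $\sigma_*(r)=I_0(r)/I_0(R_*)$ and the integral constraint carries the weight $r\,\dif r$; your formula $\frac{R_*}{r}\frac{\sinh r}{\sinh R_*}$ and the weight $r^2\,\dif r$ belong to the 3D problem. Second, your sign analysis of $F$ is reversed: for small $R_*$ one has $\sigma_*\approx 1>\tilde\sigma$ throughout the tumor, so $F>0$; for large $R_*$ the nutrient is exponentially small outside a boundary layer and $F<0$. (In the paper, $F(R,0)=I_1(R)/(RI_0(R))-\tilde\sigma/2$, which decreases from $1/2-\tilde\sigma/2>0$ toward $-\tilde\sigma/2<0$.) The existence conclusion survives an intermediate-value argument either way, but the limits you assert are wrong as written.

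The more serious gap is that the monotonicity of $F$ --- which you correctly identify as the crux of uniqueness --- is left as a plan rather than proved. The paper does not attempt the implicit-function/flow-sign analysis you outline. Instead it exploits that at $\tau=0$ the delay disappears and $F(R,0)=P_0(R)-\tilde\sigma/2$ with $P_0(R)=I_1(R)/(RI_0(R))$ already known to be strictly decreasing with range $(0,\tfrac12]$ (citing \cite{Erdelyi} and \cite{FH3}); this yields a unique root $R_S$ of $F(\cdot,0)$, and then for $\tau$ sufficiently small both the bracketing $F(\tfrac12 R_S,\tau)>0>F(\tfrac32 R_S,\tau)$ and the negativity of $\partial F/\partial R$ are transferred by continuity and expansion in $\tau$. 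Note also that the result is only established for $\tau$ small: the contraction constant in the fixed-point step is $O(\tau)$ (the paper needs $2M_4\tau<1$), and the monotonicity transfer requires small $\tau$ as well; your write-up does not record this restriction. Without either the perturbation-from-$\tau=0$ device or a completed version of your direct monotonicity argument, uniqueness is not established.
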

Next assume the initial conditions are perturbed as follows:
\begin{eqnarray}
\partial \Omega(t): r = R_* + \epsilon\rho_0(\theta), && \sigma(r,\theta,t) = \sigma_*(r) + \epsilon w_0(r,\theta),\hspace{2em} -\tau\le t\le0.
\end{eqnarray}
Substituting
\begin{eqnarray}
     & &\partial \Omega(t): r = R_* + \epsilon \rho(\theta,t)+O(\epsilon^2),\nonumber\\
    &&\sigma(r,\theta,t) = \sigma_*(r) + \epsilon w(r,\theta,t)+O(\epsilon^2), \\
    &&p(r,\theta,t)= p_*(r) + \epsilon q(r,\theta,t)+O(\epsilon^2).\nonumber
\end{eqnarray}
into \re{1.10}--\re{1.15} and collecting the $\epsilon$-order terms, we obtain a linearized system around the unique radial solution $(\sigma_*, p_*, R_*)$, and establish the following results:

\begin{thm}\label{thm1.2}
There exists a critical value $\mu_*>0$ such that for any $\mu < \mu_*$, the radially symmetric stationary solution $(\sigma_*, p_*, R_*)$ is linearly stable in the sense
\begin{eqa}\label{1.20}
|\rho(\theta,t) - (a_1 \cos (\theta) + b_1 \sin(\theta))| \le C e^{-\delta t}, \hspace{2em} t>0,
\end{eqa}
for some constants $a_1$, $b_1$ and $\delta >0$. If $\mu > \mu_*$, this stationary solution is linearly unstable.
\end{thm}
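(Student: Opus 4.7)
The plan is to reduce linear stability to the analysis of a family of scalar delay equations, one for each Fourier mode of the boundary perturbation $\rho$. First I would substitute the asymptotic expansions for $\sigma$, $p$ and $\partial\Omega(t)$ into \re{1.10}--\re{1.15} and collect the $O(\epsilon)$ terms. The delicate point is handling \re{1.12}: the unperturbed Lagrangian trajectory $\bar\xi(s;x,t)$ already moves radially under the velocity field $-\nabla p_*$, and the delayed quantity $\sigma(\xi(t-\tau;x,t),t-\tau)$ must be expanded both in time (giving $w(\bar\xi(t-\tau),t-\tau)$) and in space (giving a $\nabla\sigma_*\cdot\eta$ correction, where $\eta$ is the first-order perturbation of the characteristic). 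Once this is done, one obtains a closed linear system for $(w,q,\rho)$.

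\textbf{Mode-by-mode reduction.} Using rotational symmetry I would expand $w(r,\theta,t)=\sum_n w_n(r,t)Y_n(\theta)$, similarly for $q$, and $\rho(\theta,t)=\sum_n\rho_n(t)Y_n(\theta)$. The elliptic equation for $w_n$ with the linearized Dirichlet datum on $r=R_*$ is solved explicitly by modified Bessel functions, giving $w_n(r,t)=c_n I_n(r)\rho_n(t)$. The Poisson equation for $q_n$, whose right-hand side involves both $\rho_n(t)$ (through the Lagrangian correction and the present-time nutrient gradient) and $\rho_n(t-\tau)$ (through the delayed nutrient evaluated along the base trajectory), is then solved by variation of parameters. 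Substituting into the linearization of \re{1.15} and noting that the normal velocity equals $\epsilon\dot\rho_n$ to first order yields a scalar delay differential equation
\[
\dot\rho_n(t)=A_n(\mu)\,\rho_n(t)+B_n(\mu)\,\rho_n(t-\tau),\qquad n\ge 0,
\]
whose coefficients are explicit in $R_*$, $I_n$, $I_n'$ and $\mu$.

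\textbf{Characteristic equation and the threshold $\mu_*$.} The ansatz $\rho_n(t)=e^{\Lambda t}$ produces the transcendental characteristic equation $\Lambda=A_n(\mu)+B_n(\mu)e^{-\Lambda\tau}$. The mode $n=0$ is handled by Theorem~\ref{thm1.1} and \cite{delay1}; the mode $n=1$ always admits the root $\Lambda=0$ coming from the translation invariance of the original free boundary problem, which accounts for the neutral direction $a_1\cos\theta+b_1\sin\theta$ in \re{1.20}. For $n\ge 2$, I would first locate the zero-root locus $\{\mu:A_n(\mu)+B_n(\mu)=0\}$, define $\mu_n$ as the smallest positive $\mu$ on this locus, and set $\mu_*:=\inf_{n\ge 2}\mu_n$. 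Monotonicity of an appropriate ratio of $I_n(R_*)$ and $I_n'(R_*)$ in $n$ is expected to show that the infimum is attained at $n=2$. Instability for $\mu>\mu_*$ then follows by exhibiting a positive real root of the characteristic equation for some $n\ge 2$, via continuity in $\mu$.

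\textbf{Main obstacle.} The hard direction is stability for $\mu<\mu_*$: one needs every root of $\Lambda=A_n(\mu)+B_n(\mu)e^{-\Lambda\tau}$ to satisfy $\mathrm{Re}\,\Lambda\le-\delta$ with $\delta>0$ uniform in $n\ge 2$. The condition $A_n+B_n<0$ alone is not sufficient when $A_n>0$, since conjugate pairs of roots can cross the imaginary axis at nonzero frequency. I would therefore need a Hayes-type criterion controlling $\tau|B_n|$ by an explicit function of $A_n,B_n$, together with asymptotic estimates $A_n,B_n=O(n)$ and sign information as $n\to\infty$ (from the large-order behavior of $I_n$), to secure both the existence of a uniform decay rate and the summability needed for Fourier reassembly, which then yields the pointwise estimate \re{1.20}.
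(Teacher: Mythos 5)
Your overall frame (linearize, separate into Fourier modes, solve the elliptic problems by modified Bessel functions, locate $\mu_*$ at the $n=2$ mode, and treat $n=0$ and $n=1$ as the radial and translation modes) matches the paper, but your central device --- keeping the delay intact and reducing each mode to a discrete-delay equation $\dot\rho_n(t)=A_n(\mu)\rho_n(t)+B_n(\mu)\rho_n(t-\tau)$ with transcendental characteristic equation $\Lambda=A_n+B_ne^{-\Lambda\tau}$ --- is not what the paper does, and as written it contains two genuine gaps. First, the reduction itself is inaccurate: the delay enters \re{1.12} through the Lagrangian trajectory, and at order $\epsilon$ the perturbed characteristic (the paper's $\varphi_n$ in \re{4.19}) is obtained by integrating $-\partial q_n/\partial r$ over the whole interval $[t-\tau,t]$; since $q_n(\cdot,s)$ at each intermediate time $s$ is in turn determined by $\rho_n(s)$ together with $w_n$ and $\varphi_n$ evaluated at time $s-\tau$, the linearized equation for $\rho_n$ is a distributed-delay functional differential equation with recursively nested memory, not a two-point delay equation, so the characteristic equation you write down is not the correct spectral problem. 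Second, even granting that reduction, the hard direction --- a bound $\mathrm{Re}\,\Lambda\le-\delta n^{3}$ uniform over all roots and all $n\ge2$ when $\mu<\mu_*$ (you need the $n^{3}$ gain, not merely a negative real part, for the Fourier reassembly leading to \re{1.20}) --- is precisely the step you defer to an unproved Hayes-type criterion. That is the substance of the theorem, so the proposal does not close.

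The paper sidesteps both difficulties by exploiting the smallness of $\tau$: every unknown is expanded in powers of $\tau$ ($\rho_n=\rho_n^0+\tau\rho_n^1+O(\tau^2)$, and likewise for $\sigma_*,p_*,w_n,q_n,R_*$), which collapses the delay at each order. The zeroth-order amplitude then satisfies a pure scalar ODE that integrates explicitly to \re{4.66}; Bessel-function inequalities give the threshold $\mu_*=\mu_2^0$ from \re{4.69} and the decay rate $-\delta n^{3}$ of Lemma \ref{thm4.5}. The first-order correction $\rho_n^1$ satisfies an ODE in \re{4.50} forced by already-controlled zeroth-order quantities, and is handled by the uniform elliptic estimate of Lemma \ref{thm4.6} together with the elementary Gronwall-type Lemma \ref{thm4.7}; the $n=1$ computation \re{4.134} shows the translation mode is exactly neutral, which produces the $a_1\cos\theta+b_1\sin\theta$ term in \re{1.20}. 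If you wish to pursue your route you must (i) derive the correct distributed-delay linearization including the $\varphi_n$ contribution and (ii) actually prove the uniform-in-$n$ spectral bound; the paper's $\tau$-expansion buys complete tractability at the cost of working only modulo $O(\tau^2)$.
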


\begin{rem}
The system \re{1.10}--\re{1.17} is invariant under coordinate translations, that is the reason why we exclude $a_1 \cos (\theta) + b_1 \sin(\theta)$ in \re{1.20}. \void{And we shall show the stationary solution $(\sigma_*, p_*, R_*)$ is linearly stable under translation of the origin when $\mu<\mu_*$.}
\end{rem}

\begin{rem}
In this paper, we consider only 2 space dimensional case; the 3 space dimensional case can be considered in a similar manner without any difficulties, but the computations will be much more involved.
\end{rem}

\begin{rem}
In contrast to the result in \cite{delay1}, where stability holds for all $\mu$ with respect to radially symmetric perturbations, our $\mu_*$ is finite, and instability stems from $n=2$ mode when $\mu > \mu_*$. Recall that $\mu$ represents the tumor aggressiveness, and it is
a biologically reasonable result that larger tumor aggressiveness induces instability.
\end{rem}

Here are a couple of interesting results of the impact of time delay on the size and stability of the stationary tumor:
\begin{thm}\label{thm1.3}
Adding the time delay would result in a larger stationary tumor as compared to the system without
delay. The biger the tumor proliferation intensity $\mu$ is, the greater impact that time delay has on the size of the stationary tumor.
\end{thm}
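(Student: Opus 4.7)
The first step is to distill the stationary problem into a single scalar identity. In the radially symmetric stationary case, $\sigma_*(r)=I_0(r)/I_0(R_*)$ solves $\Delta\sigma_*=\sigma_*$ with $\sigma_*|_{r=R_*}=1$, while $p_*$ solves $-\Delta p_* = \mu[\sigma_*(\eta_*(r))-\tilde\sigma]$ with $p_*(R_*)=1/R_*$ and $p_*'(R_*)=0$ (encoding the stationarity of the free boundary, $V_n=0$). Here $\eta_*(r)$ is the position at time $t-\tau$ of a cell currently at radius $r$, obtained by solving $\dif\eta/\dif s=-p_*'(\eta)$ backward from $\eta(t)=r$. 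Integrating the $p_*$ equation over $B_{R_*}$ and using $p_*'(R_*)=0$ reduces the system to
\[
\int_0^{R_*}\bigl[\sigma_*(\eta_*(r))-\tilde\sigma\bigr]\,r\,\dif r = 0. \qquad (\ast)
\]

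The crucial structural lemma is that $\eta_*(r)\ge r$ on $[0,R_*]$, with strict inequality on $[0,R_*)$ when $\tau>0$. Setting $F(r):=\int_0^r[\sigma_*(\eta_*(r'))-\tilde\sigma]\,r'\,\dif r'$, the $p_*$ equation yields $r p_*'(r)=-\mu F(r)$. Since $F(0)=F(R_*)=0$ by $(\ast)$ and $F'(r)=r[\sigma_*(\eta_*(r))-\tilde\sigma]$ changes sign exactly once (the monotone increasing map $\sigma_*\circ\eta_*$ crosses $\tilde\sigma$ at a single radius), $F$ has a single interior minimum and satisfies $F\le 0$. Hence $p_*'\ge 0$, so cells drift inward and the backward trajectory gives $\eta_*(r)\ge r$.

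Next I compare $R_*$ with the no-delay stationary radius $R_s$. Define
\[
g(R)\;:=\;\int_0^R \frac{I_0(r)}{I_0(R)}\,r\,\dif r\;-\;\frac{\tilde\sigma R^2}{2}\;=\;\frac{R\,I_1(R)}{I_0(R)}\;-\;\frac{\tilde\sigma R^2}{2},
\]
so that $g(R_s)=0$. Rewriting $(\ast)$ as $g(R_*)=-\int_0^{R_*}[\sigma_*(\eta_*(r))-\sigma_*(r)]\,r\,\dif r$, the right-hand side is strictly negative for $\tau>0$ by strict monotonicity of $\sigma_*$ together with $\eta_*(r)>r$; hence $g(R_*)<0$. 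Using the Bessel identity $(RI_1(R))'=RI_0(R)$ one checks that $g(R)/R^2=I_1(R)/[RI_0(R)]-\tilde\sigma/2$ is strictly decreasing from $(1-\tilde\sigma)/2>0$ at $R=0^+$ to $-\tilde\sigma/2<0$ at $R=\infty$; thus $g(R)<0\iff R>R_s$. Consequently $R_*>R_s$, establishing the first assertion.

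For the second assertion I expand in $\tau$: $\eta_*(r)=r+\tau p_*'(r)+O(\tau^2)$ and $p_*'(r)=-\mu F_0(r)/r+O(\tau)$, where $F_0$ is computed from the no-delay profile $\sigma_*^{(R_s)}$. Taylor-expanding $\sigma_*(\eta_*(r))$ in $(\ast)$ and applying the implicit function theorem at $R_s$ (where $g'(R_s)<0$ by the monotonicity above) gives
\[
R_*(\mu,\tau)-R_s\;=\;\frac{\mu\tau}{|g'(R_s)|}\int_0^{R_s}\sigma_*'(r)\bigl[-F_0(r)/r\bigr]\,r\,\dif r\;+\;O(\tau^2).
\]
The integral is strictly positive ($\sigma_*'>0$ and $-F_0\ge 0$), and the leading coefficient is \emph{linear in} $\mu$, so larger aggressiveness $\mu$ produces proportionally larger delay-induced enlargement. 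The principal technical obstacle I anticipate is rigorously justifying $F\le 0$ in the delayed setting, because $\eta_*$ is implicitly coupled to $p_*$; a continuity or bootstrap argument from the no-delay limit $\tau=0$ should secure the monotonicity of $\sigma_*\circ\eta_*$ needed for the U-shape argument.
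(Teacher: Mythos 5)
Your proposal is correct; for the first assertion it takes a genuinely different route from the paper, while for the second it is essentially the paper's argument in different packaging. The paper proves Theorem \ref{thm1.3} purely perturbatively: it expands $R_*=R_*^0+\tau R_*^1+O(\tau^2)$, extracts the first-order relation \re{4.39}, evaluates every integral in closed form with the Bessel identities, and arrives at $R_*^1=\mu B(R_*^0)/A(R_*^0)$, where $A(x)=2(-I_1^2(x)+I_0(x)I_2(x))$ and $B(x)=-2I_1(x)I_2(x)+xI_1^2(x)-xI_0(x)I_2(x)$ are both negative by \re{4.26}--\re{4.27}; hence $R_*^1>0$ and is linear in $\mu$, which is exactly the statement of the theorem to first order in $\tau$. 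Your comparison argument is non-perturbative: it gives $R_*>R_*^0$ (your $R_s$) for the actual delayed stationary solution, for every admissible $\tau$, not merely positivity of the first $\tau$-coefficient --- a strictly stronger form of the first claim. For the $\mu$-dependence you must still expand in $\tau$, and there your implicit-function-theorem formula coincides with the paper's: one checks $g'(R_*^0)=\frac{R_*^0}{2I_0^2(R_*^0)}A(R_*^0)$ and $\int_0^{R_*^0}\sigma_*^{0\prime}(r)\bigl(-F_0(r)\bigr)\,\dif r=-\frac{R_*^0}{2I_0^2(R_*^0)}B(R_*^0)$, so your expression reduces to $R_*^1=\mu B/A$. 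One remark: the ``principal technical obstacle'' you anticipate is not actually an obstacle and needs no bootstrap from $\tau=0$. Monotonicity of $r\mapsto\eta_*(r)$ comes for free from the variational equation $\partial\xi/\partial r=\exp\bigl\{\int_t^s\bigl(-\partial^2p_*/\partial r^2\bigr)\bigr\}>0$ (step 1 of Section 2), independently of any sign information on $p_*'$; combined with $\sigma_*'>0$ this makes $\sigma_*\circ\eta_*$ increasing, so the U-shape argument yields $F\le 0$ directly from $(\ast)$, whence $p_*'=-\mu F/r\ge 0$ and then $\eta_*(r)\ge r$. The logical order is monotonicity in $r$ $\Rightarrow F\le 0\Rightarrow p_*'\ge 0\Rightarrow\eta_*\ge r$, with no circularity. (A cosmetic slip: $\eta_*(0)=0$ and $\eta_*(R_*)=R_*$, so strict inequality holds on $(0,R_*)$ rather than $[0,R_*)$; this does not affect the strict negativity of $g(R_*)$.)
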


\begin{thm}\label{thm1.4}
Adding the time delay to the system would not alter the critical value $\mu_*$ for which the stability of the stationary solution changes.
\end{thm}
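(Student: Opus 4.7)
My plan is to exploit the fact that, at the critical bifurcation, the growth rate $\omega$ of the marginal Fourier mode vanishes, which forces the delay exponential $e^{-\omega\tau}$ to equal $1$ and thereby eliminates the explicit $\tau$--dependence from the equation defining $\mu_*$.

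First I would carry out the linearization of Theorem \ref{thm1.2} in enough detail to extract the characteristic equation for each azimuthal mode $n$. Writing the perturbations as $w=e^{\omega t}W(r)Y_n(\theta)$, $q=e^{\omega t}Q(r)Y_n(\theta)$, and $\rho=A\,e^{\omega t}Y_n(\theta)$ with $Y_n\in\{\cos n\theta,\sin n\theta\}$, and linearizing the characteristic system \re{1.12} around the stationary radial trajectory to obtain $\xi_1(s;x,t)=e^{\omega t}\Xi(s-t;r)Y_n(\theta)$, the delayed source on the right-hand side of \re{1.11} contributes at first order in $\epsilon$ the combination $w(\xi^*(-\tau),t-\tau)+\nabla\sigma_*(\xi^*(-\tau))\cdot\xi_1(-\tau;x,t)$, each term of which carries the factor $e^{\omega(t-\tau)}=e^{\omega t}\,e^{-\omega\tau}$. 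Imposing the boundary conditions \re{1.13}--\re{1.15} then reduces the coupled system to a single scalar transcendental equation for every mode $n$ of the form
\begin{equation}\label{T14disp}
F_n(\omega;R_*)\;=\;\mu\,G_n(\omega;R_*)\,e^{-\omega\tau},
\end{equation}
where $F_n$ and $G_n$ are built from modified Bessel functions and are precisely the objects appearing in the classical no-delay Friedman--Hu analysis.

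The critical value $\mu_*$ identified in Theorem \ref{thm1.2} corresponds to a root $\omega=0$ of \re{T14disp} at the critical mode $n=2$ (cf. the Remark following Theorem \ref{thm1.2}). Setting $\omega=0$ makes $e^{-\omega\tau}=1$ regardless of $\tau$, so that \re{T14disp} collapses to $F_n(0;R_*)=\mu_*G_n(0;R_*)$, which is identically the bifurcation equation of the undelayed problem. Thus $\mu_*$ is determined by exactly the same equation as in the no-delay analysis and is therefore unaltered by the introduction of the delay.

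The principal obstacle is to guarantee that $\omega=0$ remains the first crossing of the imaginary axis as $\mu$ is increased: transcendental equations of the form \re{T14disp} with an exponential delay factor $e^{-\omega\tau}$ are notorious for admitting Hopf-type crossings at purely imaginary $\omega=i\beta\neq 0$, which could in principle cross the imaginary axis at a smaller value of $\mu$ and invalidate the argument. I would rule this out by analyzing separately the real and imaginary parts of \re{T14disp} along $\omega=i\beta$, using the monotonicity of $F_n,G_n$ at $\omega=0$ established in the $\tau=0$ case together with a continuity argument in $\tau$ starting from $\tau=0$. Excluding such spurious crossings will then show that the transition to instability still occurs through $\omega=0$, and hence that the critical $\mu_*$ coincides with its no-delay counterpart, completing the proof.
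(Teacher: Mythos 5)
Your route (seek modal solutions $e^{\omega t}$, derive a transcendental characteristic equation, and observe that the delay factor $e^{-\omega\tau}$ equals $1$ at the critical crossing $\omega=0$) is genuinely different from the paper's, which never forms a characteristic equation: the paper expands every quantity in powers of $\tau$, shows that the zeroth-order mode amplitudes $\rho_n^0$ reproduce the no-delay threshold $\mu_*=\mu_2^0$, and then verifies by explicit computation (culminating in \re{4.111} and Lemma \ref{thm4.7}) that the first-order corrections $\rho_n^1$ obey an ODE whose homogeneous coefficient is \emph{identical} to the zeroth-order one, so the threshold is unmoved at order $\tau$. Unfortunately your central step has a genuine gap. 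The $\tau$-dependence of the linearized problem does \emph{not} enter only through a factor $e^{-\omega\tau}$. The delayed source in \re{1.11} is $\sigma(\xi(t-\tau;x,t),t-\tau)$, which combines a time shift with a \emph{spatial} displacement along the velocity field; linearizing it produces, besides $w(\cdot,t-\tau)$, the trajectory terms $\frac{\partial\sigma_*}{\partial r}(\xi_{10})\,\xi_{11}(t-\tau;\cdot)$ and the displacement $\xi_{10}(t-\tau;r,t)-r$ (see \re{4.11}, \re{4.42}--\re{4.44}). Both are $O(\tau)$ and survive at $\omega=0$: for instance $\varphi_n(t-\tau;r,t)=\tau\,\partial_r q_n^0+O(\tau^2)$, which carries no exponential prefactor, and the analogue of your $\frac{1-e^{-\omega\tau}}{\omega}$ tends to $\tau$, not $0$, as $\omega\to0$. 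Moreover the stationary solution about which you linearize itself depends on $\tau$ — the paper proves $R_*=R_*^0+\tau R_*^1+O(\tau^2)$ with $R_*^1>0$ (Theorem \ref{thmthmthm4.1}) — so even the Bessel-function coefficients $F_n(0;R_*)$, $G_n(0;R_*)$ in your dispersion relation shift with $\tau$. Hence your equation at $\omega=0$ does \emph{not} collapse to the undelayed bifurcation equation; showing that all of these $O(\tau)$ contributions cancel in the determination of the critical $\mu$ is precisely the content of the theorem, and it is exactly what the paper's lengthy computation of $\frac{\partial^2 p_*^1}{\partial r^2}$, $q_n^1$, and the coefficient structure of \re{4.111} accomplishes.

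A secondary, smaller issue: your exclusion of Hopf-type crossings at $\omega=i\beta\neq0$ is only a plan (separate real and imaginary parts, continuity in $\tau$), not an argument; in the paper this difficulty never arises because the $\tau$-expansion reduces each order to a scalar first-order ODE with real coefficients, whose sign is read off directly. If you want to salvage your approach, you would need to (i) write the characteristic relation honestly, including the $\xi_{11}$-integral and the displaced evaluation point, (ii) expand it at $\omega=0$ to first order in $\tau$ and carry out essentially the same Bessel-identity cancellations the paper performs, and (iii) supply the Hopf exclusion — at which point you will have reconstructed the paper's proof in different notation.
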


The outline of this paper is as follows. In Section 2, we reformulate the radially symmetric case. In Section 3, we establish the existence and uniqueness of the radially symmetric stationary solution $(\sigma_*, p_*, R_*)$. In Section 4, we introduce the linearization of the system at $(\sigma_*, p_*, R_*)$ and carry out the details
of our lengthy proofs of Theorems \ref{thm1.2},  \ref{thm1.3} and \ref{thm1.4}.


\section{Radially Symmetric Case}
In radially symmetric case, the  system \re{1.10}--\re{1.17} becomes
\begin{eqnarray}
&&\lambda\sigma_t(r,t) - \Delta\sigma(r,t) + \sigma(r,t) = 0,\hspace{2em} \text{in }B_{R(t)}, \; t>0,\label{2.1}\\
&&-\Delta p(r,t) = \mu [\sigma(\xi(t-\tau;r,t),t-\tau)-\tilde{\sigma}],\hspace{2em} \text{in }B_{R(t)}, \; t>0,\label{2.2}\\
&&\label{2.3} \left \{
\begin{array}{lr}
\displaystyle
\frac{\dif \xi}{\dif s} = -\frac{\partial p}{\partial r}(\xi,s), \hspace{2em} t-\tau\le s\le t,\\
\xi = r, \hspace{2em}\hspace{2em}\hspace{2em}\hspace{2em} s = t,\\
\end{array}
\right.
\\
&&\frac{\partial \sigma}{\partial r}(0,t) = 0,\hspace{2em} \sigma(R(t),t) = 1,\label{2.4}\\
&&\frac{\partial p}{\partial r}(0,t) = 0,\hspace{2em} p(R(t),t) = \frac{1}{R(t)},\label{2.5}\\
&&\frac{\dif R}{\dif t}   = -\frac{\partial p}{\partial r}(R(t),t),\label{2.6}\\
&&R(t)=R_0,\hspace{2em} -\tau\le t\le 0,\label{2.7}\\
&&\sigma(r,t)=\sigma_0(r),\hspace{2em}0\le r\le R_0,\; -\tau\le t\le 0,\label{2.8}
\end{eqnarray}
where $B_{R(t)}$ denotes the disk centered at 0 with radius $R(t)$. By integrating \re{2.2} over $B_{R(t)}$ and using \re{2.6}, we obtain
(recall that the space dimension is $2$)
\begin{equation}\label{2.9}
\begin{split}
R'(t) &= \frac{\mu}{|\partial B_{R(t)}|}\int_{B_{R(t)}}\Big[\sigma(\xi(t-\tau;r,t),t-\tau)-\tilde{\sigma}\Big]\dif V\\
&= \frac{\mu}{R(t)}\Big [\int_0^{R(t)} \sigma(\xi(t-\tau;r,t),t-\tau) r \dif r - \int_0^{R(t)} \tilde{\sigma} r \dif r\Big ].
\end{split}
\end{equation}

We shall make a substitution $r'=\xi(t-\tau,r,t)$ in the above integration, and derive some properties of $\xi(s,r,t)$ that will be needed in the substitution.
Taking another derivative with respect to $r$ on both sides of (2.3), we have
\begin{equation*}
\left \{
\begin{split}
&\frac{\dif}{\dif s}\Big(\frac{\partial \xi}{\partial r}\Big) = -\frac{\partial^2 p}{\partial r^2}(\xi,s)\frac{\partial \xi}{\partial r},\quad t-\tau\le s\le t,\\
&\frac{\partial \xi}{\partial r}\Big |_{s=t} = 1,
\end{split}
\right.
\end{equation*}
from which we find that $r' = \xi(t-\tau;r,t)$ satisfies
$$\dif r = \dif r' \exp\Big \{\int_{t-\tau}^t - \frac{\partial^2 p}{\partial r^2}(\xi(s;r,t),s)\dif s\Big\} = (1+O(\tau))\dif r'.$$
Furthermore, the domain of integration $r\in(0,R(t))$ becomes $r'\in(0,R(t-\tau))$ after changing variable.
This is justified in three steps as follows.
\begin{itemize}
\item[1.] From $\frac{\partial \xi}{\partial r}=\exp\big\{\int_t^s -\frac{\partial^2 p}{\partial r^2}(\xi,c)\dif c\big\}>0$, we find that $r'=\xi(t-\tau;r,t)$ is a monotone increasing function of $r$.
\item[2.] If $r=0$, then $r'=\xi(t-\tau;0,t)=0$. As a matter of fact, $\xi \equiv 0$ is the unique solution to the ODE (here it is clear that $\frac{\partial p}{\partial r}(0,s)= 0$)
\begin{equation*}
\left \{
\begin{split}
&\frac{\partial \xi}{\partial s} = -\frac{\partial p}{\partial r}(\xi,s),\\
&\xi\big|_{s=t} = 0.
\end{split}
\right.
\end{equation*}
\item[3.] Now we claim $\xi(t-\tau;R(t),t)=R(t-\tau)$.
Indeed, both $\xi(s,R(t),t)$ and $R(s)$ satisfy the same ODE
\begin{equation*}
\left \{
\begin{split}
&\frac{\partial \xi}{\partial s} = -\frac{\partial p}{\partial r}(\xi,s),\\
&\xi\big|_{s=t} = R(t).
\end{split}
\right.
\end{equation*}
\void{
and the free boundary $R(t)$ satisfies
\begin{equation*}\tag{**}
\left \{
\begin{split}
&\frac{\dif R(s)}{\dif t} = -\frac{\partial p}{\partial r}(R(s),s),\\
&R\big|_{s=t} = R(t).
\end{split}
\right.
\end{equation*}
}
By the uniqueness of the ODE solution, we derive $\xi(s;R(t),t)=R(s)$. Letting $s=t-\tau$, we conclude $\xi(t-\tau;R(t),t)=R(t-\tau)$.
\end{itemize}
Thus we conclude that the domain of integration under the change of variable
$r'=\xi(t-\tau,r,t)$   becomes $(0,R(t-\tau))$.
Integrating \re{2.3} over the interval $[t-\tau,t]$, we get
$$r = r' + \int_{t-\tau}^t -\frac{\partial p(\xi(c;r,t),c)}{\partial r} \dif c = r' + O(\tau),$$
and substitute it into \re{2.9} to obtain
\begin{equation}\label{2.10}
\begin{split}
R'(t)&=\frac{\mu}{R(t)}\Big
 [\int_0^{R(t-\tau)} \sigma(r',t-\tau)(r'+O(\tau))(1+O(\tau))\dif r' - \int_0^{R(t)} \tilde{\sigma} r \dif r \Big ]\\
&=\frac{\mu}{R(t)}\Big [\int_0^{R(t-\tau)} \sigma(r,t-\tau)r\dif r - \int_0^{R(t)} \tilde{\sigma} r\dif r + O(\tau) \Big].
\end{split}
\end{equation}

 The time delay $\tau$ in our model is actually very small; it is therefore reasonable
 to drop the $O(\tau)$ terms from \re{2.10} in radially symmetric case, with this approximation  our model
  coincides with those in \cite{delay2, delay4, delay3, delay1, delay5}.

\section{Radially Symmetric Stationary Solution}
In this section we prove the existence and uniqueness of the radially symmetric stationary solution $(\sigma_*(r),p_*(r),$ $ R_*)$ to the system \re{1.10}--\re{1.15}. After setting all $t$-derivative terms to be $0$, a stationary solution $(\sigma_*(r),p_*(r),R_*)$ satisfies
\begin{eqnarray}
&&-\Delta \sigma_*(r) + \sigma_*(r) = 0, \hspace{2em} r< R_*,\label{3.1}\\
&&-\Delta p_*(r) = \mu[\sigma_*(\xi(-\tau;r,0))-\tilde{\sigma}], \hspace{2em} r< R_*,\label{3.2}\\
&&\label{3.3}\left \{
\begin{array}{lr}
\displaystyle
\frac{\dif \xi}{\dif s}(s;r,0) = -\frac{\partial p_*}{\partial r}(\xi(s;r,0)), \hspace{2em} -\tau\le s\le 0,\\
\xi(s;r,0) = r, \hspace{2em}\hspace{2em}\hspace{2em}\hspace{2em} s = 0,\\
\end{array}
\right.
\\
&&\sigma_* = 1,\; p_* = \frac{1}{R_*},\hspace{2em} r=R_*,\label{3.4}\\
&&\int_0^{R_*} \Big[\sigma_*(\xi(-\tau;r,0)) - \tilde{\sigma}\Big]r\dif r = 0.\label{3.5}
\end{eqnarray}

\begin{lem}
For sufficiently small $\tau$, there exists a unique classical solution $(\sigma_*, p_*, R_*)$ to the problem \re{3.1}-\re{3.5}.
\end{lem}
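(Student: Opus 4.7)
The plan is to reduce the coupled stationary system to a single scalar equation for $R_*$ that depends smoothly on $\tau$, and then apply the implicit function theorem at the explicitly computable $\tau=0$ solution. \textbf{Step 1.} The elliptic equation (3.1), together with boundedness at $r=0$ and $\sigma_*(R_*)=1$, has the explicit radial solution $\sigma_*(r) = I_0(r)/I_0(R_*)$, so $\sigma_*$ is uniquely determined by $R_*$. \textbf{Step 2.} For fixed $R_*>0$, I would solve the coupled $(p_*,\xi)$ problem (3.2)--(3.3) with boundary data $p_*'(0)=0$, $p_*(R_*)=1/R_*$ by a Picard-type iteration: starting from $p_*^{(0)}$ (the $\tau=0$ pressure is a convenient choice), compute the characteristic $\xi^{(k)}$ from (3.3) using $p_*^{(k)}$, then define $p_*^{(k+1)}$ by solving (3.2) with source $\mu[\sigma_*(\xi^{(k)}(-\tau;\cdot,0))-\tilde\sigma]$, and iterate. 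Since $\xi(-\tau;r,0)=r+O(\tau)$ and the linearization of the update map at the $\tau=0$ fixed point is of order $O(\tau)$ in a $C^2([0,R_*])$-type norm, the iteration contracts for $\tau$ sufficiently small and produces a unique classical pair $(p_*,\xi)$ depending smoothly on $(R_*,\tau)$.

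\textbf{Step 3.} Substituting this pair into (3.5) defines
\[
F(R_*,\tau) \;=\; \int_0^{R_*} \bigl[\sigma_*\bigl(\xi(-\tau;r,0)\bigr) - \tilde\sigma\bigr]\, r\, dr,
\]
which by Step 2 is $C^1$ in both arguments, and the full stationary system is equivalent to $F(R_*,\tau)=0$. \textbf{Step 4.} At $\tau=0$ we have $\xi(0;r,0)=r$, so using the identity $(rI_1(r))' = rI_0(r)$,
\[
F(R,0) \;=\; \int_0^R\Bigl[\tfrac{I_0(r)}{I_0(R)}-\tilde\sigma\Bigr]r\,dr \;=\; \frac{R\,I_1(R)}{I_0(R)} - \frac{\tilde\sigma R^2}{2}.
\]
Since $I_1(R)/(R\,I_0(R))$ decreases strictly from $1/2$ at $R=0$ to $0$ as $R\to\infty$, for the physically relevant range $0<\tilde\sigma<1$ the equation $F(R,0)=0$ has a unique root $R_s>0$ and $\partial_R F(R_s,0)\neq 0$. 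The implicit function theorem then yields a unique $R_*=R_*(\tau)$ near $R_s$ for all sufficiently small $\tau$, completing the existence and uniqueness argument.

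The main obstacle lies in Step 2: the coupling between $p_*$ and $\xi$ is implicit, because $\xi$ is transported by $-\partial_r p_*$ while $p_*$ is forced by $\sigma_*\circ\xi$. To make the iteration contractive one must simultaneously control $p_*$ in $C^2$ (so that $\partial_r^2 p_*$, which appears in the equation for $\partial_r\xi$, is bounded) and the flow $\xi$ in $C^1$; choosing the norms so that both the ODE flow and the elliptic solver are Lipschitz with constant $O(\tau)$ is the only delicate point. Once this smooth parametric dependence is in hand, Steps 3--4 are essentially bookkeeping, with the sole nontrivial ingredient being the classical monotonicity of $I_1(R)/(R\,I_0(R))$ used to isolate the no-delay radius $R_s$.
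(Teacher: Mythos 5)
Your proposal follows essentially the same route as the paper's proof: the explicit Bessel-function formula for $\sigma_*$, a contraction (Picard) argument with Lipschitz constant $O(\tau)$ in a $W^{2,\infty}$-type norm for the coupled $(p_*,\xi)$ problem at fixed $R_*$, and reduction to the scalar equation $F(R,\tau)=0$ solved at $\tau=0$ via the monotonicity of $I_1(R)/(RI_0(R))$ and then perturbed for small $\tau$. The only cosmetic differences are that the paper resolves $F(R,\tau)=0$ by preserving monotonicity and the sign change on $[\tfrac12 R_S,\tfrac32 R_S]$ rather than by the implicit function theorem, and it handles the technical point that the characteristic $\xi$ may exit the domain by linearly extending $p$ to a larger interval --- a detail your iteration scheme would also need to address.
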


\begin{proof}
To begin with, we introduce a change of variables
\begin{equation*}
\hat{r} = \frac{r}{R_*},\hspace{2em} \hat{\sigma}(\hat{r})=\sigma(r),\hspace{2em} \hat{p}(\hat{r})=R_*p(r), \hspace{2em} \hat{\xi}(s;\hat{r},0) = \frac{\xi(s;r,0)}{R_*}.
\end{equation*}
Solving the ODE \re{3.3} and substituting in \re{3.2} and \re{3.5}, we obtain a new system in the fixed domain $\{\hat{r}<1\}$. After dropping the $\;\hat{ }\;$ in the above variables, the new PDE system takes the following form:
\begin{eqnarray}
&&\label{3.6}\left \{
\begin{array}{lr}
\displaystyle
\frac{\dif \xi}{\dif s}(s;r,0) = -\frac{1}{R_*^3}\frac{\partial p}{\partial r}(\xi(s;r,0)), \hspace{2em} -\tau\le s\le 0,\\
\xi(s;r,0) = r, \hspace{2em}\hspace{2em}\hspace{2em}\hspace{2em} s = 0,\\
\end{array}
\right.
\\
&&-\Delta_r \sigma + R_*^2 \sigma = 0, \hspace{1em} \sigma(1) = 1,\label{3.7}\\
&&-\Delta_r p = \mu R_*^3 \Big[\sigma\Big(r+\frac{1}{R_*^3}\int_{-\tau}^0 \frac{\partial p}{\partial r}(\xi(s;r,0))\dif s \Big) - \tilde{\sigma}\Big], \hspace{1em} p(1) = 1,\label{3.8}\\
&&\int_0^1 \Big[\sigma\Big(r+\frac{1}{R_*^3}\int_{-\tau}^0 \frac{\partial p}{\partial r}(\xi(s;r,0))\dif s\Big)-\tilde{\sigma}\Big]r\dif r = 0.\label{3.9}
\end{eqnarray}

We first solve \re{3.7} explicitly as
\begin{equation}\label{3.10}
\sigma(r;R_*) = \frac{I_0(R_* r)}{I_0(R_*)}.
\end{equation}

Take $R_{\min}$ and $R_{\max}$ to be determined later. For any $R_{\min} \le R_* \le R_{\max}$, it is clear that $\sigma$ is uniquely determined by \re{3.10}. Substituting \re{3.10} into \re{3.8}, we shall prove that $p$ is also uniquely determined when $R_*$ is bounded by using contraction mapping principle.

{\red Note that $\xi(s;r,0)$ ($-\tau\le s \le 0$) might be out of the region $[0,1]$ by following the ODE \re{3.6} even if $0\le \xi(0,s,0)=r<1$ (i.e., started within the unit disk); however $\xi(s;r,0)$ should be very close to $r$ if $\tau$ is small enough. It is natural to assume $\xi(s;r,0)$ locates within $[0,2]$, so we take
$$\mathcal{P} = \{ p\in W^{2,\infty}[0,2]; \|p\|_{W^{2,\infty}[0,2]} \le M\}.$$
}For each $p\in \mathcal{P}$, we first solve $\xi$ from \re{3.6}, and substitute it into \re{3.8}, hence we shall obtain a unique solution $\bar{p}\in W^{2,\infty}[0,1]$ from the following system:
\begin{equation}
    \label{3.11}
    -\Delta_r \bar{p} = \mu R_*^3 \Big[\sigma\Big(r+\frac{1}{R_*^3}\int_{-\tau}^0 \frac{\partial p}{\partial r}(\xi(s;r,0))\dif s; R_* \Big) - \tilde{\sigma}\Big], \hspace{1em} \bar{p}(1) = 1.
\end{equation}
It follows from integrating \re{3.11} that
\begin{equation}
    \label{estimate1}
    \Big\|\frac1r\;\frac{\partial \bar{p}}{\partial r}\Big\|_{L^\infty[0,1]} \le \frac{\mu}{2} (R_{\max})^3 (\sigma_{\max}+\tilde{\sigma}),
\mm \|\bar{p}\|_{L^\infty[0,1]} \le 1 + \frac{\mu}{4}(R_{\max})^3 (
\sigma_{\max}+\tilde{\sigma}),
\end{equation}
\begin{equation}
    \label{estimate2}
    \Big \| \frac{\partial^2 \bar{p}}{\partial r^2} \Big \|_{L^\infty[0,1]} \le \frac{3\mu}{2}(R_{\max})^3 (\sigma_{\max}+\tilde{\sigma}),
\end{equation}
{\red where $\sigma_{\max} = \max\limits_{0\le r \le 2}\sigma(r;R_*)$. Note that $\bar{p}$ derived from \re{3.11} is only defined for $r\le 1$, we shall extend $\bar{p}$ to a bigger region. Define an extension of $\bar{p}$ as
\begin{equation}
    \label{extend}
    \tilde{p}(r) =
    \left\{
    \begin{split}
        &\bar{p}(r),\hspace{2em} &r\le 1,\\
        &\bar{p}(1) + \bar{p}'(1)(r-1),\hspace{2em} &1<r\le 2.
    \end{split}
    \right.
\end{equation}
It is easy to check $\|\tilde{p}\| \in W^{2,\infty}[0,2]$, and $\|\tilde{p}\|_{W^{2,\infty}[0,2]} \le 2 \|\bar{p}\|_{W^{2,\infty}[0,1]}$. Combining with \re{estimate1} and \re{estimate2}, we have
\begin{equation}
\label{estimate3}
    \|\tilde{p}\|_{W^{2,\infty}[0,2]} \le 2 \max\Big\{\frac{3\mu}{2} (R_{\max})^3 (\sigma_{\max}+\tilde{\sigma}), 1 + \frac{\mu}{4}(R_{\max})^3 (\sigma_{\max}+\tilde{\sigma})\Big\} \triangleq M_1
\end{equation}
}Define the mapping $\mathcal{L}: p \rightarrow \tilde{p}$. If we choose $M \ge M_1$, then by \re{estimate3}, $\tilde{p}\in \mathcal{P}$. Thus $\mathcal{L}$ maps $\mathcal{P}$ to itself. In the next step, we shall prove that $\mathcal{L}$ is a contraction.

Let $p_1, p_2 \in \mathcal{P}$, we solve $\xi_1, \xi_2$ from the following two systems:
\begin{eqnarray}
&&\label{3.12}\left \{
\begin{array}{lr}
\displaystyle
\frac{\dif \xi_1}{\dif s}(s;r,0) = -\frac{1}{R_*^3}\frac{\partial p_1}{\partial r}(\xi_1(s;r,0)), \hspace{2em} -\tau\le s\le 0,\\
\xi_1(s;r,0) = r, \hspace{2em}\hspace{2em}\hspace{2em}\hspace{2em} s = 0,\\
\end{array}
\right.
\end{eqnarray}
\begin{eqnarray}
&&\label{3.13}\left \{
\begin{array}{lr}
\displaystyle
\frac{\dif \xi_2}{\dif s}(s;r,0) = -\frac{1}{R_*^3}\frac{\partial p_2}{\partial r}(\xi_2(s;r,0)), \hspace{2em} -\tau\le s\le 0,\\
\xi_2(s;r,0) = r, \hspace{2em}\hspace{2em}\hspace{2em}\hspace{2em} s = 0.\\
\end{array}
\right.
\end{eqnarray}
Integrating \re{3.12} and \re{3.13} in $s$ and making a subtraction, we obtain the following estimate
\begin{equation*}
    \begin{split}
        |\xi_1-\xi_2| &\le \tau \frac{1}{R_*^3} \Big[\Big|\frac{\partial p_1}{\partial r}(\xi_1)-\frac{\partial p_2}{\partial r}(\xi_1)\Big| + \Big|\frac{\partial p_2}{\partial r}(\xi_1)-\frac{\partial p_2}{\partial r}(\xi_2)\Big|\Big]\\
        &\le \tau \frac{1}{R_*^3}\Big[ \|p_1 - p_2\|_{W^{2,\infty}[0,2]} + \|p_2\|_{W^{2,\infty}[0,2]}
         \max_{\substack{-\tau\le s\le 0\\0\le r\le 1}}|\xi_1 - \xi_2|\Big]\\
        &\le \tau \frac{1}{R_*^3}\|p_1 - p_2\|_{W^{2,\infty}[0,2]} + \tau \frac{1}{R_*^3}M\max_{\substack{-\tau\le s\le 0\\0\le r\le 1}}|\xi_1 - \xi_2|
    \end{split}
\end{equation*}
for all $-\tau \le s \le 0$ and $0\le r \le  1$,
hence
\begin{equation}
    \label{3.14}
    \max_{\substack{-\tau\le s\le 0\\0\le r\le 1}}|\xi_1 - \xi_2| \le \frac{\tau}{R_*^3 - \tau M}\|p_1 - p_2\|_{W^{2,\infty}[0,2]}.
\end{equation}
We then substitute $\xi_1, \xi_2$ into \re{3.11} and solve for $\bar{p}_1$ and $\bar{p}_2$, respectively. From \re{3.11}, $\bar{p}_1 - \bar{p}_2$ satisfies $(\bar{p}_1 - \bar{p}_2)(1) = 0$ and
$$-\Delta_r (\bar{p}_1 - \bar{p}_2) = \mu R_*^3\Big[\sigma\Big(r+\frac{1}{R_*^3}\int_{-\tau}^0 \frac{\partial p_1}{\partial r}(\xi_1(s;r,0))\dif s; R_* \Big) - \sigma\Big(r+\frac{1}{R_*^3}\int_{-\tau}^0 \frac{\partial p_2}{\partial r}(\xi_2(s;r,0))\dif s; R_* \Big)\Big]. $$
Using \re{3.14}, we have
\begin{equation*}
    \begin{split}
\Big\|\frac1r\frac{\partial (\bar{p}_1-\bar{p}_2)}{\partial r}\Big\|_{L^\infty[0,1]}  \le& \; \frac{\mu}{2}R_*^3 \Big\|\sigma\Big(r+\frac{1}{R_*^3}\int_{-\tau}^0 \frac{\partial p_1}{\partial r}(\xi_1(s))\dif s \Big)
 - \sigma\Big(r+\frac{1}{R_*^3}\int_{-\tau}^0 \frac{\partial p_2}{\partial r}(\xi_2(s))\dif s \Big)\Big\|_{L^\infty[0,1]}\\
\le& \;\frac{\mu}{2}R_*^3 \Big[ \Big\|\frac{\partial \sigma}{\partial r}\Big\|_{L^\infty[0,2]} \frac{1}{R_*^3}\int_{-\tau}^0 \Big(\frac{\partial p_1}{\partial r}(\xi_1(s)) - \frac{\partial p_2}{\partial r}(\xi_2(s))\Big)\dif s \Big]\\
\le& \;\frac{\mu}{2}\Big\|\frac{\partial \sigma}{\partial r}\Big\|_{L^\infty[0,2]}\tau \Big[\|p_1-p_2\|_{W^{2,\infty}[0,2]} + \|p_2\|_{W^{2,\infty}[0,2]} \max_{\substack{-\tau\le s\le 0\\0\le r<1}}|\xi_1 - \xi_2|\Big]\\
\le& \;M_2 \tau \|p_1-p_2\|_{W^{2,\infty}[0,2]},
    \end{split}
\end{equation*}
and similarly,
\begin{eqnarray*}
||\bar{p}_1-\bar{p}_2||_{L^\infty[0,1]} \le  M_3 \tau \|p_1-p_2\|_{W^{2,\infty}[0,2]},
\end{eqnarray*}
\begin{eqnarray*}
\Big\|\frac{\partial^2 (\bar{p}_1-\bar{p}_2)}{\partial r^2}\Big\|_{L^\infty[0,1]} \le  M_4 \tau \|p_1-p_2\|_{W^{2,\infty}[0,2]},
\end{eqnarray*}
where $M_2 = \frac{\mu}{2}\|\frac{\partial \sigma}{\partial r}\|_{L^\infty[0,2]}\Big(1+\frac{M\tau}{(R_{\min})^3-M\tau}\Big)$, $M_3 = \frac{\mu}{4}\|\frac{\partial \sigma}{\partial r}\|_{L^\infty[0,2]}\Big(1+\frac{M\tau}{(R_{\min})^3-M\tau}\Big)$ and $M_4 =\frac{3\mu}{2}\|\frac{\partial \sigma}{\partial r}\|_{L^\infty[0,2]}\Big(1+\frac{M\tau}{(R_{\min})^3-M\tau}\Big)$ are independent of $r$. It is clear that $M_4 > M_2 > M_3$, thus
\begin{equation}
    \label{estinew}
    \|\bar{p}_1-\bar{p}_2\|_{W^{2,\infty}[0,1]} \le M_4 \tau \|p_1-p_2\|_{W^{2,\infty}[0,2]}.
\end{equation}
{\red $\bar{p}_1$ and $\bar{p}_2$ are extended in the same way as in \re{extend}, hence
\begin{equation}
    \label{extend1}
    (\tilde{p}_1 -\tilde{p}_2) (r) =
    \left\{
    \begin{split}
        &(\bar{p}_1 -\bar{p}_2)(r),\hspace{2em} &r\le 1,\\
        &(\bar{p}_1 - \bar{p}_2)(1) + (\bar{p}_1 - \bar{p}_2)'(1)(r-1),\hspace{2em} &1<r\le 2.
    \end{split}
    \right.
\end{equation}
It is easy to derive $\tilde{p}_1-\tilde{p}_2 \in W^{2,\infty}[0,2]$ and $\|\tilde{p}_1-\tilde{p}_2\|_{W^{2,\infty}[0,2]}\le 2\|\bar{p}_1-\bar{p}_2\|_{W^{2,\infty}[0,1]}$. Combining with \re{estinew}, we have
\begin{equation}
    \label{estilast}
    \|\mathcal{L}p_1 - \mathcal{L}p_2\|_{W^{2,\infty}[0,2]} =  \|\tilde{p}_1-\tilde{p}_2\|_{W^{2,\infty}[0,2]} \le 2 M_4 \tau \|p_1-p_2\|_{W^{2,\infty}[0,2]}.
\end{equation}
We thus obtain a contraction mapping $\mathcal{L}$ by taking $\tau$ small so that
$2 M_4 \tau < 1$.}

Now for any particular $R_* \in [R_{\min}, R_{\max}]$, $\sigma$ and $p$ are uniquely determined, it remains to show that there exists a unique solution $R_*$ satisfying \re{3.9}. Substituting \re{3.10} into \re{3.9}, we find that it is equivalent to show that there exists a unique solution to the equation:
$$\int_0^1 \Big[\frac{I_0(rR + \frac{1}{R^2}\int_{-\tau}^0 \frac{\partial p}{\partial r}(\xi(s;r,0)) \dif s)}{I_0(R)} - \tilde{\sigma}\Big]r \dif r = 0.$$
In order to prove the above statement, we set
$$F(R,\tau)= \int_0^1 \Big[\frac{I_0(rR + \frac{1}{R^2}\int_{-\tau}^0 \frac{\partial p}{\partial r}(\xi(s;r,0)) \dif s)}{I_0(R)} - \tilde{\sigma}\Big]r \dif r,$$
then $F(R, 0) =\int_0^1 \Big[\frac{I_0(rR)}{I_0(R)} - \tilde{\sigma}\Big]r \dif r = P_0(R) - \frac{\tilde{\sigma}}{2}$,
where $P_0(R) = \frac{I_1(R)}{RI_0(R)}$. From \cite{Erdelyi} (pg.61) and \cite{FH3} (the equations (2.19) (2.21) and (2.26)), it is known that $P_0(R)$ is decreasing in $R$ and $0<P_0(R)\le \frac{1}{2}$. Since $0<\tilde{\sigma}<1$, there exists a unique solution to the equation $F(R,0) = 0$, denoted by $R_S$. In addition, from the fact that $F(R,0)$ is monotone decreasing in $R$, we have
$$ F(\frac{1}{2} R_S, 0) > 0, \hspace{1em} F(\frac{3}{2} R_S, 0) < 0.$$
Next we take derivative of $F(R,\tau)$ with respect to $R$ and expand the partial derivative in $\tau$ to get
\begin{equation}
\frac{\p F(R,\tau)}{\p R} = \frac{\p F(R,0)}{\p R} + \frac{\p^2 F(R,0)}{\p R\p \tau}\tau + O(\tau^2).
\end{equation}
When $\tau$ is small enough, the signs of $\frac{\partial F(R,\tau)}{\partial R}$ and  $\frac{\partial F(R,0)}{\partial R}$ should be the same, thus $F(R,\tau)$ is also monotone decreasing in $R$. In addition, from the fact that $F(R,\tau)$ is continuous in $\tau$, we have
$$ F(\frac{1}{2} R_S, \tau) > 0, \hspace{1em} F(\frac{3}{2} R_S, \tau) < 0.$$
Hence there exists a unique solution $R_*$ satisfying $F(R_*,\tau)=0$, i.e., equation \re{3.9}, when $\tau$ is small enough; furthermore we have $R_{\min} = \frac12 R_S < R_* < \frac32 R_S = R_{\max}$. The proof is complete.
\end{proof}

{\color{blue} \begin{rem} Since $\partial p_*/\partial r = 0 $ on the boundary for the
radially symmetric {\em  stationary solution}, we have $\xi(s;R_*,0)\equiv R_*$  and
$\xi(s;r, 0)$ will stay within the unit disk if initially  $\xi(0;r,0)=r<R_*$.
From  \re{3.2} it is clear that $\frac{\p p}{\p r}$ is not identically $0$ for $0<r<R_*$, therefore $\xi(s;r, 0)$ will not be a constant for $-\tau\le s\le 0$.

For our stationary solution, the free boundary does not move in time. But the velocity field
inside the tumor domain is not zero, and  movements are necessary to
 replace dead cells with new daughter cells to reach an equilibrium. Because of the time delay,
such replacement requires a   time $\tau$ for the  mitosis to complete
 and for the daughter cells to move  into the right place;
and that is incorporated into
the equation \re{3.3}. In that sense, the delay-time derivative cannot be set to zero
even for our stationary solution and our  solution differs from the classical
definition of stationary solution where time derivatives are all zero.
\end{rem}
}

Throughout the paper, we denote the corresponding radially symmetric stationary solution by $(\sigma_*,p_*,R_*)$.

\section{Linear Stability}
In this section, we consider the linearized problem of system \re{1.10}--\re{1.15}
with $\lambda = 0$ and shall determine a critical value $\mu_*$ such that $(\sigma_*,p_*,R_*)$ is linearly stable in the interval $0<\mu<\mu_*$ and linearly unstable for $\mu>\mu_*$. We shall also discuss the impact of time delay $\tau$ on the stability and the size of tumor.

We begin by making some small non-radially symmetric perturbations on the initial conditions (Note that the perturbations are made in a time interval $[-\tau,0]$ instead of an initial time due to the presence of time delay, and we assume for simplicity that the perturbation is uniform on the interval $[-\tau,0])$:
\begin{eqnarray}
&&\partial \Omega(t): r = R_* + \epsilon\rho_0(\theta),\hspace{2em} -\tau\le t\le0,\label{4.1}\\
&&\sigma(r,\theta,t) = \sigma_*(r) + \epsilon w_0(r,\theta),\hspace{2em} -\tau\le t\le0\label{4.2}.
\end{eqnarray}
To linearize \re{1.10}--\re{1.15}, we let
\begin{eqnarray}
     & &\partial \Omega(t): r = R_* + \epsilon \rho(\theta,t)+O(\epsilon^2),\label{4.3}\nonumber\\
    &&\sigma(r,\theta,t) = \sigma_*(r) + \epsilon w(r,\theta,t)+O(\epsilon^2), \label{thm4.3}\\
    &&p(r,\theta,t)= p_*(r) + \epsilon q(r,\theta,t)+O(\epsilon^2).\label{4.4}\nonumber
\end{eqnarray}

Since we are considering a domain which is a small perturbation of a disk, we shall express $\xi(s;r,\theta,t)$ in equation \re{1.12} in  polar coordinates $(\xi_1(s;r,\theta,t),\xi_2(s;r,\theta,t))$, where $\xi_1$ represents radius, and $\xi_2$ represents angle. Thus, the vector $\xi$ is expressed in the form $\xi = \xi_1 \vec{e}_1(\xi)$, where $\vec{e}_1(\xi) = \cos(\xi_2)\vec{i} + \sin(\xi_2)\vec{j}$ and $\vec{e}_2(\xi) = -\sin(\xi_2)\vec{i} + \cos(\xi_2)\vec{j}$ are the two basis vectors in polar coordinates. We then expand  $\xi_1,\xi_2$ in $\epsilon$ as
\be\label{thm4.4} \left\{
\begin{array}{c}
\xi_1 = \xi_{10} + \epsilon \xi_{11} + O(\epsilon^2),\\
\xi_2 = \xi_{20} + \epsilon \xi_{21} + O(\epsilon^2).
\end{array} \right.
\ee
Accordingly, $-\nabla $ is also expressed in polar coordinates, i.e., $-\nabla = -\vec{e}_1\frac{\partial}{\partial r} - \frac{1}{r}\vec{e}_2\frac{\partial }{\partial \theta}.$ Since $\frac{\dif \vec{e}_1(\xi)}{\dif s} = \vec{e}_2(\xi) \frac{\dif \xi_2}{\dif s},$
equation \re{1.12} is equivalent to
\begin{equation*}
\begin{split}
\frac{\dif \xi}{\dif s} &= \frac{\dif (\xi_1 \vec{e}_1(\xi))}{\dif s} = \frac{\dif \xi_1}{\dif s}\vec{e}_1(\xi) + \xi_1 \frac{\dif \xi_2}{\dif s}\vec{e}_2(\xi) \\
&= -\nabla p = -\frac{\partial p}{\partial r}\vec{e}_1(\xi) - \frac{1}{\xi_1}\frac{\partial p}{\partial \theta}\vec{e}_2(\xi) ,
\end{split}
\end{equation*}
from which we obtain two sets of ODEs in polar coordinates:
\vspace{-20pt}
\begin{multicols}{2}
\begin{equation*}
\left \{
\begin{split}
&\frac{\dif \xi_1}{\dif s} = -\frac{\partial p}{\partial r}(\xi_1,\xi_2,s),\quad t-\tau\le s\le t,\\
&\xi_1\Big|_{s=t} = r;
\end{split}
\right.
\end{equation*}

\begin{equation*}
\left \{
\begin{split}
&\frac{\dif \xi_2}{\dif s} = -\frac{1}{(\xi_1)^2}\frac{\partial p}{\partial \theta}(\xi_1,\xi_2,s),\quad t-\tau\le s\le t,\\
&\xi_2\Big|_{s=t} = \theta.
\end{split}
\right.
\end{equation*}
\end{multicols}

\noindent Substituting \re{thm4.3} and \re{thm4.4} into the above ODEs,
and dropping higher order terms, we   get
\begin{equation}\label{4.5}
\left \{
\begin{split}
&\frac{\dif \xi_{10}}{\dif s} = -\frac{\partial p_*}{\partial r}(\xi_{10}),\hspace{2em} t-\tau\le s \le t,\\
&\xi_{10}\Big|_{s=t} = r;
\end{split}
\right.
\end{equation}

\begin{equation}\label{4.6}
\left \{
\begin{split}
&\frac{\dif \xi_{11}}{\dif s} = -\frac{\partial^2 p_*}{\partial r^2}(\xi_{10})\xi_{11} - \frac{\partial q}{\partial r}(\xi_{10},\xi_{20},s),\hspace{2em} t-\tau\le s \le t,\\
&\xi_{11}\Big|_{s=t} = 0;
\end{split}
\right.
\end{equation}

\begin{equation}\label{4.7}
\left \{
\begin{split}
&\frac{\dif \xi_{20}}{\dif s} = 0,\hspace{2em} t-\tau\le s \le t,\\
&\xi_{20}\Big|_{s=t} = \theta;
\end{split}
\right.
\end{equation}

\begin{equation}\label{4.8}
\left \{
\begin{split}
&\frac{\dif \xi_{21}}{\dif s} = -\;\frac1{(\xi_{10})^2} \frac{\partial q}{\partial \theta}(\xi_{10}, \xi_{20}, s), \hspace{2em} t-\tau\le s \le t\\
&\xi_{21}\Big |_{s=t} = 0.
\end{split}
\right.
\end{equation}
Note that the equation for $\xi_{10}$ is the same as the equation for $\xi$ in radially symmetric case (i.e., \re{4.5} and \re{2.3} are the same), thus $\xi_{10}$ is independent of $\theta$; and from \re{4.7} we can easily derive $\xi_{20} \equiv \theta$.

Substituting \re{4.3} and \re{4.5}--\re{4.8} into \re{1.10}--\re{1.15}, using also the mean-curvature
formula in the 2-dimensional case for the curve $r=\rho$:
\[
 \kappa = \frac{\rho^2+2\rho_\theta^2-\rho\cdot\rho_{\theta\theta}}{\big(\rho^2+(\rho_\theta)^2\big)^{3/2}},
\]
and collecting only the linear terms in $\epsilon$, we obtain the linearized system in $B_{R_*}$ ($B_{R_*}$ denotes the disk centered at 0 with radius $R_*$), namely,
\begin{eqnarray}
&&  \Delta w(r,\theta,t) = w(r,\theta,t),\label{4.9}\\
&&w(R_*,\theta,t)=-\frac{\partial \sigma_*}{\partial r}\Big|_{r=R_*}\rho(\theta,t),\label{4.10}\\
&&\Delta q(r,\theta,t) = -\mu\frac{\partial \sigma_*}{\partial r}(\xi_{10}(t-\tau;r,t))\xi_{11}(t-\tau;r,\theta,t)-\mu w(\xi_{10}(t-\tau;r,t),\theta,t-\tau),\label{4.11}\\
&&q(R_*,\theta,t)=-\frac{1}{R_*^2}(\rho(\theta,t)+\rho_{\theta\theta}(\theta,t)) \void{- \frac{\partial p_*}{\partial r}\Big|_{r=R_*}\rho(\theta,t)},\label{4.12}\\
&&\frac{\dif \rho}{\dif t} = -\frac{\partial^2 p_*}{\partial r^2}\Big|_{r=R_*}\rho(\theta,t) - \frac{\partial q}{\partial r}\Big|_{r=R_*},\label{4.13}
\end{eqnarray}
where the equations for $\xi_{10}$ and $\xi_{11}$ are listed in \re{4.5} and \re{4.6}, respectively. Since $\xi_{21}$ does not appear explicitly in \re{4.9}--\re{4.13}, it is not needed.

In what follows, we  seek  solutions of the form
\begin{eqnarray*}
&& w(r,\theta,t) = w_n(r,t)\cos(n\theta),\\
&& q(r,\theta,t) = q_n(r,t)\cos(n\theta),\\
&& \rho(\theta,t) = \rho_n(t)\cos(n\theta),\\
&& \xi_{11}(s;r,\theta,t) = \varphi_n(s;r,t)\cos(n\theta).
\end{eqnarray*}
Noting that in a similar manner, we can also seek  solutions of the form
\begin{eqnarray*}
&& w(r,\theta,t) = w_n(r,t)\sin(n\theta),\\
&& q(r,\theta,t) = q_n(r,t)\sin(n\theta),\\
&& \rho(\theta,t) = \rho_n(t)\sin(n\theta),\\
&& \xi_{11}(s;r,\theta,t) = \varphi_n(s;r,t)\sin(n\theta).
\end{eqnarray*}

Using the relation $\Delta = \partial_{rr} +\frac{1}{r}\partial_r + \frac{1}{r^2}\partial_{\theta\theta}$
in \re{4.9}--\re{4.13}, we obtain the following system in $B_{R_*}$:
\begin{gather}
 - \frac{\partial^2w_n(r,t)}{\partial r^2} - \frac{1}{r}\frac{\partial w_n(r,t)}{\partial r}+\Big(\frac{n^2}{r^2}+1\Big)w_n(r,t) = 0,\label{4.14}\\
w_n(R_*,t) = -\frac{\partial \sigma_*}{\partial r}\Big |_{r=R_*} \rho_n(t),\label{4.15}
\end{gather}
\begin{equation}\label{4.16}
    \begin{split}
        -\frac{\partial^2 q_n(r,t)}{\partial r^2}-\frac{1}{r}\frac{\partial  q_n(r,t)}{\partial r} + \frac{n^2}{r^2}q_n(r,t) =& \mu w_n(\xi_{10}(t-\tau;r,t),t-\tau)\\
&+ \mu\frac{\partial \sigma_*}{\partial r}(\xi_{10}(t-\tau;r,t))\varphi_n(t-\tau;r,t) ,
    \end{split}
\end{equation}
\vspace{-8pt}
\begin{gather}
q_n(R_*,t) = \frac{n^2-1}{R_*^2}\rho_n(t)\void{-\frac{\partial p_*}{\partial r}\Big|_{r=R_*}\rho_n(t)},\label{4.17}\\
\frac{\dif \rho_n(t)}{\dif t} = -\frac{\partial^2 p_*}{\partial r^2}\Big |_{r=R_*} \rho_n(t) - \frac{\partial q_n}{\partial r}\Big |_{r=R_*},\label{4.18}
\end{gather}
where the steady state solution $(\sigma_*, p_*, R_*)$ satisfies \re{3.1}--\re{3.5}, $\xi_{10}$ satisfies \re{4.5} and $\varphi_n$ satisfies the following equation:
\begin{equation}\label{4.19}
\left \{
\begin{split}
&\frac{\partial \varphi_n(s;r,t)}{\partial s} = -\frac{\partial^2 p_*}{\partial r^2}(\xi_{10})\varphi_n(s;r,t) - \frac{\partial q_n(\xi_{10},s)}{\partial r},\hspace{2em} t-\tau\le s \le t,\\
&\varphi_n\Big|_{s=t} = 0.
\end{split}
\right.
\end{equation}

\setcounter{subsection}{-1}
\subsection{Properties of Bessel Functions}
In the sequel, we shall use modified Bessel functions $I_n(\xi)$ for $n\ge 0$. For convenience, we collect some properties of these functions here.

Recall that the modified Bessel function $I_n(\xi)$ satisfies the differential equations
\begin{equation}\label{4.20}
    I''_n(\xi) + \frac{1}{\xi}I'_n(\xi) - \Big(1+\frac{n^2}{\xi^2}\Big) I_n(\xi) = 0,
\end{equation}
and is given by
\begin{equation}
    \label{4.21}
    I_n(\xi) = \Big(\frac{\xi}{2}\Big)^n \sum_{k=0}^\infty \frac{1}{k!\Gamma(n+k+1)}\Big(\frac{\xi}{2}\Big)^{2k}.
\end{equation}
By \cite{FFBessel, FH3, FR2}, $I_n(\xi)$ satisfies
\begin{gather}
I'_n(\xi) + \frac{n}{\xi}I_n(\xi) = I_{n-1}(\xi), \hspace{2em} n\ge 1, \label{4.22}\\
I'_n(\xi) - \frac{n}{\xi}I_n(\xi) = I_{n+1}(\xi), \hspace{2em} n\ge 0, \label{4.23}\\
\xi^{n+1} I_n(\xi) = \frac{\dif}{\dif \xi}(\xi^{n+1} I_{n+1}(\xi)), \hspace{2em} n\ge 0, \label{4.24}\\
I_{n-1}(\xi) - I_{n+1}(\xi) = \frac{2n}{\xi}I_n(\xi), \hspace{2em} n\ge 1, \label{4.25}\\
I_{n-1}(\xi)I_{n+1}(\xi) < I_n^2(\xi), \hspace{2em} \xi>0, \label{4.26}\\
I_{n-1}(\xi)I_{n+1}(\xi) > I_n^2(\xi) -\frac{2}{\xi}I_n(\xi)I_{n+1}(\xi), \hspace{2em} \xi>0, \label{4.27}\\
I_m(\xi) I_n(\xi) = \sum_{k=0}^{\infty}\frac{\Gamma(m+n+2k+1)(\xi/2)^{m+n+2k}}{k!\Gamma(m+k+1)\Gamma(n+k+1)\Gamma(m+n+k+1)},\label{4.28}
\end{gather}
These properties of $I_n(\xi)$ are needed in the subsequent discussions.

\subsection{Expansion in $\tau$.} It is impossible to solve the system \re{3.1}--\re{3.5}, \re{4.14}--\re{4.19} explicitly. However, we would like to study the impact of $\tau$ on this system.
Since the time delay $\tau$ is actually very small, we look for the expansion in $\tau$ for the system \re{3.1}--\re{3.5}, \re{4.14}--\re{4.19}. Let us denote
\begin{eqnarray*}
R_* &=& R_*^0 + \tau R_*^1 + O(\tau^2),\\
\sigma_* &=& \sigma_*^0 + \tau\sigma_*^1 + O(\tau^2),\\
p_* &=& p_*^0 +  \tau p_*^1  + O(\tau^2),\\
w_n &=& w_n^0 + \tau w_n^1 + O(\tau^2),\\
q_n &=& q_n^0 + \tau q_n^1 + O(\tau^2),\\
\rho_n &=& \rho_n^0 + \tau\rho_n^1  + O(\tau^2).
\end{eqnarray*}

In order to compute the expansion in $\tau$ for \re{4.14}--\re{4.19}, we need to compute $\frac{\p\sigma_*}{\p r}$
in \re{4.15} \re{4.16}, and $\frac{\p^2 p_*}{\p r^2}$ in \re{4.18} \re{4.19}. To do that, we expand system \re{3.1} --- \re{3.5} in $\tau$. It follows from \re{3.1} and \re{3.4} that
$$\sigma_*(r) = \frac{I_0(r)}{I_0(R_*)}= \frac{I_0(r)}{I_0(R_*^0)} +\tau \frac{I_0(r)(-I_1(R_*^0))R_*^1}{I_0^2(R_*^0)} + O(\tau^2),$$
and therefore,
\begin{eqnarray}
\sigma_*^0(r) &=& \frac{I_0(r)}{I_0(R_*^0)},\label{4.29}\\
\sigma_*^1(r) &=& -\frac{I_0(r)I_1(R_*^0)}{I^2_0(R_*^0)}R_*^1.\label{4.30}
\end{eqnarray}

To find $\frac{\p^2 p_*}{\p r^2}$, we start with \re{3.2} and \re{3.3}.  We first integrate equation \re{3.3} over the interval $(-\tau,0)$ to obtain
$$r - \xi(-\tau;r,0) = \int_{-\tau}^0 -\frac{\partial p_*}{\partial r}(\xi(s;r,0))\dif s,$$
i.e.,
$$\xi(-\tau;r,0) = r + \int_{-\tau}^0 \frac{\partial p_*}{\partial r}(\xi(s;r,0))\dif s =
 r + \tau \frac{\partial p_*^0(r)}{\partial r} + O(\tau^2).$$
We then substitute the above expression for $\xi(-\tau;r,0)$ into \re{3.2}, since
\begin{equation}\label{4.31}
    \begin{split}
    \sigma_*(\xi(-\tau;r,0)) &= \sigma_*\Big(r + \tau \frac{\partial p_*^0(r)}{\partial r} + O(\tau^2)\Big)\\
         &= \sigma_*^0\Big ( r + \tau \frac{\partial p_*^0(r)}{\partial r} \Big ) + \tau\sigma_*^1\Big ( r + \tau \frac{\partial p_*^0(r)}{\partial r} \Big ) + O(\tau^2)\\
        &= \sigma_*^0(r) + \tau\Big(\frac{\partial \sigma_*^0}{\partial r}(r)\frac{\partial p_*^0}{\partial r}(r) +  \sigma_*^1(r)\Big) + O(\tau^2),
    \end{split}
\end{equation}
we derive the equations for $p_*^0$ and $p_*^1$,
\begin{gather}
-\frac{\partial^2 p_*^0}{\partial r^2} -\frac{1}{r}\frac{\partial p_*^0}{\partial r} = \mu[\sigma_*^0 - \tilde{\sigma}],\label{4.32}\\
-\frac{\partial^2 p_*^1}{\partial r^2} - \frac{1}{r}\frac{\partial p_*^1}{\partial r} = \mu\frac{\partial \sigma_*^0}{\partial r}\frac{\partial p_*^0}{\partial r}+\mu\sigma_*^1.\label{4.33}
\end{gather}
The boundary condition $p_*(R_*) = \frac{1}{R_*}$ is expanded as follows:
\begin{equation*}
    p_*^0(R_*^0) + \tau\frac{\partial p_*^0}{\partial r}(R_*^0)R_*^1 + \tau p_*^1(R_*^0) + O(\tau^2) = \frac{1}{R_*^0} -\tau\frac{R_*^1}{(R_*^0)^2} + O(\tau^2).
\end{equation*}
Thus, we have
\begin{gather}
    p_*^0(R_*^0) = \frac{1}{R_*^0},\label{4.34}\\
    p_*^1(R_*^0) = -\frac{R_*^1}{(R_*^0)^2}-\frac{\partial p_*^0}{\partial r}(R_*^0)R_*^1.\label{4.35}
\end{gather}

Next we   expand the integral equation \re{3.5} using \re{4.31}:
\begin{equation}\label{4.36}
    \begin{split}
      0 & =  \int_0^{R_*}[\sigma_*(\xi(-\tau;r,0))-\tilde{\sigma}]    
      r\dif r\\
        &= \int_0^{R_*}[\sigma_*^0(r)-\tilde{\sigma}]r\dif r + \tau \int_0^{R_*^0}\Big[\frac{\partial \sigma_*^0}{\partial r}(r)\frac{\partial p_*^0}{\partial r}(r)+\sigma_*^1(r)\Big]r\dif r + O(\tau^2).
    \end{split}
\end{equation}
By \re{4.29}, the first part of \re{4.36} is integrated explicitly as
\begin{equation}\label{4.37}
    \begin{split}
        \int_0^{R_*}& [\sigma_*^0(r)-\tilde{\sigma}]r\dif r =  \int_0^{R_*}\Big[\frac{I_0(r)}{I_0(R_*^0)}-\tilde{\sigma}\Big]r\dif r
        = \frac{R_*I_1(R_*)}{I_0(R_*^0)}-\frac{\tilde{\sigma}}{2}(R_*)^2\\
        =& \frac{R_*^0 I_1(R_*^0)}{I_0(R_*^0)} - \frac{\tilde{\sigma}}{2}(R_*^0)^2 + \tau\Big[ \frac{R_*^1 I_1(R_*^0)}{I_0(R_*^0)}
        + \frac{R_*^0(I_0(R_*^0) + I_2(R_*^0))}{2I_0(R_*^0)}R_*^1-\tilde{\sigma}R_*^0 R_*^1\Big] + O(\tau^2).
    \end{split}
\end{equation}
Combining \re{4.36} and \re{4.37}, we derive
\begin{equation*}
    \begin{split}
      R_*^0 \Big( \frac{ I_1(R_*^0)}{I_0(R_*^0)}-\frac{\tilde{\sigma}}{2}R_*^0\Big) &+ \tau \Big [\frac{R_*^1 I_1(R_*^0)}{I_0(R_*^0)} +  \frac{R_*^0(I_0(R_*^0)+I_2(R_*^0))}{2I_0(R_*^0)}R_*^1 \\
        &- \tilde{\sigma}R_*^0 R_*^1 +  \int_0^{R_*^0}\Big(\frac{\partial \sigma_*^0}{\partial r}(r)\frac{\partial p_*^0}{\partial r}(r)+\sigma_*^1(r)\Big)r\dif r\Big] = O(\tau^2),
    \end{split}
\end{equation*}
which leads to a set of two equations,
\begin{gather}
    \frac{ I_1(R_*^0)}{I_0(R_*^0)}-\frac{\tilde{\sigma}}{2} R_*^0  = 0,\label{4.38}\\
    \frac{R_*^1 I_1(R_*^0)}{I_0(R_*^0)} +  \frac{R_*^0(I_0(R_*^0)+I_2(R_*^0))}{2I_0(R_*^0)}R_*^1 -\tilde{\sigma}R_*^0 R_*^1 + \int_0^{R_*^0}\Big(\frac{\partial \sigma_*^0}{\partial r}(r)\frac{\partial p_*^0}{\partial r}(r)+\sigma_*^1(r)\Big)r\dif r = 0.\label{4.39}
\end{gather}
These two equations determine $R_*^0$ and $R_*^1$, respectively.

Similarly, $w_n^0$ and $w_n^1$ satisfy the same equation \re{4.14}. Expanding \re{4.15} we find
\[
w_n^0(R_*^0+\tau R_*^1,t)+\tau w_n^1(R_*^0,t) = - \Big(\frac{\p\sigma_*^0}{\p r}(R_*^0+\tau R_*^1) +
\tau \frac{\p\sigma_*^1}{\p r}(R_*^0) \Big) [\rho_n^0(t) + \tau \rho_n^1(t) ] + O(\tau^2),
\]
which gives
\bea
 && w_n^0(R_*^0,t) = -\frac{\partial \sigma_*^0}{\partial r}(R_*^0)\rho_n^0(t), \label{4.40} \\
 && w_n^1(R_*^0,t) = -\frac{\partial w_n^0}{\partial r}(R_*^0,t)R_*^1-\frac{\partial \sigma_*^0}{\partial r}(R_*^0)\rho_n^1(t) - \frac{\partial^2 \sigma_*^0}{\partial r^2}(R_*^0)R_*^1 \rho_n^0(t) -\frac{\partial \sigma_*^1}{\partial r}(R_*^0)\rho_n^0(t). \label{4.41}
\eea

The next step is to expand \re{4.16} and \re{4.19} in $\tau$. Noting that
\begin{equation}\label{4.42}\begin{split}
\varphi_n(t-\tau;r,t) =& \varphi_n(t;r,t) + \frac{\partial \varphi_n}{\partial s}(t;r,t)(-\tau) + O(\tau^2)\\
=& 0 + \Big(-\frac{\partial^2 p_*}{\partial r^2}(\xi_{10})\varphi_n(t;r,t)-\frac{\partial q_n}{\partial r}(r,t)\Big)(-\tau) + O(\tau^2)\\
=& 0 + \Big(0 - \frac{\partial q_n^0}{\partial r}(r,t)\Big)(-\tau) + O(\tau^2)\\
=& \tau\frac{\partial q_n^0}{\partial r}(r,t) + O(\tau^2),
\end{split}
\end{equation}
and using \re{4.5},
\begin{equation}\label{4.43}
    \begin{split}
        \frac{\partial \sigma_*}{\partial r}(\xi_{10}(t-\tau;r,t))\varphi_n(t-\tau;r,t)
        &= \Big(\frac{\partial \sigma_*^0}{\partial r}(r) + O(\tau)\Big)\Big(\tau \frac{\partial q_n^0}{\partial r}(r,t) + O(\tau^2)\Big)\\
        &= \tau \frac{\partial \sigma_*^0}{\partial r}(r)\frac{\partial q_n^0}{\partial r}(r,t) + O(\tau^2),
    \end{split}
\end{equation}
we deduce,
\begin{equation}\label{4.44}
\begin{split}
    w_n(\xi_{10}(t-\tau;r,t),t-\tau) =& w_n^0(\xi_{10}(t-\tau;r,t),t-\tau) + \tau w_n^1(r,t) + O(\tau^2)\\
    =& w_n^0\Big(r+\int_{t-\tau}^t \frac{\partial p_*}{\partial r}(\xi_{10}(s;r,t))\dif s, t-\tau\Big) + \tau w_n^1(r,t) + O(\tau^2)\\
    =& w_n^0(r,t) + \tau\Big[\frac{\partial w_n^0}{\partial r}(r,t)\frac{\partial p_*^0}{\partial r}(r) - \frac{\partial w_n^0}{\partial t}(r,t) + w_n^1(r,t) \Big] + O(\tau^2).
    \end{split}
\end{equation}
Applying \re{4.42}--\re{4.44} into \re{4.16}, we derive equations for $q_n^0$ and $q_n^1$, respectively,
\begin{gather}
    -\frac{\partial^2 q_n^0}{\partial r^2}-\frac{1}{r}\frac{\partial q_n^0}{\partial r}+ \frac{n^2}{r^2}q_n^0 = \mu w_n^0,\label{4.45}\\
    -\frac{\partial^2 q_n^1}{\partial r^2}-\frac{1}{r}\frac{\partial q_n^1}{\partial r}+ \frac{n^2}{r^2}q_n^1 = \mu \frac{\partial \sigma_*^0}{\partial r}\frac{\partial q_n^0}{\partial r}+ \mu\frac{\partial w_n^0}{\partial r}\frac{\partial p_*^0}{\partial r} -\mu\frac{\partial w_n^0}{\partial t} + \mu w_n^1.\label{4.46}
\end{gather}
To get the boundary condition for $q_n^0$ and $q_n^1$, we write \re{4.17} as
\[
q_n^0(R_*^0+\tau R_*^1,t)+\tau q_n^1(R_*^0,t) = \frac{n^2-1}{(R_*^0+\tau R_*^1)^2}[\rho_n^0(t) + \tau \rho_n^1(t) ] + O(\tau^2),
\]
hence
\bea
 &&  q_n^0(R_*^0,t) = \frac{n^2-1}{(R_*^0)^2}\rho_n^0(t),  \label{4.47}\\
 && q_n^1(R_*^0,t)=-\frac{\partial q_n^0}{\partial r}(R_*^0,t)R_*^1 + \frac{n^2-1}{(R_*^0)^2}\rho_n^1(t) - \frac{2(n^2-1)R_*^1}{(R_*^0)^3}\rho_n^0(t). \label{4.48}
\eea

Finally from \re{4.18} we have
\[
 \frac{\dif}{\dif t}[\rho_0^0(t)+\tau \rho_n^1(t)] =
 - \Big(\frac{\p^2 p_*^0}{\p r^2}(R_*^0+\tau R_*^1) +
\tau \frac{\p^2 p_*^1}{\p r^2}(R_*^0) \Big) [\rho_n^0(t) + \tau \rho_n^1(t) ]
 - \frac{\p( q_n^0+\tau q_n^1)}{\p r}(R_*^0+\tau R_*^1) + O(\tau^2),
\]
which implies
\bea\label{4.49}
&& \frac{\dif \rho_n^0(t)}{\dif t}  = -\frac{\partial ^2 p_*^0}{\partial r^2}(R_*^0)\rho_n^0(t) - \frac{\partial q_n^0}{\partial r}(R_*^0,t), \\
&&     \begin{split}\label{4.50}
    \frac{\dif \rho_n^1(t)}{\dif t}= &-\frac{\partial^2 p_*^0}{\partial r^2}(R_*^0)\rho_n^1(t)-\frac{\partial^3 p_*^0}{\partial r^3}(R_*^0)R_*^1 \rho_n^0(t)\\
    &-\frac{\partial^2 p_*^1}{\partial r^2}(R_*^0)\rho_n^0(t)-\frac{\partial^2 q_n^0}{\partial r^2}(R_*^0,t)R_*^1 - \frac{\partial q_n^1}{\partial r}(R_*^0,t).
    \end{split}
\eea

We now group all the zeroth-order terms and the first-order terms in $\tau$, respectively, leading to two separate systems.

\subsection{zeroth-order terms in $\tau$} Collecting the zeroth-order terms from \re{4.29}, \re{4.32}, \re{4.34}, \re{4.38}, \re{4.40}, \re{4.45}, \re{4.47}, and \re{4.49}, we obtain the following system in $B_{R_*^0}$,
\begin{gather}
-\frac{\partial^2 \sigma_*^0}{\partial r^2} - \frac{1}{r}\frac{\partial\sigma_*^0}{\partial r}  = - \sigma_*^0, \m \sigma_*^0(R^0_*)=1, \mm\mbox{i.e., } \m \sigma_*^0(r) = \frac{I_0(r)}{I_0(R_*^0)},\label{4.51}\\
\frac{ I_1(R_*^0)}{I_0(R_*^0)}-\frac{\tilde{\sigma}}{2} R_*^0  = 0, \mm\mbox{i.e., } \m \frac{\tilde{\sigma}}{2} = \frac{I_1(R_*^0)}{R_*^0 I_0(R_*^0)},\label{4.52}\\
-\frac{\partial^2 p_*^0}{\partial r^2} - \frac{1}{r}\frac{\partial p_*^0}{\partial r} = \mu[\sigma_*^0 - \tilde{\sigma}], \hspace{2em} p_*^0(R_*^0)=\frac{1}{R_*^0},\label{4.53}\\
-\frac{\partial^2 w_n^0}{\partial r^2}-\frac{1}{r}\frac{\partial w_n^0}{\partial r}+\Big (\frac{n^2}{r^2}+1\Big) w_n^0 = 0, \hspace{2em} w_n^0(R_*^0,t) = -\frac{\partial \sigma_*^0}{\partial r}(R_*^0)\rho_n^0(t),\label{4.54}\\
-\frac{\partial^2 q_n^0}{\partial r^2}-\frac{1}{r}\frac{\partial q_n^0}{\partial r}+ \frac{n^2}{r^2} q_n^0 = \mu w_n^0, \hspace{2em} q_n^0(R_*^0,t) = \frac{n^2-1}{(R_*^0)^2}\rho_n^0(t),\label{4.55}\\
\frac{\dif \rho_n^0(t)}{\dif t} = -\frac{\partial ^2 p_*^0}{\partial r^2}(R_*^0)\rho_n^0(t) - \frac{\partial q_n^0}{\partial r}(R_*^0,t).\label{4.56}
\end{gather}

We first solve $p_*^0(r)$ and $w_n^0(r,t)$ from \re{4.53} and \re{4.54} as
\begin{gather}
    p_*^0(r) = \frac{1}{4}\mu \tilde{\sigma} r^2 - \mu\frac{I_0(r)}{I_0(R_*^0)} + \frac{1}{R_*^0} + \mu - \frac{1}{4}\mu\tilde{\sigma}(R_*^0)^2,\label{4.57}\\
    w_n^0(r,t) = -\frac{I_1(R_*^0)I_n(r)}{I_0(R_*^0)I_n(R_*^0)}\rho_n^0(t)\label{4.58},
\end{gather}
from which we compute the following terms needed in the subsequent computation,
\bea
    \label{4.59}
  && \frac{\partial w_n^0}{\partial r}(r,t) = -\frac{I_1(R_*^0)}{I_0(R_*^0)I_n(R_*^0)}\Big(I_{n+1}(r)+\frac{n}{r}I_n(r)\Big)\rho_n^0(t),\\
    \label{4.60}
  &&  \frac{\partial^2 p_*^0}{\partial r^2}(R_*^0) = \frac{1}{2}\mu\tilde{\sigma}-\mu\Big( 1- \frac{I_1(R_*^0)}{R_*^0 I_0(R_*^0)}\Big) = \mu\Big[\frac{2I_1(R_*^0)}{R_*^0 I_0(R_*^0)}-1\Big], \\
    \label{4.61}
  &&  \frac{\partial^3 p_*^0}{\partial r^3}(R_*^0) = \mu\Big[ \frac{1}{R_*^0} - \frac{2I_1(R_*^0)}{(R_*^0)^2 I_0(R_*^0)} - \frac{I_1(R_*^0)}{I_0(R_*^0)}\Big],
\eea
in deriving \re{4.60} we also made use of \re{4.52}.
To find $q_n^0$, let $\eta_n^0 = q_n^0 + \mu w_n^0$. Combining \re{4.54} and \re{4.55}, we find that $\eta_n^0$   satisfies
$$-\frac{\partial^2 \eta_n^0}{\partial r^2} - \frac{1}{r}\frac{\partial \eta_n^0}{\partial r} + \frac{n^2}{r^2}\eta_n^0 = 0,\quad \text{in }B_{R_*^0},$$
and its solution is given in the form
$$\eta_n^0(r,t) = C_1(t) r^n,$$
thus,
\begin{equation}\label{4.62}
    q_n^0(r,t) = \eta_n^0(r,t) - \mu w_n^0(r,t) = C_1(t)r^n - \mu w_n^0(r,t),
\end{equation}
where $C_1(t)$ is determined by the boundary condition \re{4.55}. Using also \re{4.58}, we get
\begin{equation}\label{4.63}
    C_1(t) = \frac{1}{(R_*^0)^n}\Big [ \frac{n^2-1}{(R_*^0)^2} - \mu \frac{I_1(R_*^0)}{I_0(R_*^0)}\Big]\rho_n^0(t).
\end{equation}
In order to calculate $\frac{\partial q_n^0}{\partial t}(R_*^0,t)$ in \re{4.56}, we use \re{4.23}, \re{4.58}, \re{4.62}, and \re{4.63} to obtain
\begin{equation}\label{4.64}
    \begin{split}
        \frac{\partial q_n^0}{\partial r}(R_*^0,t) &= C_1(t)n (R_*^0)^{n-1} - \mu\frac{\partial w_n^0}{\partial r}\Big |_{r=R_*^0}
        = \Big[ \frac{n(n^2-1)}{(R_*^0)^3} + \mu\frac{I_1(R_*^0)I_{n+1}(R_*^0)}{I_0(R_*^0)I_n(R_*^0)}\Big]\rho_n^0(t).
    \end{split}
\end{equation}
Taking another derivative with respect to $r$, we have
\begin{equation}
\begin{split}
    \label{4.65}
    \frac{\partial^2 q_n^0}{\partial r^2}(R_*^0,t)=&\; \Big[\frac{n(n-1)}{(R_*^0)^2}\Big(\frac{n^2-1}{(R_*^0)^2}-\frac{\mu I_1(R_*^0)}{I_0(R_*^0)}\Big) \\
    &\; - \frac{\mu I_1(R_*^0) I_{n+1}(R_*^0)}{R_*^0 I_0(R_*^0)I_n(R_*^0)} + \frac{\mu((R_*^0)^2 +n^2-n)I_1(R_*^0)}{(R_*^0)^2 I_0(R_*^0)}\Big]\rho_n^0(t),\hspace{2em} n\ge 2,
    \end{split}
\end{equation}
which will be needed in the subsequent calculations.
Now substituting \re{4.60} and \re{4.64} into \re{4.56}, we derive
\begin{equation*}
\begin{split}
    \frac{\dif \rho_n^0(t)}{\dif t}
    =\Big [ \mu\Big( 1 - \frac{2 I_1(R_*^0)}{R_*^0I_0(R_*^0)} - \frac{I_1(R_*^0)I_{n+1}(R_*^0)}{I_0(R_*^0)I_n(R_*^0)}\Big) - \frac{n(n^2-1)}{(R_*^0)^3} \Big] \rho_n^0(t),
\end{split}
\end{equation*}
which integrates to
\begin{equation}\label{4.66}
    \rho_n^0(t) = \rho_n^0(0)\exp\Big\{ \Big [\mu\Big( 1 - \frac{2 I_1(R_*^0)}{R_*^0I_0(R_*^0)} - \frac{I_1(R_*^0)I_{n+1}(R_*^0)}{I_0(R_*^0)I_n(R_*^0)}\Big) - \frac{n(n^2-1)}{(R_*^0)^3}\Big ] t \Big\}.
\end{equation}
We shall discuss the long-time behavior of $\rho_n^0(t)$ based on \re{4.66}. As will be seen, the analysis is different for $n=0$, $n=1$, and $n\ge 2$.

\begin{lem}\label{thm4.1}
For $n=0$ and any $\mu>0$, there exists $\delta>0$ such that $|\rho_0^0(t)|\le |\rho_0^0(0)|e^{-\delta t}$, for all $t>0$.
\end{lem}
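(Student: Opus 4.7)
The plan is to evaluate the explicit formula \re{4.66} at $n=0$ and show that the bracketed coefficient in the exponent is strictly negative; then $\delta$ can be read off directly.

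First, I would set $n=0$ in \re{4.66}. The boundary term $\frac{n(n^2-1)}{(R_*^0)^3}$ vanishes, and the coefficient reduces to
\[
A(R_*^0) \;\triangleq\; \mu\Big( 1 - \frac{2 I_1(R_*^0)}{R_*^0 I_0(R_*^0)} - \frac{I_1^2(R_*^0)}{I_0^2(R_*^0)}\Big),
\]
so that $\rho_0^0(t) = \rho_0^0(0)\exp\{A(R_*^0) t\}$. Using \re{4.52}, the middle term simplifies to $\tilde{\sigma}$, and the goal becomes verifying
\[
1 - \tilde{\sigma} - \frac{I_1^2(R_*^0)}{I_0^2(R_*^0)} < 0.
\]

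The key step is this Bessel-function inequality, and it follows directly from the identities collected in Section~4.0. Specifically, \re{4.26} at $n=1$ gives $I_0(R) I_2(R) < I_1^2(R)$; combined with the recurrence \re{4.25} at $n=1$, namely $I_2(R) = I_0(R) - \frac{2}{R} I_1(R)$, substitution yields
\[
I_0^2(R) - \frac{2\, I_0(R) I_1(R)}{R} < I_1^2(R), \qquad R > 0.
\]
Dividing through by $I_0^2(R) > 0$ and using \re{4.52} once more, we arrive at
\[
\frac{I_1^2(R_*^0)}{I_0^2(R_*^0)} \;>\; 1 - \frac{2 I_1(R_*^0)}{R_*^0 I_0(R_*^0)} \;=\; 1 - \tilde{\sigma}.
\]
Hence $A(R_*^0) < 0$ for every $\mu > 0$. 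Setting $\delta \triangleq -A(R_*^0) > 0$ then gives $|\rho_0^0(t)| = |\rho_0^0(0)|\, e^{-\delta t}$ for all $t > 0$, which is even sharper than the claimed inequality.

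I do not anticipate a real obstacle: the entire argument is a one-line reduction of \re{4.66} followed by the elementary Bessel inequality above, and every ingredient (the stationary relation \re{4.52}, the Turán-type inequality \re{4.26}, and the three-term recurrence \re{4.25}) is already at our disposal. The only modest care required is to note that $\tilde{\sigma} = \frac{2I_1(R_*^0)}{R_*^0 I_0(R_*^0)}$ at the stationary radius, so that the quantity $1 - \tilde{\sigma}$ appearing in the manipulation genuinely coincides with the middle term in $A(R_*^0)$; this is precisely the content of \re{4.52}.
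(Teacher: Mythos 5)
Your proposal is correct, and the reduction is identical to the paper's: set $n=0$ in \re{4.66}, note that the cubic term vanishes, and show that the remaining coefficient $1-\frac{2I_1(x)}{xI_0(x)}-\frac{I_1^2(x)}{I_0^2(x)}$ is negative. The only divergence is in how that inequality is settled. The paper simply cites it as equivalent to (3.22) of \cite{Huang}, whereas you prove it on the spot: by \re{4.25} at $n=1$ one has $\frac{2I_0(x)I_1(x)}{x}=I_0^2(x)-I_0(x)I_2(x)$, so after multiplying through by $I_0^2(x)>0$ the quantity in question equals $I_0(x)I_2(x)-I_1^2(x)$, which is negative by the Tur\'an-type inequality \re{4.26} at $n=1$. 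This two-line derivation from the identities already collected in Section 4.0 makes the lemma self-contained and is arguably preferable to the external citation; the intermediate rewriting of $\frac{2I_1(R_*^0)}{R_*^0 I_0(R_*^0)}$ as $\tilde{\sigma}$ via \re{4.52} is cosmetic and not needed, and your argument in fact establishes the inequality for all $x>0$, not just at $x=R_*^0$, matching the scope of \re{4.67}.
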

\begin{proof}
When $n=0$, $\frac{n(n^2-1)}{(R_*^0)^3} = 0$, \re{4.66} becomes
$$\rho_0^0(t) = \rho_0^0(0)\exp\Big\{ \Big[ 1 - \frac{2 I_1(R_*^0)}{R_*^0I_0(R_*^0)}  - \frac{I_1^2(R_*^0)}{I_0^2(R_*^0)} \Big] \mu t\Big\}.$$
It suffices to show
\begin{equation}\label{4.67}
    1-\frac{2I_1(x)}{xI_0(x)}-\frac{I_1^2(x)}{I_0^2(x)} < 0, \hspace{2em} \text{for } x>0.
\end{equation}
This inequality is equivalent to (3.22) in \cite{Huang}, which has been established already.
\end{proof}
\begin{rem}\label{thmthm4.1}
$n=0$ represents radially-symmetric perturbations. Indeed, in this case
$$r = R_* + \epsilon \rho_0(t) = R_*^0 + \epsilon \rho_0^0(t) + \tau(R_*^1 + \epsilon\rho_0^1(t))+O(\tau^2).$$
When $\tau$ is small, we do not expect the first-order to have a major contribution, and the above result
is just another indication that the stability discussed in  \cite{delay1} is valid for all $\mu$.
\end{rem}

\begin{lem}\label{thm4.2}
For $n=1$ and any $\mu>0$, we have $\rho_1^0(t) = \rho_1^0(0)$, for all $t>0$.
\end{lem}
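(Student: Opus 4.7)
The plan is straightforward: substitute $n=1$ directly into the explicit formula \re{4.66} and verify that the exponent vanishes identically. First, observe that the curvature-type term $\frac{n(n^2-1)}{(R_*^0)^3}$ is automatically $0$ when $n=1$, so only the $\mu$-coefficient remains to be analyzed. With $n=1$, this coefficient simplifies to
\[
\mu\Big( 1 - \frac{2 I_1(R_*^0)}{R_*^0I_0(R_*^0)} - \frac{I_1(R_*^0)I_{2}(R_*^0)}{I_0(R_*^0)I_1(R_*^0)}\Big)
= \frac{\mu}{I_0(R_*^0)}\Big( I_0(R_*^0) - \frac{2 I_1(R_*^0)}{R_*^0} - I_2(R_*^0)\Big).
\]

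The key step is then to apply the recurrence relation \re{4.25} with $n=1$, namely $I_0(\xi) - I_2(\xi) = \frac{2}{\xi} I_1(\xi)$, which forces the expression in parentheses to vanish for every positive value of $R_*^0$. Consequently the entire exponent in \re{4.66} is zero, and $\rho_1^0(t) \equiv \rho_1^0(0)$ for all $t > 0$ and for any $\mu > 0$.

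I do not anticipate any real obstacle in the argument; the result is a direct computation once the Bessel identity is invoked. Conceptually, this is exactly what one should expect, since the $n=1$ mode corresponds to infinitesimal translations of the origin, and the system \re{1.10}--\re{1.17} is translation invariant, so a pure $n=1$ perturbation must persist without decay or growth. This also explains why the statement of Theorem~\ref{thm1.2} removes the $a_1 \cos(\theta) + b_1 \sin(\theta)$ component before asserting exponential decay.
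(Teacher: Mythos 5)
Your proposal is correct and follows exactly the paper's own argument: substitute $n=1$ into \re{4.66}, note that $\frac{n(n^2-1)}{(R_*^0)^3}$ vanishes and that the remaining coefficient reduces to $I_0(R_*^0)-\frac{2}{R_*^0}I_1(R_*^0)-I_2(R_*^0)=0$ by the recurrence \re{4.25}. The added remark on translation invariance matches the paper's own interpretation of the $n=1$ mode.
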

\begin{proof}
When $n=1$, $\frac{n(n^2-1)}{(R_*^0)^3} = 0$, by \re{4.66}, we have
\begin{equation*}
        \rho_1^0(t) = \rho_1^0(0)\exp\Big\{ \Big[ 1 - \frac{2 I_1(R_*^0)}{R_*^0I_0(R_*^0)}  - \frac{I_2(R_*^0)}{I_0(R_*^0)} \Big] \mu t\Big\}
            = \rho_1^0(0),
\end{equation*}
since $I_0(x)-I_2(x) = \frac{2}{x}I_1(x)$ by \re{4.25}.
\end{proof}

\begin{lem}\label{thmthm4.3}
For $n\ge 2$,
\begin{equation}\label{4.68}
    1-\frac{2I_1(x)}{x I_0(x)} - \frac{I_1(x)I_{n+1}(x)}{I_0(x)I_n(x)} > 0,\hspace{2em} x>0.
\end{equation}
\end{lem}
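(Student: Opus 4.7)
The plan is to clear denominators in \re{4.68} and reduce the inequality to a clean comparison of products of modified Bessel functions. Multiplying through by the strictly positive quantity $x I_0(x) I_n(x)$, \re{4.68} is equivalent to
\[
xI_0(x)I_n(x) - 2I_1(x)I_n(x) - xI_1(x)I_{n+1}(x) > 0.
\]
The key algebraic observation is that the recursion \re{4.25} applied with index $n=1$ gives $xI_0(x) - 2I_1(x) = xI_2(x)$. Substituting this in, the desired statement collapses to
\[
I_2(x)I_n(x) > I_1(x)I_{n+1}(x) \qquad \text{for all } n \geq 2,\; x > 0,
\]
or equivalently $I_{n+1}(x)/I_n(x) < I_2(x)/I_1(x)$.

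To finish I would invoke the Tur\'an-type inequality \re{4.26}, which reads $I_{k-1}(x)I_{k+1}(x) < I_k^2(x)$ for $k\geq 1$, and is equivalent to the strict monotonicity
\[
\frac{I_{k+1}(x)}{I_k(x)} < \frac{I_k(x)}{I_{k-1}(x)}, \qquad x > 0.
\]
Thus for each fixed $x > 0$, the map $k \mapsto I_{k+1}(x)/I_k(x)$ is strictly decreasing in $k$. Chaining this monotonicity from $k = n$ down to $k = 2$ yields
\[
\frac{I_{n+1}(x)}{I_n(x)} \leq \frac{I_3(x)}{I_2(x)} < \frac{I_2(x)}{I_1(x)} \qquad \text{for every } n \geq 2,
\]
which is exactly what is required (for $n=2$ no chaining is needed: it is the single application of \re{4.26} with $k=2$).

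No serious obstacle is anticipated. The one nontrivial step is spotting that \re{4.25} at index $1$ bundles $xI_0 - 2I_1$ into $xI_2$, turning the rather opaque three-term expression in \re{4.68} into a monotonicity statement for the ratios $I_{k+1}/I_k$, after which \re{4.26} handles the rest. In particular, this proof makes no use of \re{4.27} or the series identity \re{4.28}, which will presumably be reserved for later, sharper estimates in the stability analysis.
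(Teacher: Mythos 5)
Your argument is correct and complete. The reduction is clean: multiplying \re{4.68} by $xI_0(x)I_n(x)>0$ gives $xI_0I_n-2I_1I_n-xI_1I_{n+1}>0$, and \re{4.25} with index $1$ (i.e.\ $I_0-I_2=\tfrac{2}{x}I_1$) turns $xI_0-2I_1$ into $xI_2$, so the claim becomes $I_2(x)I_n(x)>I_1(x)I_{n+1}(x)$. The Tur\'an inequality \re{4.26} says precisely that $k\mapsto I_{k+1}(x)/I_k(x)$ is strictly decreasing, and chaining it from $k=n$ down to $k=2$ (a single application when $n=2$) gives $I_{n+1}/I_n< I_2/I_1$, which is the required inequality; positivity of $I_k(x)$ for $x>0$, needed for the divisions, is immediate from \re{4.21}. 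Where you differ from the paper: the paper does not prove this lemma at all, but simply cites Lemma~3.3 of \cite{Huang}, so your argument is a genuinely self-contained alternative that uses only the Bessel identities \re{4.25} and \re{4.26} already collected in Section~4.0. That is a real gain in readability, since the reader need not consult \cite{Huang}; the only caveat is that \re{4.26} is itself quoted without proof, so the argument still ultimately rests on a standard Tur\'an-type inequality taken from the literature.
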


The proof of this lemma can be found in \cite{Huang} (Lemma 3.3). For $n\ge 2$, we define $\mu_n^0$ to be the solution to
$$\mu_n^0 \Big( 1 - \frac{2 I_1(R_*^0)}{R_*^0I_0(R_*^0)} - \frac{I_1(R_*^0)I_{n+1}(R_*^0)}{I_0(R_*^0)I_n(R_*^0)}\Big) - \frac{n(n^2-1)}{(R_*^0)^3} = 0, $$
that is,
\begin{equation}\label{4.69}
    \mu^0_n = \frac{\frac{n(n^2-1)}{(R_*^0)^3}}{1-\frac{2I_1(R_*^0)}{R_*^0 I_0(R_*^0)} - \frac{I_1(R_*^0)I_{n+1}(R_*^0)}{I_0(R_*^0)I_n(R_*^0)}}.
\end{equation}
Lemma \ref{thmthm4.3} implies that  $\mu^0_n > 0$. We then have the following lemma.

\begin{lem}\label{thmthm4.4}
For $n\ge 2$, $\mu_n^0 < \mu_{n+1}^0$.
\end{lem}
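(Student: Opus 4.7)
The plan is to reduce the strict inequality $\mu_n^0 < \mu_{n+1}^0$ to a clean inequality among ratios of modified Bessel functions, and then close using the Tur\'an-type identities listed in Section~4.0. Set $R = R_*^0$ and, for brevity,
\[
D_n \; := \; 1 - \frac{2I_1(R)}{R\,I_0(R)} - \frac{I_1(R)I_{n+1}(R)}{I_0(R)I_n(R)},
\]
so that $\mu_n^0 = n(n^2-1)/D_n$ by \re{4.69}. By Lemma~\ref{thmthm4.3}, both $D_n$ and $D_{n+1}$ are strictly positive. Cross-multiplying and cancelling the common factor $n(n+1)>0$ shows that $\mu_n^0 < \mu_{n+1}^0$ is equivalent to $(n+2)D_n - (n-1)D_{n+1} > 0$, which, after substituting the definitions and clearing denominators by multiplying through by $R\,I_0 I_n I_{n+1} > 0$, becomes
\[
3R\,I_0 I_n I_{n+1} \;-\; 6 I_1 I_n I_{n+1} \;-\; (n+2)R\,I_1 I_{n+1}^2 \;+\; (n-1) R\,I_1 I_n I_{n+2} \;>\; 0.
\]

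My next step is to invoke the recurrence \re{4.25} twice. First I would substitute $I_{n+2} = I_n - \tfrac{2(n+1)}{R}I_{n+1}$ to eliminate $I_{n+2}$ from the last term; then, after dividing through by $R\,I_1 I_n I_{n+1}$, I would use \re{4.25} again in the forms $\tfrac{I_0}{I_1} = \tfrac{I_2}{I_1} + \tfrac{2}{R}$ and $\tfrac{I_n}{I_{n+1}} = \tfrac{I_{n+2}}{I_{n+1}} + \tfrac{2(n+1)}{R}$. A short calculation shows that all $1/R$ contributions cancel exactly (through the identity $6 - 2(n^2+2) + 2(n-1)(n+1) = 0$), leaving the pleasingly clean equivalent form
\[
3\,\frac{I_2(R)}{I_1(R)} \;+\; (n-1)\,\frac{I_{n+2}(R)}{I_{n+1}(R)} \;>\; (n+2)\,\frac{I_{n+1}(R)}{I_n(R)}.
\]
Writing $r_k := I_{k+1}(R)/I_k(R)$, this reduces further to the combinatorial assertion
\[
3(r_1 - r_n) \;>\; (n-1)(r_n - r_{n+1}).
\]

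The Tur\'an inequality \re{4.26} immediately gives $r_{k+1} < r_k$ for every $k$, so $\{r_k\}$ is strictly decreasing and both sides of the target are positive. To close the argument, I would establish the stronger claim that $\{r_k\}$ is in fact \emph{convex} in $k$, i.e.\ that the gaps $r_k - r_{k+1}$ are non-increasing. Granting this, every summand in the telescoping expansion $r_1 - r_n = \sum_{k=1}^{n-1}(r_k - r_{k+1})$ is at least $r_n - r_{n+1}$, so $r_1 - r_n \ge (n-1)(r_n - r_{n+1})$; the conclusion then holds with an extra factor of $3$ to spare. The main obstacle is the convexity claim itself, which \re{4.26} alone does not deliver: I anticipate obtaining it by combining the sharpened Tur\'an inequality \re{4.27} with the recurrence $r_{k-1} = R/(R r_k + 2k)$ (obtained from \re{4.25} by dividing by $I_k$), which converts convexity $r_{k-1} + r_{k+1} \ge 2 r_k$ into a polynomial inequality in $r_k$, $R$, and $k$ that (4.27) is precisely strong enough to control.
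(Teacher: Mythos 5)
Your reduction is correct and coincides exactly with the paper's: positivity of the denominators (Lemma \ref{thmthm4.3}) justifies cross-multiplying, the factor $n(n+1)$ cancels, and the target becomes $(n+2)D_n-(n-1)D_{n+1}>0$; your ``clean equivalent form'' $3\,I_2/I_1+(n-1)I_{n+2}/I_{n+1}>(n+2)I_{n+1}/I_n$ is precisely the paper's inequality \re{4.70} after one application of \re{4.25} (to replace $I_0/I_1$ by $I_2/I_1+2/x$) and division by $x$. The divergence is in how that Bessel inequality is disposed of: the paper closes by citing \cite{Huang}, where \re{4.70} is established, whereas you attempt a self-contained proof via the assertion that $k\mapsto r_k=I_{k+1}(R)/I_k(R)$ is convex.

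That convexity assertion is where your argument has a genuine gap: you do not prove it, you only ``anticipate'' proving it, and the route you sketch is not strong enough. Writing $x=r_k$, $a=2k/R$, $b=2(k+1)/R$, the recurrence \re{4.25} gives $r_{k-1}=1/(x+a)$ and $r_{k+1}=1/x-b$, so convexity at $k$ reads $f(x):=\frac{1}{x+a}+\frac1x-2x-b\ge 0$. The Tur\'an inequalities \re{4.26} and \re{4.27} translate, via the same recurrence, into exactly $x(x+a)<1$ and $x(x+b)>1$, i.e.\ $x_-<x<x_+$ with $x_\pm$ the positive roots of those two quadratics. But $f$ is strictly decreasing on $(0,\infty)$, with $f(x_+)=a-b=-2/R<0$ and $f(x_-)=\frac{1}{x_-+a}-x_->0$, so on the interval $(x_-,x_+)$ the expression $r_{k-1}-2r_k+r_{k+1}$ takes both signs as far as \re{4.26}--\re{4.27} can tell; these two inequalities cannot, by themselves, pin down its sign, let alone prove it nonnegative. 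Thus the crux of the lemma is left unestablished (the remainder of your argument --- the telescoping $r_1-r_n\ge (n-1)(r_n-r_{n+1})$ granted convexity, with the factor $3$ to spare --- is fine). To finish along your lines you would need genuinely sharper two-sided bounds on $I_{k+1}/I_k$ than the paper records; the paper sidesteps this by invoking the proof of \re{4.70} given in \cite{Huang}.
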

\begin{proof}
By \re{4.69}, we only need to establish the inequality
\begin{equation*}
    \frac{\frac{n(n^2-1)}{x^3}}{1-\frac{2I_1(x)}{x I_0(x)} - \frac{I_1(x)I_{n+1}(x)}{I_0(x)I_n(x)}} < \frac{\frac{(n+1)[(n+1)^2-1]}{x^3}}{1-\frac{2I_1(x)}{x I_0(x)} - \frac{I_1(x)I_{n+2}(x)}{I_0(x)I_{n+1}(x)}}, \hspace{2em} x>0.
\end{equation*}
Using Lemma \ref{thmthm4.3}, it suffices to show
\begin{equation}\label{4.70}
    (n+2)x\frac{I_{n+1}(x)}{I_n(x)}-(n-1)x\frac{I_{n+2}(x)}{I_{n+1}(x)}-3x\frac{I_0(x)}{I_1(x)} + 6<0,\hspace{2em} x>0.
\end{equation}
The above inequality has been established in \cite{Huang}. The proof is complete.
\end{proof}

Since Lemmas \ref{thm4.1} and \ref{thm4.2} are valid for all $\mu$, we define $\mu_0^0 = \mu^0_1 = \infty$. And set
\begin{equation}\label{4.71}
    \mu_* = \min\{\mu_0^0, \mu_1^0, \mu_2^0, \mu_3^0, \cdots\}.
\end{equation}
Then by  Lemma \ref{thmthm4.4},
\begin{equation}\label{4.72}
    \mu_* = \mu^0_2.
\end{equation}
Combining Lemmas \ref{thmthm4.3},  \ref{thmthm4.4} and equation \re{4.72}, it is easy to derive the following result.

\begin{lem}\label{thm4.5}
For $n \ge 2$ and $\mu < \mu_*$, there exists $\delta >0$ such that
\begin{equation}\label{4.73}
    |\rho_n^0(t)| \le |\rho_n^0(0)|e^{-\delta n^3 t}, \text{ for all } t>0,
\end{equation}
where $\delta$ is independent of $n$.
\end{lem}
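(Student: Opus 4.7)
The plan is to exploit the explicit exponential solution \eqref{4.66}. Writing
\[
\rho_n^0(t)=\rho_n^0(0)\,e^{E_n t}, \qquad
E_n:=\mu A_n-\frac{n(n^2-1)}{(R_*^0)^3},
\]
where $A_n:=1-\dfrac{2I_1(R_*^0)}{R_*^0 I_0(R_*^0)}-\dfrac{I_1(R_*^0)I_{n+1}(R_*^0)}{I_0(R_*^0)I_n(R_*^0)}$, it suffices to prove that $E_n\le -\delta n^3$ for all $n\ge 2$, with $\delta>0$ independent of $n$. Taking absolute values will then deliver \eqref{4.73}.

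The first step is to re-express $E_n$ in a form that exposes the sign. By the defining relation \eqref{4.69} we have $\mu_n^0 A_n=n(n^2-1)/(R_*^0)^3$, so
\[
E_n=(\mu-\mu_n^0)\,A_n .
\]
Lemma \ref{thmthm4.3} gives $A_n>0$, and Lemma \ref{thmthm4.4} combined with \eqref{4.72} yields $\mu_n^0\ge \mu_2^0=\mu_*$ for all $n\ge 2$. Since $\mu<\mu_*$ we obtain $E_n<0$ for every $n\ge 2$; however this pointwise strict negativity is not yet strong enough for a uniform rate of order $n^3$.

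The key remaining step is to analyze $E_n/n^3$ as $n\to\infty$. Using the series representation \eqref{4.21} (or, equivalently, the classical asymptotic $I_n(x)\sim (x/2)^n/n!$ for large $n$), the ratio $I_{n+1}(R_*^0)/I_n(R_*^0)$ tends to $0$. Combined with the steady-state identity \eqref{4.52}, which reads $2I_1(R_*^0)/(R_*^0 I_0(R_*^0))=\tilde{\sigma}$, one concludes $A_n\to 1-\tilde{\sigma}>0$. Hence $\{A_n\}_{n\ge 2}$ is bounded and bounded below by a positive constant, and
\[
\frac{E_n}{n^3}=\frac{\mu A_n}{n^3}-\frac{n^2-1}{n^2\,(R_*^0)^3}\;\longrightarrow\; -\frac{1}{(R_*^0)^3}<0 \qquad (n\to\infty).
\]
Together with $E_n/n^3<0$ for each fixed $n\ge 2$, this forces $\delta:=-\sup_{n\ge 2}E_n/n^3>0$; then $E_n\le -\delta n^3$ for every $n\ge 2$, which is the desired uniform decay rate.

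The only subtle point is verifying that $A_n$ is uniformly bounded below away from zero; all other pieces are either direct consequences of Lemmas \ref{thmthm4.3}--\ref{thmthm4.4} or of standard Bessel-function limits. This lower bound follows immediately once we show $I_{n+1}(R_*^0)/I_n(R_*^0)\to 0$, which is a routine consequence of \eqref{4.21}: the ratio is dominated by $R_*^0/(2(n+1))\cdot (1+o(1))$. With that in hand the argument is otherwise straightforward, and no refinement of the formula \eqref{4.66} is required.
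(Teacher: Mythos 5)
Your proof is correct and follows essentially the same strategy as the paper: establish strict negativity of the exponent for each fixed $n\ge 2$ (which the paper gets from the definition of $\mu_*$ and Lemmas \ref{thmthm4.3}--\ref{thmthm4.4}, and you get via the clean factorization $E_n=(\mu-\mu_n^0)A_n$), then observe that for large $n$ the term $-n(n^2-1)/(R_*^0)^3$ dominates so that $E_n/n^3$ stays bounded away from $0$, and finally take the worst constant over the finitely many remaining modes. Your treatment of the large-$n$ regime (showing $A_n\to 1-\tilde\sigma>0$ via $I_{n+1}(R_*^0)/I_n(R_*^0)\to 0$) is somewhat more explicit than the paper's, but the argument is the same.
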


\begin{proof}
Since $\mu < \mu_*$, there exists $\delta_1 >0$ independent of $n$ such that
\begin{equation*}
    \mu\Big( 1 - \frac{2 I_1(R_*^0)}{R_*^0I_0(R_*^0)} - \frac{I_1(R_*^0)I_{n+1}(R_*^0)}{I_0(R_*^0)I_n(R_*^0)}\Big) - \frac{n(n^2-1)}{(R_*^0)^3} < -\delta_1 n^3
\end{equation*}
is valid for $n$ sufficiently large, i.e., $n>n_0$. On the other hand, for each $n \in [2, n_0]$, there exists a corresponding $\delta_n > 0$ such that
\begin{equation*}
    \mu\Big( 1 - \frac{2 I_1(R_*^0)}{R_*^0I_0(R_*^0)} - \frac{I_1(R_*^0)I_{n+1}(R_*^0)}{I_0(R_*^0)I_n(R_*^0)}\Big) - \frac{n(n^2-1)}{(R_*^0)^3} < -\delta_n n^3.
\end{equation*}
Choosing $0< \delta < \min\{ \delta_1, \delta_2, \cdots, \delta_{n_0} \}$, we thus have
\begin{equation*}
    \mu\Big( 1 - \frac{2 I_1(R_*^0)}{R_*^0I_0(R_*^0)} - \frac{I_1(R_*^0)I_{n+1}(R_*^0)}{I_0(R_*^0)I_n(R_*^0)}\Big) - \frac{n(n^2-1)}{(R_*^0)^3} < -\delta n^3
\end{equation*}
holds for all $n\ge 2$. Therefore, it follows from \re{4.66} that
\begin{align*}
    |\rho_n^0(t)| &=|\rho_n^0(0)|\exp\Big\{ \Big [\mu\Big( 1 - \frac{2 I_1(R_*^0)}{R_*^0I_0(R_*^0)} - \frac{I_1(R_*^0)I_{n+1}(R_*^0)}{I_0(R_*^0)I_n(R_*^0)}\Big) - \frac{n(n^2-1)}{(R_*^0)^3}\Big ] t \Big\} \\
    &\le |\rho_n^0(0)| e^{-\delta n^3 t},\quad \text{for all }t>0.\qedhere
\end{align*} \end{proof}

\begin{rem}\label{thmthm4.2}
Lemma \ref{thm4.5} indicates that when the time delay $\tau$ is small enough, and tumor proliferation intensity $\mu$ is smaller than a critical value (i.e., $\mu < \mu_*$), then the stationary solution $(\sigma_*,p_*,R_*)$ is linearly stable even under non-radially symmetric perturbations. However, in contrast to the result in \cite{delay1},
we showed that the system is unstable with respect to perturbation when
$ \mu > \mu_*$. As indicated earlier, the instability comes from $n=2$ mode, which does not
contradict the result in \cite{delay1}.
\end{rem}

\subsection{Sign of $R_*^1$}
In 4.1, we have derived the equation for $R_*^1$. Since $R_* = R_*^0 + \tau R_*^1 + O(\tau^2)$, we would like to know  how the time delay $\tau$ affects the size of the tumor $R_*$, thus we are interested in the sign of $R_*^1$.

\begin{thm}\label{thmthmthm4.1}
$R_*^1 > 0$, and $R_*^1$ is monotone increasing in $\mu$.
\end{thm}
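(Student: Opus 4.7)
The plan is to solve equation \re{4.39} explicitly for $R_*^1$ and then analyze the sign of both sides. First I would substitute the explicit formula \re{4.30} for $\sigma_*^1$ into \re{4.39}, using the identity $\int_0^{R_*^0} I_0(r)\, r\, \dif r = R_*^0 I_1(R_*^0)$ (which follows from \re{4.24} with $n=0$), to compute
\[
\int_0^{R_*^0}\sigma_*^1(r)\, r\, \dif r = -\frac{R_*^0 I_1^2(R_*^0)}{I_0^2(R_*^0)}R_*^1.
\]
Moving this term to the left of \re{4.39}, I would then simplify the coefficient of $R_*^1$ by eliminating $\tilde\sigma$ via \re{4.52} and using the Bessel identity $I_0+I_2 = 2I_1' = 2(I_0 - I_1/R)$ (from \re{4.22}--\re{4.25}). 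After cancellations the coefficient collapses to
\[
R_*^0 \Big(1 - \frac{2I_1(R_*^0)}{R_*^0 I_0(R_*^0)} - \frac{I_1^2(R_*^0)}{I_0^2(R_*^0)}\Big),
\]
which is strictly negative by \re{4.67}.

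Next I would analyze the right-hand side, namely $-\int_0^{R_*^0}\partial_r\sigma_*^0\cdot\partial_r p_*^0\cdot r\,\dif r$. Clearly $\partial_r\sigma_*^0(r) = I_1(r)/I_0(R_*^0)>0$ for $r>0$. The key observation is that integrating \re{4.53} from $0$ to $r$ yields
\[
-r\frac{\partial p_*^0}{\partial r}(r) = \mu\int_0^r [\sigma_*^0(s)-\tilde\sigma]\, s\, \dif s,
\]
and this last integral starts at $0$, decreases on $(0,r_0)$ where $\sigma_*^0<\tilde\sigma$, then increases on $(r_0,R_*^0)$, returning to $0$ at $R_*^0$ exactly by \re{4.52}. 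Hence the integral is non-positive on $[0,R_*^0]$, so $\partial_r p_*^0 \ge 0$ with strict positivity on $(0,R_*^0)$. Therefore the integrand $\partial_r\sigma_*^0\cdot\partial_r p_*^0\cdot r$ is strictly positive on $(0,R_*^0)$, making the right-hand side strictly negative. Combining with the negative coefficient obtained above yields $R_*^1>0$.

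For the monotonicity in $\mu$, I would exploit two clean observations. First, equation \re{4.52} determines $R_*^0$ with no reference to $\mu$, so the bracket $R_*^0(1-2I_1/(R_*^0 I_0)-I_1^2/I_0^2)$ is independent of $\mu$. Second, the explicit formula \re{4.57} shows that $p_*^0$, and hence $\partial_r p_*^0$, is linear in $\mu$, while $\partial_r\sigma_*^0$ is $\mu$-independent. Thus the right-hand side is proportional to $\mu$, and solving for $R_*^1$ yields an expression of the form $R_*^1 = \mu\cdot C(R_*^0,\tilde\sigma)$ with $C>0$ a quantity independent of $\mu$. This immediately gives strict monotonicity of $R_*^1$ in $\mu$.

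The computations are essentially routine once the Bessel identities are marshaled correctly; the only place that requires genuine care is the simplification of the coefficient of $R_*^1$, where one must track the cancellation that lets \re{4.67} do the work. The sign of $\partial_r p_*^0$ is a standard steady-state argument, so I would state it briefly rather than belaboring it.
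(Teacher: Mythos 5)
Your proof is correct, and it diverges from the paper's in one substantive way. The paper evaluates every integral in \re{4.39} in closed form via the Bessel recurrences, arriving at the identity $R_*^1 A(R_*^0) = \mu B(R_*^0)$ (equations \re{4.76}--\re{4.79}), and then invokes the two Tur\'an-type inequalities \re{4.26} and \re{4.27} to conclude $A<0$ and $B<0$. Your treatment of the coefficient of $R_*^1$ is the same computation in lightly different clothing: your collapsed coefficient $R_*^0\bigl(1-2I_1/(R_*^0I_0)-I_1^2/I_0^2\bigr)$ equals $\frac{R_*^0}{2I_0^2}A(R_*^0)$ by \re{4.25}, and citing \re{4.67} is equivalent to citing \re{4.26}. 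Where you genuinely depart is the right-hand side: instead of evaluating $\int_0^{R_*^0}\partial_r\sigma_*^0\,\partial_r p_*^0\,r\,\dif r$ explicitly and appealing to \re{4.27}, you prove $\partial_r p_*^0\ge 0$ by integrating \re{4.53} once and combining the zero-mean condition \re{4.38} with the monotonicity of $\sigma_*^0$. That argument is sound -- the function $g(r)=\int_0^r(\sigma_*^0-\tilde\sigma)s\,\dif s$ vanishes at both endpoints and its derivative changes sign exactly once, from negative to positive, so $g\le 0$ -- and it buys you the sign of the right-hand side without the inequality \re{4.27}; it is also more robust, in that it would survive modifications of the nutrient equation that preserve the monotonicity of $\sigma_*^0$. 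What it costs you is the explicit formula \re{4.79}, $R_*^1=\mu B(R_*^0)/A(R_*^0)$, which the paper reuses later (e.g.\ in the $n=1$ computation \re{4.133}--\re{4.134}); your softer argument still yields $R_*^1=\mu\,C$ with $C>0$ independent of $\mu$, which is all that the positivity and the monotonicity claims of this theorem require.
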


\begin{proof}
Substituting \re{4.29}, \re{4.30}, and \re{4.57} into \re{4.39}, recalling also the equality $\frac{\tilde{\sigma}}{2}=\frac{I_1(R_*^0)}{R_*^0 I_0(R_*^0)}$ from \re{4.52}, we obtain,
\begin{equation}\label{4.74}
\begin{split}
    R_*^1\Big[  &\frac{R_*^0(I_0(R_*^0)+I_2(R_*^0))}{2I_0(R_*^0)} - \frac{ I_1(R_*^0)}{I_0(R_*^0)}\Big] \\&+ \int_0^{R_*^0} \Big[\frac{I_1(r)}{I_0(R_*^0)}\Big(\frac{\mu I_1(R_*^0)}{R_*^0 I_0(R_*^0)}r - \mu\frac{I_1(r)}{I_0(R_*^0)}\Big) - \frac{I_0(r)I_1(R_*^0)}{I_0^2(R_*^0)}R_*^1\Big] r\dif r = 0.
    \end{split}
\end{equation}
It follows from \re{4.24} that
$$ r I_0(r) = \frac{\dif}{\dif r}(r I_1(r)), \hspace{2em} r^2 I_1(r) =\frac{\dif }{\dif r}(r^2 I_2(r)),$$
and by applying \re{4.22} and \re{4.25} we have
$$\frac{\dif}{\dif r}\Big[ \frac{1}{2}r^2(I_1^2(r) - I_0(r)I_2(r))\Big] = r I_1^2(r).$$
Using the above equations, we shall write the integral in \re{4.74} explicitly as
\begin{equation}\label{4.75}
    \begin{split}
        &\int_0^{R_*^0} \Big[\frac{I_1(r)}{I_0(R_*^0)}\Big(\frac{\mu I_1(R_*^0)}{R_*^0 I_0(R_*^0)}r - \mu\frac{I_1(r)}{I_0(R_*^0)}\Big) - \frac{I_0(r)I_1(R_*^0)}{I_0^2(R_*^0)}R_*^1\Big] r\dif r\\
        =&\; \frac{\mu I_1(R_*^0)}{R_*^0 I_0^2(R_*^0)}
        \int_0^{R_*^0} r^2 I_1(r)\dif r - \frac{\mu}{I_0^2(R_*^0)}\int_0^{R_*^0} r I_1^2(r)\dif r - \frac{I_1(R_*^0)R_*^1}{I_0^2(R_*^0)}\int_0^{R_*^0} r I_0(r)  \dif r\\
        =&\; \frac{\mu R_*^0 I_1(R_*^0)I_2(R_*^0)}{I_0^2(R_*^0)} - \frac{\mu (R_*^0)^2 I_1^2(R_*^0)}{2 I_0^2(R_*^0)} + \frac{\mu (R_*^0)^2 I_2(R_*^0)}{2I_0(R_*^0)} - \frac{R_*^0 I_1^2(R_*^0)}{I_0^2(R_*^0)}R_*^1\\
        =&\; \frac{\mu R_*^0}{2I_0^2(R_*^0)}  \Big(2I_1(R_*^0)I_2(R_*^0)- R_*^0 I_1^2(R_*^0) + R_*^0 I_0(R_*^0)I_2(R_*^0)\Big)- \frac{R_*^0 I_1^2(R_*^0)}{I_0^2(R_*^0)}R_*^1.
    \end{split}
\end{equation}
It then follows from \re{4.74} and \re{4.75} that
\begin{equation}\label{4.76}
\begin{split}
    R_*^1\frac{R_*^0}{2 I_0^2(R_*^0)} \Big[& -\frac{2I_0(R_*^0)I_1(R_*^0)}{R_*^0} +  I_0^2(R_*^0)  + I_0(R_*^0)I_2(R_*^0) - 2I_1^2(R_*^0)  \Big]\\
    &= \mu \frac{R_*^0}{2I_0^2(R_*^0)} \Big[ - 2I_1(R_*^0)I_2(R_*^0)+ R_*^0 I_1^2(R_*^0)  - R_*^0 I_0(R_*^0)I_2(R_*^0)\Big].
    \end{split}
\end{equation}
Let
$$A(x) = -\frac{2I_1(x)I_0(x)}{x} + I_0^2(x) + I_2(x)I_0(x) - 2 I_1^2(x) = 2(-I_1^2(x)+ I_0(x) I_2(x)),$$
$$B(x) = -2 I_1(x) I_2(x) + x I_1^2(x) - x I_2(x)I_0(x).$$
To determine the sign of $R_*^1$, we need to determine the signs of $A$ and $B$.

By \re{4.26} and \re{4.27}, it follows
$$I_0(x)I_2(x) < I_1^2(x),\hspace{2em} x>0,$$
$$I_0(x)I_2(x) > I_1^2(x) - \frac{2}{x}I_1(x)I_2(x), \hspace{2em} x>0,$$
hence
\begin{equation}\label{4.77}
    A(x) <0 ,\hspace{2em} x>0,
\end{equation}
\begin{equation}
    \label{4.78} B(x) < 0, \hspace{2em} x>0.
\end{equation}
From \re{4.76}
\begin{equation} \label{4.79}
    R_*^1 = \mu\frac{B(R_*^0)}{A(R_*^0)},
\end{equation}
so we can directly derive $R_*^1 > 0$ by using \re{4.77} and \re{4.78}. Furthermore, it is easy to tell that $R_*^1$ is monotone increasing in $\mu$.\end{proof}

\begin{rem}
Since $R_*^1 >0$, adding the time delay to the system would result in a larger stationary tumor.
It is pretty reasonable because compared with models without time delay, there is more time for the tumor to grow in models with time delay. Theorem \ref{thmthmthm4.1} also indicates that the biger the tumor proliferation intensity $\mu$ is, the greater impact that time delay has on the size of the tumor.
\end{rem}

\subsection{first-order terms in $\tau$}
In what follows, we are going to tackle the system involving all the first-order terms in $\tau$, and we are more interested in the impact of time delay $\tau$ on our system.
We now collect first-order equations and their respective boundary conditions
from \re{4.30}, \re{4.33}, \re{4.35}, \re{4.39}, \re{4.41}, \re{4.46}, \re{4.48} and \re{4.50}:
\begin{gather}
   -\frac{\partial^2 \sigma_*^1}{\partial r^2} - \frac{1}{r}\frac{\partial\sigma_*^1}{\partial r}  = - \sigma_*^1, \m
   \sigma_*^1(R_*^0) = -\frac{ I_1(R_*^0)}{I_0(R_*^0)}R_*^1,\mm\mbox{i.e., } \m
   \sigma_*^1(r) = -\frac{I_0(r)I_1(R_*^0)}{I^2_0(R_*^0)}R_*^1,\label{4.80}\\
    \frac{R_*^1 I_1(R_*^0)}{I_0(R_*^0)} +  \frac{R_*^0(I_0(R_*^0)+I_2(R_*^0))}{2I_0(R_*^0)}R_*^1 -\tilde{\sigma}R_*^0 R_*^1 + \int_0^{R_*^0}\Big(\frac{\partial \sigma_*^0}{\partial r}(r)\frac{\partial p_*^0}{\partial r}(r)+\sigma_*^1(r)\Big)r\dif r = 0,\label{4.81}\\
    -\frac{\partial^2 p_*^1}{\partial r^2} - \frac{1}{r}\frac{\partial p_*^1}{\partial r} = \mu\frac{\partial \sigma_*^0}{\partial r}\frac{\partial p_*^0}{\partial r}+\mu\sigma_*^1,\hspace{2em} p_*^1(R_*^0)=-\frac{R_*^1}{(R_*^0)^2}-\frac{\partial p_*^0}{\partial r}(R_*^0)R_*^1,\label{4.82}\\
   \left\{ \begin{gathered}\label{4.83}
        -\frac{\partial^2 w_n^1}{\partial r^2}-\frac{1}{r}\frac{\partial w_n^1}{\partial r}+\Big (\frac{n^2}{r^2}+1\Big) w_n^1 = 0,\\
        w_n^1(R_*^0,t) = -\frac{\partial w_n^0}{\partial r}(R_*^0,t)R_*^1-\frac{\partial \sigma_*^0}{\partial r}(R_*^0)\rho_n^1(t) - \frac{\partial^2 \sigma_*^0}{\partial r^2}(R_*^0)R_*^1 \rho_n^0(t) -\frac{\partial \sigma_*^1}{\partial r}(R_*^0)\rho_n^0(t),
    \end{gathered} \right.
    \\
    \left\{\begin{gathered}\label{4.84}
    -\frac{\partial^2 q_n^1}{\partial r^2}-\frac{1}{r}\frac{\partial q_n^1}{\partial r}+ \frac{n^2}{r^2} q_n^1 = \mu\frac{\partial \sigma_*^0}{\partial r}\frac{\partial q_n^0}{\partial r} + \mu\frac{\partial w_n^0}{\partial r}\frac{\partial p_*^0}{\partial r}-\mu\frac{\partial w_n^0}{\partial t} + \mu w_n^1,\\
    q_n^1(R_*^0,t)=-\frac{\partial q_n^0}{\partial r}(R_*^0,t)R_*^1 + \frac{n^2-1}{(R_*^0)^2}\rho_n^1(t) - \frac{2(n^2-1)R_*^1}{(R_*^0)^3}\rho_n^0(t),
    \end{gathered} \right.
\end{gather}
\vspace{-12pt}
\begin{equation}
    \begin{split}\label{4.85}
    \frac{\dif \rho_n^1(t)}{\dif t}=&-\frac{\partial^2 p_*^0}{\partial r^2}(R_*^0)\rho_n^1(t)-\frac{\partial^3 p_*^0}{\partial r^3}(R_*^0)R_*^1 \rho_n^0(t)\\
    &-\frac{\partial^2 p_*^1}{\partial r^2}(R_*^0)\rho_n^0(t)-\frac{\partial^2 q_n^0}{\partial r^2}(R_*^0,t)R_*^1 - \frac{\partial q_n^1}{\partial r}(R_*^0,t).
    \end{split}
\end{equation}

In \re{4.85}, $p_*^0$ and $q_n^0$ are already computed, we only need to compute $\frac{\partial^2 p_*^1}{\partial r^2}(R_*^0)$ and $\frac{\partial q_n^1}{\partial r}(R_*^0, t)$. Integrating \re{4.82} over $(0,r)$ with $r \dif r$, we obtain
\[
 \frac{\p p^1_*}{\p r} (r) = -\; \frac{\mu}r \int_0^r  \Big(\frac{\partial \sigma_*^0}{\partial r}(y)
  \frac{\partial p_*^0}{\partial r}(y)+ \sigma_*^1(y) \Big) y \dif y.
\]
We then substitute the expressions of $\s_*^0$ from \re{4.51}, $p_*^0$ from \re{4.57}, and $\s_*^1$ from \re{4.80} into the
above equality to derive
\begin{equation}\label{4.86}
    \begin{split}
     \frac{\p p^1_*}{\p r} (r) =&
     -\; \frac{\mu}r \int_0^r  \Big[ \frac{\mu I_1(y)}{I_0(R_*^0)}\Big(\frac12 \t\s y -  \frac{I_1(y)}{I_0(R_*^0)}
     \Big)  - \frac{I_0(y) I_1(R^0_*)}{I_0^2(R_*^0)} R_*^1\Big] y dy.
    \end{split}
\end{equation}
Since, by \re{4.22} and \re{4.23},
$$\frac\dif{\dif r} \Big(r^2 I_0(r) -2rI_1(r)\Big) = r^2 I_1(r),$$
$$\frac\dif{\dif r} \Big( \frac{r^2(I_1^2(r)-I_0^2(r))}2 + r I_0(r)I_1(r) \Big) = r I_1^2(r),$$
$$\frac\dif{\dif r} \Big(r  I_1(r)\Big) = r I_0(r),$$
the integral in \re{4.86} evaluates to
\[
    \begin{split}
        \frac{\p p^1_*}{\p r} (r)  =& \frac{\mu^2 \t\s}{2I_0(R_*^0)}[2I_1(r)-rI_0(r)]
        +  \frac{\mu^2  }{ I_0^2(R_*^0)}\Big( \frac{r(I_1^2(r)-I_0^2(r))}2 +  I_0(r)I_1(r) \Big) + \frac{\mu I_1(R_*^0)}{I_0^2(R_*^0)}R_*^1 I_1(r).
    \end{split}
    \]
Using \re{4.22}, \re{4.23}, and \re{4.25}, taking another derivative and evaluating at $R_*^0$, also recalling the equality $\frac{\tilde{\sigma}}{2} = \frac{I_1(R_*^0)}{R_*^0 I_0(R_*^0)}$ from \re{4.52}, we derive
\begin{equation}\label{4.87}
    \begin{split}
        \frac{\partial^2 p_*^1}{\partial r^2}(R_*^0) =&\;
         \frac{\mu^2 I_1(R_*^0)}{R_*^0 I_0^2(R_*^0)}\Big[- R_*^0 I_1(R_*^0) + I_2(R_*^0)
         \Big]+
         \frac{\mu^2}{2I_0^2(R_*^0)}\Big[I_0^2(R_*^0)-\frac{2I_0(R_*^0)I_1(R_*^0)}{R_*^0}\\ &\; + I_1^2(R_*^0)\Big]
         + \frac{\mu R_*^1 I_1(R_*^0)}{I_0^2(R_*^0)}\Big[I_0(R_*^0) - \frac{I_1(R_*^0)}{R_*^0}\Big].
    \end{split}
\end{equation}
We completed computation of $\frac{\partial^2 p_*^1}{\partial r^2}(R_*^0)$. We now proceed a long and tedious journey to compute $\frac{\partial q_n^1}{\partial r}(R_*^0,t)$. From \re{4.83}, $w_n^1(r,t)$ can be solved in the form
\begin{equation}\label{4.88}
w_n^1(r,t) = C_3(t)I_n(r).
\end{equation}
Substituting it into the boundary condition in \re{4.83}, using also \re{4.22}, \re{4.23}, \re{4.51}, \re{4.59}, and \re{4.80}, we derive
\begin{equation*}
    C_3(t) I_n(R_*^0) = \Big[\frac{I_1(R_*^0)I_{n+1}(R_*^0)}{I_0(R_*^0)I_n(R_*^0)} + \frac{(n+1)I_1(R_*^0)}{R_*^0 I_0(R_*^0)} - 1+ \frac{I_1^2(R_*^0)}{I_0^2(R_*^0)}\Big] R_*^1 \rho_n^0(t) - \frac{I_1(R_*^0)}{I_0(R_*^0)}\rho_n^1(t).
\end{equation*}
Thus $C_3(t)$ is uniquely determined, and
\begin{equation}\label{4.89}
\begin{split}
    w_n^1(r,t) =& \; C_3(t)I_n(r) =  -\frac{I_1(R_*^0)I_n(r)}{I_0(R_*^0)I_n(R_*^0)}\rho_n^1(t) + \\
    &\Big[\frac{I_1(R_*^0)I_{n+1}(R_*^0)}{I_0(R_*^0)I_n(R_*^0)} + \frac{(n+1)I_1(R_*^0)}{R_*^0 I_0(R_*^0)} - 1+ \frac{I_1^2(R_*^0)}{I_0^2(R_*^0)}\Big]\frac{I_n(r)}{I_n(R_*^0)}R_*^1\rho_n^0(t).
    \end{split}
\end{equation}

As in the computation of $q_n^0$ and $w_n^0$, we let $\eta_n^1 = q_n^1 + \mu w_n^1$. Combining \re{4.83} and \re{4.84}, we find that $\eta_n^1$ satisfies
\begin{equation}\label{4.90}
    -\frac{\partial^2 \eta_n^1}{\partial r^2}-\frac{1}{r}\frac{\partial \eta_n^1}{\partial r}+\frac{n^2}{r^2}\eta_n^1 = \mu\frac{\partial \sigma_*^0}{\partial r}\frac{\partial q_n^0}{\partial r} + \mu\frac{\partial w_n^0}{\partial r}\frac{\partial p_*^0}{\partial r}-\mu\frac{\partial w_n^0}{\partial t},\quad \text{in }B_{R_*^0},
\end{equation}
with the boundary condition
\begin{equation}\label{4.91}
        \eta_n^1(R_*^0,t) = q_n^1(R_*^0,t) + \mu w_n^1(R_*^0,t).
\end{equation}

For simplicity, let us denote the differential operator by $L_n:= -\partial_{rr} -\frac{1}{r}\partial_r + \frac{n^2}{r^2}$, and rewrite the solution $\eta_n^1$ to \re{4.90} and \re{4.91} as $\eta_n^1 = u_n^{(1)} + u_n^{(2)} + u_n^{(3)} + u_n^{(4)}$, where $u_n^{(1)}, u_n^{(2)}, u_n^{(3)}$ and $u_n^{(4)}$ satisfy the following equations, respectively.
\begin{equation}\label{4.92}
    \left \{
    \begin{split}
        &L_n u_n^{(1)} = \mu \frac{\partial \sigma_*^0}{\partial r}\frac{\partial q_n^0}{\partial r},\quad \text{in }B_{R_*^0},\\
        &u_n^{(1)}(R_*^0,t) = 0;
    \end{split}
    \right.
\end{equation}
\begin{equation}\label{4.93}
    \left \{
    \begin{split}
        &L_n u_n^{(2)} = \mu \frac{\partial w_n^0}{\partial r}\frac{\partial p_*^0}{\partial r},\quad \text{in }B_{R_*^0},\\
        &u_n^{(2)}(R_*^0,t) = 0;
    \end{split}
    \right.
\end{equation}
\begin{equation}\label{4.94}
    \left \{
    \begin{split}
        &L_n u_n^{(3)} = -\mu  \frac{\partial w_n^0}{\partial t},\quad \text{in }B_{R_*^0},\\
        &u_n^{(3)}(R_*^0,t) = 0;
    \end{split}
    \right.
\end{equation}
\begin{equation}\label{4.95}
    \left \{
    \begin{split}
        &L_n u_n^{(4)} = 0,\quad \text{in }B_{R_*^0},\\
        &u_n^{(4)}(R_*^0,t) = q_n^1(R_*^0,t) + \mu w_n^1(R_*^0,t).
    \end{split}
    \right.
\end{equation}

We start by analyzing $u_n^{(1)}$. Substituting \re{4.51} and \re{4.62} into \re{4.92}, recalling also \re{4.23}, \re{4.59}, and \re{4.63}, we derive the explicit form of \re{4.92}, namely,
\begin{equation}\label{4.96}
    \begin{split}
        L_n u_n^{(1)} =&\mu \frac{\partial \sigma_*^0}{\partial r}\frac{\partial q_n^0}{\partial r} = \mu\frac{I_1(r)}{I_0(R_*^0)}\Big[C_1(t)n r^{n-1} + \frac{I_1(R_*^0)}{I_0(R_*^0) I_n(R_*^0)}\Big(I_{n+1}(r)+\frac{n}{r}I_n(r)\Big)\rho_n^0(t)\Big] \\
        =& \mu\frac{I_1(r)}{I_0(R_*^0)}\Big[\frac{n r^{n-1}}{(R_*^0)^n} \Big( \frac{n^2-1}{(R_*^0)^2} - \mu\frac{I_1(R_*^0)}{I_0(R_*^0)}\Big ) + \mu\frac{I_1(R_*^0)I_{n+1}(r)}{I_0(R_*^0) I_n(R_*^0)} + \mu\frac{n I_1(R_*^0)I_n(r)}{rI_0(R_*^0) I_n(R_*^0)}\Big] \rho_n^0(t).\\
    \end{split}
\end{equation}
Recalling the definition of Bessel function $I_n(r)$ in \re{4.21}, we have $\lim\limits_{r\rightarrow 0} \frac{I_1(r)}{r} = \frac12$ and  $\lim\limits_{r\rightarrow 0} \frac{I_n(r)}{r} = 0$ for $n\ge 2$, thus  the right hand side of \re{4.96} is less than $Q(n) \rho_n^0(t)$ when $0\le r < R_*^0$. Here $Q(n)$ is a polynomial function of $n$.

Since $\rho_n^0(t)$ has different behaviors under $n\ge 2$, $n=0$ and $n=1$, we divide the following procedures into three cases: (i) $n \ge 2$; (ii) $n = 0$; and (iii) $n=1$.

\vspace{15pt}

{\bf Case 1: When $n\ge 2$}\\

\vspace{-10pt}
For $n\ge 2$, we introduce the following lemma to estimate $u_n^{(1)}$.

\begin{lem}\label{thm4.6}
Consider the elliptic problem
\begin{eqnarray}
L_n w = -\frac{\partial^2 w}{\partial r^2} - \frac{1}{r}\frac{\partial w}{\partial r} + \frac{n^2}{r^2} w &=& b(r,t) \hspace{2em} \text{in } B_R,\label{4.97}\\
w|_{r=R} &=& 0,\label{4.98}
\end{eqnarray}
where $n \ge 2$.
If $b(\cdot,t)\in L^2(B_R)$, then this problem admits a unique solution $w$ in $H^2(B_R)$ with estimates
\begin{equation}\label{4.99}
    \|w(\cdot, t)\|_{H^2(B_R)} \le C\Big [\int_0^R |b(r,t)|^2 r \dif r\Big ]^{1/2};
\end{equation}
\begin{equation}\label{4.100}
    \Big| \frac{\partial w(R,t)}{\partial r}\Big | \le C\Big [\int_0^R |b(r,t)|^2 r \dif r\Big ]^{1/2},
\end{equation}
where the constant $C$ in \re{4.99} and \re{4.100} is independent of $n$.
\end{lem}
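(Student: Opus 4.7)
The plan is to lift this one-dimensional problem to the Dirichlet problem for $-\Delta$ on the two-dimensional disk $B_R$, where classical elliptic regularity yields both bounds with a constant that is manifestly independent of $n$. Concretely, I would define $W(x,t) := w(|x|,t)\cos(n\theta)$ in polar coordinates; using $\Delta = \partial_{rr} + r^{-1}\partial_r + r^{-2}\partial_{\theta\theta}$, a direct check shows $-\Delta W = (L_n w)\cos(n\theta) = b(r,t)\cos(n\theta)$ in $B_R$, with $W = 0$ on $\partial B_R$ because $w(R,t)=0$.

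Next I would apply standard $H^2$ regularity for the Dirichlet Laplacian on the smooth planar domain $B_R$,
\[
\|W(\cdot,t)\|_{H^2(B_R)} \le C_0\,\|b(|\cdot|,t)\cos(n\theta)\|_{L^2(B_R)},
\]
where $C_0$ depends only on $R$. Using $\int_0^{2\pi}\cos^2(n\theta)\,d\theta = \pi$, the right-hand side equals $\sqrt{\pi}\bigl(\int_0^R b^2\,r\,dr\bigr)^{1/2}$. Expanding $\nabla W$ and $\nabla^2 W$ in polar coordinates produces exactly the weighted radial quantities $(w')^2+n^2 w^2/r^2$ and their second-order analogues, so the inequality translates directly into \re{4.99} after the natural identification of $\|w(\cdot,t)\|_{H^2(B_R)}$ with the $H^2(B_R)$ norm of the $n$-mode lift. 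For the trace estimate \re{4.100}, I would apply the trace bound $\|\partial_r W\|_{L^2(\partial B_R)} \le C\|W\|_{H^2(B_R)}$ to $\partial_r W\big|_{r=R} = w'(R,t)\cos(n\theta)$, whose $L^2(\partial B_R)$ norm is $\sqrt{\pi R}\,|w'(R,t)|$, producing the bound. Existence and uniqueness follow from the 2D lift together with Lax--Milgram applied to the coercive bilinear form $\int_0^R\bigl((w')^2 + (n^2/r^2)w^2\bigr)\,r\,dr$ on the weighted Hilbert space with vanishing trace at $r=R$.

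The main point to be vigilant about is the uniformity of $C$ in $n$. Because the lifted operator $-\Delta$ and the domain $B_R$ are both $n$-independent, the regularity constant $C_0$ is automatically $n$-independent, and the index $n$ only enters through the innocuous orthogonality factor $\sqrt{\pi}$. This is the chief advantage of the lift: a direct one-dimensional energy argument using $L_n$ would need to absorb the singular coefficient $n^2/r^2$ via Hardy-type inequalities and track constants carefully, which is where $n$-uniformity could quietly be lost. The hypothesis $n\ge 2$ is not really needed for this approach and is presumably included only because \re{4.92}--\re{4.95} are invoked only for $n\ge 2$ in the instability analysis.
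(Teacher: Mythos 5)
Your proof is correct, but it takes a genuinely different route from the paper's. The paper works directly with the one--dimensional operator $L_n$: it regularizes on the punctured domain $\epsilon<r<R$, multiplies the equation by $w_\epsilon/r^2$, and integrates by parts to obtain the weighted energy estimate \re{4.101}, whose coercivity rests on the coefficient $\frac{n^2}{2}-\frac{3}{2}$ being positive --- this is precisely where the hypothesis $n\ge 2$ is used; it then recovers $\partial^2_r w_\epsilon\in L^2$ from the equation, lets $\epsilon\to 0$, and gets \re{4.100} from the embedding $H^2(B_R\setminus B_{R/2})\hookrightarrow C^{1+1/2}$. You instead lift to the two--dimensional Dirichlet problem $-\Delta W=b\cos(n\theta)$, $W|_{\partial B_R}=0$, and invoke standard $H^2$ regularity and the trace theorem on the fixed disk, so that $n$-independence of the constants is structural rather than tracked by hand. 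Your identification of the unique $H^1_0$ solution with the separated form $w(r)\cos(n\theta)$ deserves one explicit line (project onto the $n$-th Fourier mode in $\theta$ and use uniqueness), and the $H^2$ norm of the lift controls, a fortiori, the radial $H^2$ norm appearing in \re{4.99}, so the translation back is harmless. What each approach buys: the paper's argument is elementary and self-contained, and its estimate \re{4.101} even exhibits an extra $n^{-2}$ gain on the weighted lower--order terms; your argument avoids the $\epsilon$-regularization and limit entirely, is shorter, and --- as you correctly observe --- works for all $n\ge 0$, the restriction $n\ge 2$ being an artifact of the coercivity constant in the paper's multiplier choice rather than of the statement itself.
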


\begin{proof}
Let us consider the approximate equation to \re{4.97} in $\epsilon < r <R$ with zero boundary values on $x=R$ and $x=\epsilon$, where $\epsilon > 0 $ is arbitrarily small. We denote by $w_\epsilon$ the corresponding classical solution. Multiplying \re{4.97} by $\frac{w_\epsilon}{r^2}$ and integrating over $B_R \backslash B_\epsilon$, we obtain:
\begin{equation*}
\begin{split}
    \int_\epsilon^R \Big|\frac{\partial w_\epsilon}{\partial r}\Big|^2 \frac{1}{r}\dif r + n^2\int_\epsilon^R |w_\epsilon|^2 \frac{1}{r^3}\dif r =&\; \int_\epsilon^R b(r,t) \frac{w_\epsilon}{r}\dif r + 2\int_\epsilon^R \frac{\partial w_\epsilon}{\partial r}\frac{w_\epsilon}{r^2}\dif r \\
    \le &\; \frac{1}{2 n^2} \int_\epsilon^R |b(r,t)|^2 r \dif r + \frac{n^2}{2} \int_\epsilon^R |w_\epsilon|^2 \frac{1}{r^3}\dif r\\
    &\; + \frac23 \int_\epsilon^R \Big|\frac{\partial w_\epsilon}{\partial r}\Big|^2 \frac{1}{r}\dif r + \frac32 \int_\epsilon^R |w_\epsilon|^2 \frac{1}{r^3} \dif r,
    \end{split}
\end{equation*}
from which it follows that
\begin{equation}\label{4.101}
\frac13 \int_\epsilon^R \Big|\frac{1}{r}\frac{\partial w_\epsilon}{\partial r}\Big|^2 r \dif r + \Big(\frac{n^2}{2}-\frac32\Big) \int_\epsilon^R  \Big| \frac{w_\epsilon}{r^2}\Big|^2 r \dif r \le \frac{1}{2n^2}\int_\epsilon^R |b(r,t)|^2 r \dif r.
\end{equation}
The equation \re{4.97}, together with the fact that $b(\cdot,t)\in L^2(B_R)$, implies $\frac{\partial^2 w_\epsilon}{\partial r^2}\in L^2(B_R\backslash B_\epsilon)$. Therefore $w_\epsilon\in H^2(B_R\backslash B_\epsilon)$, and
\begin{equation}\label{4.102}
    \|w_\epsilon(\cdot, t)\|_{H^2(B_R\backslash B_\epsilon)} \le C\Big [\int_0^R |b(r,t)|^2 r \dif r\Big ]^{1/2},
\end{equation}
where $C$ is independent of $n$. Letting $\epsilon \rightarrow 0$, we obtain a solution $w$ to \re{4.97} and \re{4.98} with estimate \re{4.99}. The uniqueness of solution $w$ in $H^2(B_R)$ follows by taking $b=0$ and using \re{4.99}.

Next, since $H^2(B_R\backslash B_{R/2})\hookrightarrow C^{1+1/2}(B_R\backslash B_{R/2})$, we have
$$\| w(\cdot,t)\|_{C^{1+1/2}(B_R\backslash B_{R/2})} \le C\Big [\int_0^R |b(r,t)|^2 r \dif r\Big ]^{1/2},$$
which immediately implies \re{4.100}.
\end{proof}

Applying Lemma \ref{thm4.6} on \re{4.96}, we obtain when $n\ge 2$,
$$\| u_n^{(1)}\|_{H^2(B_{R_*^0})}  \le Q(n)|\rho_n^0(t)|,$$
$$\Big| \frac{\partial u_n^{(1)}(R_*^0,t)}{\partial r}\Big| \le Q(n)|\rho_n^0(t)|.$$
From Lemma \ref{thm4.5}, we know that when $n\ge 2$  and $\mu < \mu_*$,  there exists a constant $\delta > 0$ such that $|\rho_n^0(t)| \le |\rho_n^0(0)|e^{-\delta n^3 t} \text{ for all } t>0.$ It follows that
\begin{equation}
    \label{4.103}\| u_n^{(1)}\|_{H^2(B_{R_*^0})}  \le Q(n)|\rho_n^0(t)| \le C e^{-\delta n^3 t},
\end{equation}
\begin{equation}
    \label{4.104}\Big| \frac{\partial u_n^{(1)}(R_*^0,t)}{\partial r}\Big|\le Q(n)|\rho_n^0(t)| \le C e^{-\delta n^3 t}.
\end{equation}
Similarly, we can derive the same estimates for $u_n^{(2)}$ and $u_n^{(3)}$, namely,
\begin{equation}
    \label{4.105}\| u_n^{(2)}\|_{H^2(B_{R_*^0})}   \le C e^{-\delta n^3 t},
\end{equation}
\begin{equation}
    \label{4.106}\| u_n^{(3)}\|_{H^2(B_{R_*^0})}   \le C e^{-\delta n^3 t},
\end{equation}
\begin{equation}
    \label{4.107}\Big| \frac{\partial u_n^{(2)}(R_*^0,t)}{\partial r}\Big| \le C e^{-\delta n^3 t},
\end{equation}
\begin{equation}
    \label{4.108}\Big| \frac{\partial u_n^{(3)}(R_*^0,t)}{\partial r}\Big| \le C e^{-\delta n^3 t}.
\end{equation}
In order to take summation with respect to $n$ in the perturbed Fourier series, we restrict \re{4.103} --- \re{4.108} to $t\ge t_0$ for some small positive constant $t_0$. With these estimates in hand, the stability for the case $\mu < \mu_*$ will be determined by $u_n^{(4)}$. The solution $u_n^{(4)}$ to \re{4.95} is clearly given in the form,
\begin{equation}
    \label{4.109} u_n^{(4)}(r,t) = C_4(t) r^n,
\end{equation}
where $C_4(t)$ is determined by the boundary condition in \re{4.95}. Combining the boundary condition from \re{4.84} and \re{4.89}, we have
\begin{gather*}
    C_4(t)(R_*^0)^n = q_n^1(R_*^0,t) + \mu w_n^1(R_*^0,t) = \Big[ \frac{n^2 -1}{(R_*^0)^2} - \mu\frac{I_1(R_*^0)}{I_0(R_*^0)}\Big] \rho_n^1(t) + H(R_*^0,R_*^1)\rho_n^0(t),\\
    C_4(t) = \frac{1}{(R_*^0)^n}\Big[ \frac{n^2 -1}{(R_*^0)^2} - \mu\frac{I_1(R_*^0)}{I_0(R_*^0)}\Big] \rho_n^1(t) + \tilde{H}(R_*^0, R_*^1)\rho_n^0(t),
\end{gather*}
where $H$ and $\tilde{H}$ are functions of $R_*^0$ and $R_*^1$. Now let us combine $u_n^{(1)}, u_n^{(2)}, u_n^{(3)}$ and $u_n^{(4)}$ together,
\begin{equation*}
    q_n^1(r,t) =  \eta_n^1 - \mu w_n^1 = u_n^{(1)} + u_n^{(2)} + u_n^{(3)} + u_n^{(4)} - \mu w_n^1.
\end{equation*}
Note that we only need to evaluate $\frac{\partial q_n^1}{\partial r}(R_*^0,t)$, thus
\begin{equation}\label{4.110}
    \frac{\partial q_n^1(R_*^0,t)}{\partial r} = \frac{\partial u_n^{(1)}(R_*^0,t)}{\partial r} + \frac{\partial u_n^{(2)}(R_*^0,t)}{\partial r} + \frac{\partial u_n^{(3)}(R_*^0,t)}{\partial r} + \frac{\partial u_n^{(4)}(R_*^0,t)}{\partial r} - \mu \frac{\partial w_n^1(R_*^0,t)}{\partial r}.
\end{equation}

So far, we have calculated all the expressions needed in \re{4.85}. To get the equation for $\rho_n^1(t)$, we substitute $\frac{\partial^2 p_*^0}{\partial r^2}$ from \re{4.60}, $\frac{\partial^3 p_*^0}{\partial r^3}$ from \re{4.61}, $\frac{\partial^2 p_*^1}{\partial r^2}$ from \re{4.87}, $\frac{\partial^2 q_n^0}{\partial r^2}$ from \re{4.65}, and $\frac{\partial q_n^1}{\partial r}$ from \re{4.110}, into \re{4.85}, and get for $n\ge2$,
\begin{equation*}
\begin{split}
    \frac{\dif \rho_n^1(t)}{\dif t}
    =&\; -\frac{\partial^2 p_*^0}{\partial r^2}(R_*^0)\rho_n^1(t)-\frac{\partial^3 p_*^0}{\partial r^3}(R_*^0)R_*^1 \rho_n^0(t)-\frac{\partial^2 p_*^1}{\partial r^2}(R_*^0)\rho_n^0(t)-\frac{\partial^2 q_n^0}{\partial r^2}(R_*^0,t)R_*^1 - \frac{\partial q_n^1}{\partial r}(R_*^0,t)\\
    =&\; \Big[  \mu - \mu\frac{2I_1(R_*^0)}{R_*^0 I_0(R_*^0)} - \frac{n(n^2-1)}{(R_*^0)^3} - \mu\frac{I_1(R_*^0)I_{n+1}(R_*^0)}{I_0(R_*^0)I_n(R_*^0)}\Big] \rho_n^1(t) + C(n,R_*^0,R_*^1)\rho_n^0(t)\\
    &\; - \frac{\partial u_n^{(1)}(R_*^0,t)}{\partial r} - \frac{\partial u_n^{(2)}(R_*^0,t)}{\partial r} - \frac{\partial u_n^{(3)}(R_*^0,t)}{\partial r},\\
    \end{split}
\end{equation*}
thus,
\begin{equation}\label{4.111}
    \begin{split}
        &\; \Big|\frac{\dif \rho_n^1(t)}{\dif t} + \Big( \mu\frac{2I_1(R_*^0)}{R_*^0 I_0(R_*^0)} + \frac{n(n^2-1)}{(R_*^0)^3} + \mu\frac{I_1(R_*^0)I_{n+1}(R_*^0)}{I_0(R_*^0)I_n(R_*^0)}-\mu\Big) \rho_n^1(t)\Big|\\
        &\;\le |C(n,R_*^0,R_*^1)\rho_n^0(t)| + \Big|\frac{\partial u_n^{(1)}(R_*^0,t)}{\partial r}\Big| + \Big|\frac{\partial u_n^{(2)}(R_*^0,t)}{\partial r}\Big| + \Big| \frac{\partial u_n^{(3)}(R_*^0,t)}{\partial r}\Big|\\
        &\; \le C e^{-\delta n^3 t}.
    \end{split}
\end{equation}
To further analyze \re{4.111}, we introduce the following lemma.

\begin{lem}\label{thm4.7}
Suppose $f(t)$ satisfies
\begin{equation}
    \label{4.112}
    \Big| \frac{\dif f(t)}{\dif t} + d_1 f(t)\Big| \le Ce^{-d_2t}, \hspace{2em}\forall t>0
\end{equation}
with $d_1 \neq d_2 >0$ and the initial value $|f(0)|$ bounded, then we have
\begin{equation}
    \label{4.113}
    |f(t)| \le Ce^{-dt},
\end{equation}
where $d = \min\{d_1,d_2\}$.
\end{lem}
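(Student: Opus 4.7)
The plan is to treat the differential inequality as a linear ODE perturbed by a source of known decay rate, and to solve it explicitly using the integrating factor method. Specifically, I would set $g(t) := f'(t) + d_1 f(t)$, so that $|g(t)| \le C e^{-d_2 t}$ by hypothesis. Multiplying by the integrating factor $e^{d_1 t}$ gives
\[
\frac{\dif}{\dif t}\bigl(e^{d_1 t} f(t)\bigr) = e^{d_1 t} g(t),
\]
and integrating from $0$ to $t$ yields the explicit representation
\[
f(t) = e^{-d_1 t} f(0) + e^{-d_1 t}\int_0^t e^{d_1 s} g(s)\,\dif s.
\]

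From here, the estimate is straightforward. Using $|g(s)| \le C e^{-d_2 s}$ and evaluating the elementary integral $\int_0^t e^{(d_1-d_2)s}\,\dif s = (e^{(d_1-d_2)t}-1)/(d_1-d_2)$ (which is legitimate precisely because $d_1 \neq d_2$), one obtains
\[
|f(t)| \le |f(0)|\,e^{-d_1 t} + \frac{C}{|d_1-d_2|}\bigl|e^{-d_2 t} - e^{-d_1 t}\bigr|.
\]
Since each term on the right is bounded by a constant multiple of $e^{-dt}$ with $d = \min\{d_1,d_2\}$, the conclusion $|f(t)| \le Ce^{-dt}$ follows after relabeling the constant $C$.

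There is no real obstacle in this proof: the only subtlety is the hypothesis $d_1 \neq d_2$, which is exactly what is needed to avoid a $te^{-dt}$ factor in the integral (such a factor would still decay, but strictly slower than $e^{-dt}$, so the stated conclusion would fail by a polynomial factor). The boundedness of $|f(0)|$ is used only to control the homogeneous part $e^{-d_1 t} f(0)$. This lemma will then be applied with $f = \rho_n^1$, $d_1 = \mu\frac{2I_1(R_*^0)}{R_*^0 I_0(R_*^0)} + \frac{n(n^2-1)}{(R_*^0)^3} + \mu\frac{I_1(R_*^0)I_{n+1}(R_*^0)}{I_0(R_*^0)I_n(R_*^0)}-\mu$ and $d_2 = \delta n^3$ coming from \re{4.111}, to propagate the first-order decay estimate on $\rho_n^1(t)$.
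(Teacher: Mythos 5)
Your proof is correct and is essentially the same as the paper's: both apply the integrating factor $e^{d_1 t}$, integrate from $0$ to $t$, evaluate $\int_0^t e^{(d_1-d_2)s}\,\dif s$ using $d_1\neq d_2$, and bound the resulting combination of $e^{-d_1 t}$ and $e^{-d_2 t}$ by $e^{-dt}$ with $d=\min\{d_1,d_2\}$. The only cosmetic difference is that the paper works with the two-sided inequality throughout rather than introducing the source term $g$ explicitly.
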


\begin{proof}
\re{4.112} is equivalent to
$$-C e^{-d_2 s} \le \frac{\dif f(s)}{\dif s} + d_1 f(s) \le C e^{-d_2 s},$$
and thus
$$-C e^{(d_1-d_2)s} \le \frac{\dif (e^{d_1s}f(s))}{\dif s} \le C e^{(d_1-d_2)s}.$$
Integrating $s$ from 0 to $t$, we derive for any $t>0$,
$$-C \int_0^t e^{(d_1-d_2)s}\dif s \le e^{d_1 t}f(t) - f(0)\le C\int_0^t e^{(d_1-d_2)s} \dif s,$$
$$\Big[f(0)+\frac{C}{d_1-d_2}\Big] e^{-d_1 t} - \frac{C}{d_1-d_2}e^{-d_2 t} \le f(t) \le \Big[f(0)-\frac{C}{d_1-d_2}\Big] e^{-d_1 t} + \frac{C}{d_1-d_2}e^{-d_2 t}.$$
If $d_1 >d_2 >0$, $e^{-d_1 t} < e^{-d_2 t}$, the above equation implies $|f(t)| \le C e^{-d_2 t}$; if $d_1 < d_2$, the above equation implies $|f(t)| \le C e^{-d_1 t}$.
\end{proof}

From Lemma \ref{thm4.5}, we have for $\mu < \mu_*$,
$$\mu\frac{2I_1(R_*^0)}{R_*^0 I_0(R_*^0)} + \frac{n(n^2-1)}{(R_*^0)^3} + \mu\frac{I_1(R_*^0)I_{n+1}(R_*^0)}{I_0(R_*^0)I_n(R_*^0)}-\mu > \delta n^3 > 0.$$
Therefore we apply Lemma \ref{thm4.7} on \re{4.111}, and get $|\rho_n^1(t)| \le Ce^{-\delta n^3 t}$, i.e., $|\rho_n^1(t)|$ is exponentially decreasing.

\vspace{15pt}
{\bf Case 2: When $n = 0$}\\
For $n = 0$, the estimates (4.71) and (4.72) follow from the standard $L^2$ and Schauder theory for elliptic equations. By Lemma 4.1,  there exists $\delta>0$ such that $$|\rho_0^0(t)| \le C e^{-\delta t}.$$
Following similar procedures as in case 1, we can derive
\begin{equation}
\label{4.114}
    \Big|\frac{\dif \rho_0^1(t)}{\dif t} + \mu\Big(-1 + \frac{2 I_1(R_*^0)}{R_*^0 I_0(R_*^0)} +  \frac{I_1^2(R_*^0)}{I_0^2(R_*^0)}\Big) \rho_0^1(t)\Big| \le C e^{-\delta t}.
\end{equation}
Again, from Lemma \ref{thm4.1},
$$\mu\Big(-1 + \frac{2 I_1(R_*^0)}{R_*^0 I_0(R_*^0)} +  \frac{I_1^2(R_*^0)}{I_0^2(R_*^0)}\Big) > \delta >0.$$
Using Lemma \ref{thm4.7}, we have $|\rho_0^1(t)| \le Ce^{-\delta t}$ , in other words, $|\rho_0^1(t)|$ is also exponentially decreasing.

\vspace{10pt}
{\bf Case 3: When $n = 1$}

\begin{thm}
For $n=1$ and any $\mu > 0$, we have $\rho_1^1(t) = \rho_1^1(0)$, for all $t>0$.
\end{thm}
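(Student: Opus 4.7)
The conceptual reason this identity holds is that the $n=1$ Fourier mode (both $\cos\theta$ and $\sin\theta$) encodes an infinitesimal translation of the tumor center. Since the full system \re{1.10}--\re{1.17} is invariant under spatial translations, translating the radially symmetric stationary solution $(\sigma_*,p_*,R_*)$ by any small vector yields another stationary solution, and the corresponding $\epsilon$-order perturbation is necessarily time-independent. I would like to exploit this observation to bypass a long direct computation.

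The plan is as follows. First, I would specialize equation \re{4.85} to $n=1$ and write it as an ODE of the form
\[
 \frac{\dif \rho_1^1(t)}{\dif t} \;=\; A\,\rho_1^1(t) \;+\; B\,\rho_1^0(t),
\]
where $A$ is built from $-\partial_r^2 p_*^0(R_*^0)$ together with the $\rho_1^1$-coefficient of $\partial_r q_1^1(R_*^0,t)$, and $B$ collects the $\rho_1^0$-coefficient coming from $\partial_r^3 p_*^0(R_*^0)R_*^1$, $\partial_r^2 p_*^1(R_*^0)$, $\partial_r^2 q_1^0(R_*^0,t)R_*^1$, and the $\rho_1^0$-part of $\partial_r q_1^1(R_*^0,t)$ (where the relevant contributions come only from $u_1^{(4)}$ and $w_1^1$; the piece $u_1^{(3)}$ vanishes because $\dot\rho_1^0\equiv 0$ by Lemma \ref{thm4.2}).

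Next I would verify that $A=0$ by direct substitution: using $C_4(t)$ with $n=1$, the explicit form of $w_1^1$ from \re{4.89}, and \re{4.60}, the $\rho_1^1$-coefficients simplify to
\[
 -\mu\Big(\tfrac{2I_1(R_*^0)}{R_*^0 I_0(R_*^0)}-1\Big)\;-\;\Big(\mu-\tfrac{2\mu I_1(R_*^0)}{R_*^0 I_0(R_*^0)}\Big)\;=\;0,
\]
which is the same Bessel cancellation $I_0(x)-I_2(x)=\tfrac{2}{x}I_1(x)$ that drove Lemma \ref{thm4.2}. So the ODE reduces to $\dot\rho_1^1=B\rho_1^0(0)$ with $\rho_1^0(0)$ arbitrary.

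Finally, for $B=0$, I would invoke translation invariance instead of grinding through the remaining algebra. Translating the stationary tumor by a vector of size $\epsilon$ yields a stationary solution of \re{1.10}--\re{1.15} with boundary $r=R_*+\epsilon\cos\theta+O(\epsilon^2)$, i.e.\ $\rho_1(t)\equiv 1$. Expanding in $\tau$, this forces $\rho_1^0(t)\equiv 1$, $\rho_1^1(t)\equiv 0$; plugging in gives $B\cdot 1=0$, hence $B=0$. Combined with $A=0$, equation \re{4.85} for $n=1$ collapses to $\dot\rho_1^1=0$, yielding $\rho_1^1(t)=\rho_1^1(0)$. The main obstacle would be Step 3 if done by brute force: showing the $\rho_1^0$ pieces cancel requires combining \re{4.61}, \re{4.65}, \re{4.87}, the $\rho_1^0$-part of $C_4(t)$, and the $R_*^1\rho_1^0$-part of $w_1^1$, together with several Bessel identities \re{4.22}--\re{4.25}; the translation-invariance shortcut makes this cancellation manifest without the computation.
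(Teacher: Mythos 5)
Your proposal is correct in substance but reaches the conclusion by a genuinely different route from the paper for the crucial cancellation. The paper proves the theorem entirely by explicit computation: it solves \re{4.119}--\re{4.120} in closed form via the particular solutions \re{4.125}--\re{4.127} and the homogeneous solution $u_1^{(4)}=r$, assembles $\frac{\partial q_1^1}{\partial r}(R_*^0,t)$ in \re{4.131}, substitutes everything into \re{4.85}, and observes in \re{4.132} that the $\rho_1^1$-coefficient cancels identically (your ``$A=0$'', by the same identity $I_0(x)-I_2(x)=\frac{2}{x}I_1(x)$ you invoke), while the $\rho_1^0$-coefficient reduces to an expression that vanishes \emph{only after} substituting the explicit formula for $R_*^1$ from \re{4.79}/\re{4.133}; see \re{4.134}. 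You obtain the $\rho_1^0$-cancellation (``$B=0$'') instead from translation invariance: the translated stationary tumor is an exact time-independent solution with $\rho_1\equiv 1$ independent of $\tau$, hence $\rho_1^0\equiv 1$, $\rho_1^1\equiv 0$ solves \re{4.85}, forcing $B=0$. This is sound because, for $n=1$, \re{4.85} is a constant-coefficient linear ODE in $(\rho_1^0,\rho_1^1)$: all of $w_1^0,q_1^0,w_1^1,q_1^1$ are slaved to $\rho_1^0(t),\rho_1^1(t)$ through time-independent elliptic problems, and $\dot\rho_1^0\equiv0$ by Lemma \ref{thm4.2}, so a single nontrivial stationary solution with $\rho_1^1=0$ pins down $B$. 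What your route buys is the avoidance of the long Bessel computation \re{4.132}--\re{4.134}, and it explains \emph{why} the formula for $R_*^1$ must enter: $B$ vanishes only at the true value of $R_*^1$, precisely because the translated solution is built on the true stationary tumor. What the paper's route buys is an independent consistency check of the lengthy linearization and $\tau$-expansion; your argument presupposes that \re{4.80}--\re{4.85} were derived correctly, since otherwise the translation mode need not come out neutral for the system as written.

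Two small inaccuracies, neither fatal. First, the parenthetical claim that the $\rho_1^0$-part of $\frac{\partial q_1^1}{\partial r}(R_*^0,t)$ receives contributions only from $u_1^{(4)}$ and $w_1^1$ is wrong: the particular solutions driven by $\mu\frac{\partial\sigma_*^0}{\partial r}\frac{\partial q_1^0}{\partial r}$ and $\mu\frac{\partial w_1^0}{\partial r}\frac{\partial p_*^0}{\partial r}$ (the paper's $u_1^{(1)},u_1^{(2)}$ in \re{4.125}--\re{4.126}) supply the $\mu^2\rho_1^0$ terms in \re{4.131}; the statement is true only for the $\rho_1^1$-part, which is all you actually use for $A=0$. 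Second, your brute-force checklist for $B$ omits the essential substitution of \re{4.79}; without it the $\rho_1^0$-coefficient does not cancel identically in $R_*^1$. Neither point affects the validity of the argument as you actually run it.
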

\begin{proof}
For $n=1$, from \re{4.58}, \re{4.62}, and \re{4.63}, we have
\begin{gather}
w_1^0(r,t) = -\frac{I_1(r)}{I_0(R_*^0)}\rho_1^0(t),\label{4.115}\\
q_1^0(r,t) = -\mu\frac{I_1(R_*^0)}{R_*^0 I_0(R_*^0)}\rho_1^0(t)r + \mu\frac{I_1(r)}{I_0(R_*^0)}\rho_1^0(t).\label{4.116}
\end{gather}
Differentiating $q_1^0(r,t)$ with respect to $r$ twice and evaluating at $r=R_*^0$, using also \re{4.22}, we obtain
\begin{equation}
    \label{4.117}
    \frac{\partial^2 q_1^0}{\partial r^2}(R_*^0,t) = \mu\rho_1^0(t)\Big[-\frac{1}{R_*^0} + \frac{2I_1(R_*^0)}{(R_*^0)^2 I_0(R_*^0)} + \frac{I_1(R_*^0)}{I_0(R_*^0)}\Big].
\end{equation}
We already derived the formula of $w_1^1(r,t)$ in \re{4.89}. Using \re{4.25} to simplify, we have
\begin{equation}
    \label{4.118} w_1^1(r,t) = \Big[ \frac{I_1(R_*^0)}{I_0^2(R_*^0)}R_*^1 \rho_1^0(t) - \frac{1}{I_0(R_*^0)}\rho_1^1(t)\Big] I_1(r).
\end{equation}
Next let us find the expression for $q_1^1(r,t)$.
Substituting \re{4.115}, \re{4.116}, and \re{4.118} into \re{4.84}, noting that $\frac{\partial w_1^0}{\partial t}= -\frac{I_1(r)}{I_0(R_*^0)}\frac{\partial \rho_1^0(t)}{\partial t} = 0$ by Lemma \ref{thm4.2}, and using also \re{4.22}, \re{4.23}, \re{4.25}, \re{4.51}, \re{4.52}, and \re{4.57}, we derive the equation for $q_1^1(r,t)$
\begin{equation}
\begin{split}
    \label{4.119}
    -\frac{\partial^2 q_1^1}{\partial r^2}-\frac1r\frac{\partial q_1^1}{\partial r}+\frac{1}{r^2}q_1^1 =& -\mu^2\rho_1^0(t)\frac{I_1(R_*^0)}{R_*^0 I_0^2(R_*^0)} rI_0(r)+ \mu^2\rho_1^0(t)\frac{2}{I_0^2(R_*^0)}I_1(r)\Big[ I_0(r)-\frac{I_1(r)}{r}\Big]\\ &+ \mu\Big[ \frac{I_1(R_*^0)}{I_0^2(R_*^0)}R_*^1 \rho_1^0(t) - \frac{1}{I_0(R_*^0)}\rho_1^1(t)\Big] I_1(r),
    \end{split}
\end{equation}
with boundary condition
\begin{equation}
    \label{4.120}
    q_1^1(R_*^0,t) = -\mu\frac{I_2(R_*^0)}{I_0(R_*^0)}R_*^1\rho_1^0(t).
\end{equation}
To solve ODE \re{4.119} and \re{4.120}, we separate the solution into particular solutions $u_1^{(1)}$, $u_1^{(2)}$, $u_1^{(3)}$ and general solution $u_1^{(4)}$, where $u_1^{(1)}$, $\cdots$, $u_1^{(4)}$ satisfy the following equations, respectively,
\begin{gather}
    -\frac{\partial^2 u_1^{(1)}}{\partial r^2}-\frac1r\frac{\partial u_1^{(1)}}{\partial r}+\frac{1}{r^2}u_1^{(1)} = -\mu^2\rho_1^0(t)\frac{I_1(R_*^0)}{R_*^0 I_0^2(R_*^0)} rI_0(r),\label{4.121}\\
    -\frac{\partial^2 u_1^{(2)}}{\partial r^2}-\frac1r\frac{\partial u_1^{(2)}}{\partial r}+\frac{1}{r^2}u_1^{(2)} = \mu^2\rho_1^0(t)\frac{2}{I_0^2(R_*^0)}I_1(r)\Big[ I_0(r)-\frac{I_1(r)}{r}\Big],\label{4.122}\\
    -\frac{\partial^2 u_1^{(3)}}{\partial r^2}-\frac1r\frac{\partial u_1^{(3)}}{\partial r}+\frac{1}{r^2}u_1^{(3)} = \mu\Big[ \frac{I_1(R_*^0)}{I_0^2(R_*^0)}R_*^1 \rho_1^0(t) - \frac{1}{I_0(R_*^0)}\rho_1^1(t)\Big] I_1(r),\label{4.123}\\
    -\frac{\partial^2 u_1^{(4)}}{\partial r^2}-\frac1r\frac{\partial u_1^{(4)}}{\partial r}+\frac{1}{r^2}u_1^{(4)} = 0, \hspace{1em} u_1^{(1)} + u_1^{(2)} + u_1^{(3)} + u_1^{(4)}\Big|_{r=R_*^0}=-\mu\frac{I_2(R_*^0)}{I_0(R_*^0)}R_*^1\rho_1^0(t).\label{4.124}
\end{gather}
From the properties of Bessel function \re{4.22}--\re{4.25}, the functions $y_1(r) = \frac{r}{2}(1-2I_2(r))$, $y_2(r) = \frac{r}{4}(-I_1^2(r)+I_0(r)I_2(r))$ and $y_3(r) = -I_1(r)$ satisfy the following equations
$$-y_1'' - \frac1r y_1' + \frac{1}{r^2}y_1= rI_0(r), \hspace{1em} y_1' = \frac{1}{2}+I_2(r)-rI_1(r),$$
$$-y_2'' - \frac1r y_2' + \frac{1}{r^2}y_2 = I_1(r)\Big[ I_0(r)-\frac{I_1(r)}{r}\Big],\hspace{1em} y_2'=-\frac{I_0^2(r)}{4}+\frac{I_0(r)I_1(r)}{2r} - \frac{I_1^2(r)}{4},$$
$$-y_3'' - \frac1r y_3' + \frac{1}{r^2}y_3 = I_1(r),\hspace{1em} y_3'=-I_0(r)+\frac{I_1(r)}{r}.$$
Thus, we obtain
\begin{gather}
    u_1^{(1)} = -\mu^2\rho_1^0(t)\frac{I_1(R_*^0)}{R_*^0 I_0^2(R_*^0)}y_1(r) = -\mu^2\rho_1^0(t)\frac{I_1(R_*^0)}{R_*^0 I_0^2(R_*^0)}\frac{r}{2}(1-2I_2(r)),\label{4.125}\\
    u_2^{(2)} = \mu^2\rho_1^0(t)\frac{2}{I_0^2(R_*^0)}y_2(r) = \mu^2\rho_1^0(t)\frac{1}{2I_0^2(R_*^0)}r(-I_1^2(r)+I_0(r)I_2(r)),\label{4.126}\\
    u_3^{(3)} = \mu\Big[ \frac{I_1(R_*^0)}{I_0^2(R_*^0)}R_*^1 \rho_1^0(t) - \frac{1}{I_0(R_*^0)}\rho_1^1(t)\Big]y_3(r) =\mu\Big[ \frac{1}{I_0(R_*^0)}\rho_1^1(t)-\frac{I_1(R_*^0)}{I_0^2(R_*^0)}R_*^1 \rho_1^0(t)\Big]I_1(r).\label{4.127}
\end{gather}
In addition, the general solution $u_1^{(4)}$ takes the form of
\begin{equation}
    \label{4.128}
    u_1^{(4)} = r.
\end{equation}
Combining \re{4.125}, \re{4.126}, \re{4.127}, and \re{4.128}, we find that the solution to \re{4.119} should be
\begin{equation}
    \label{4.129}
    q_1^1(r,t) = C_5(t)u_1^{(4)} + u_1^{(1)} + u_1^{(2)} + u_1^{(3)},
\end{equation}
where $C_5(t)$ is determined by the boundary condition in \re{4.124}, namely,
\begin{equation*}
    \begin{split}
        C_5(t) R_*^0 =&\; q_1^1(R_*^0,t) - u_1^{(1)}(R_*^0) - u_1^{(2)}(R_*^0) - u_1^{(3)}(R_*^0)\\
        =&\; -\mu R_*^1\rho_1^0(t)\frac{I_2(R_*^0)}{I_0(R_*^0)} + \mu^2\rho_1^0(t)\frac{I_1(R_*^0)}{ 2I_0^2(R_*^0)}\Big[1-2I_2(R_*^0)\Big] - \mu^2\rho_1^0(t)\frac{R_*^0}{2I_0^2(R_*^0)}\Big[-I_1^2(R_*^0)\\
        &\; +I_0(R_*^0)I_2(R_*^0)\Big]
        - \mu\Big[ \frac{1}{I_0(R_*^0)}\rho_1^1(t)-\frac{I_1(R_*^0)}{I_0^2(R_*^0)}R_*^1 \rho_1^0(t)\Big]I_1(R_*^0),
    \end{split}
\end{equation*}
which simplifies to
\begin{equation}
    \label{4.130}
    \begin{split}
        C_5(t) =& \;-\mu\rho_1^1(t)\frac{I_1(R_*^0)}{R_*^0 I_0(R_*^0)} +\mu^2\rho_1^0(t) \frac{I_1(R_*^0)}{2R_*^0 I_0^2(R_*^0)} -\mu^2\rho_1^0(t) \frac{I_1(R_*^0)I_2(R_*^0)}{R_*^0 I_0^2(R_*^0)}\\ &\; + \mu^2\rho_1^0(t)\frac{ I_1^2(R_*^0)}{2 I_0^2(R_*^0)}
         -\mu^2\rho_1^0(t)\frac{ I_2(R_*^0)}{2 I_0(R_*^0)}+ \mu R_*^1\rho_1^0(t)\frac{I_1^2(R_*^0)}{R_*^0 I_0^2(R_*^0)} - \mu R_*^1\rho_1^0(t)\frac{ I_2(R_*^0)}{R_*^0 I_0(R_*^0)}.
    \end{split}
\end{equation}

In order to compute $\rho_1^1(t)$ from \re{4.85}, we need the derivative value $\frac{\partial q_1^1}{\partial r}(R_*^0,t)$. We combine \re{4.128} and \re{4.129} to obtain
\begin{equation*}
    \begin{split}
        \frac{\partial q_1^1}{\partial r}(R_*^0,t) =& \;C_5(t) \frac{\partial u_1^{(4)}}{\partial r}(R_*^0) + \frac{\partial u_1^{(1)}}{\partial r}(R_*^0) + \frac{\partial u_1^{(2)}}{\partial r}(R_*^0) + \frac{\partial u_1^{(3)}}{\partial r}(R_*^0)\\
        =&\;C_5(t) + \frac{\partial u_1^{(1)}}{\partial r}(R_*^0) + \frac{\partial u_1^{(2)}}{\partial r}(R_*^0) + \frac{\partial u_1^{(3)}}{\partial r}(R_*^0).
    \end{split}
\end{equation*}
Applying \re{4.125}, \re{4.126}, \re{4.127}, and \re{4.130}, we then derive
\begin{equation}
\begin{aligned}
    \label{4.131}
    \frac{\partial q_1^1}{\partial r}(R_*^0,t) =&  -\mu\rho_1^1(t) \frac{I_1(R_*^0)}{R_*^0 I_0(R_*^0)} +\mu^2 \rho_1^0(t) \frac{I_1(R_*^0)}{2R_*^0 I_0^2(R_*^0)} -\mu^2\rho_1^0(t) \frac{I_1(R_*^0)I_2(R_*^0)}{R_*^0 I_0^2(R_*^0)} + \mu^2 \rho_1^0(t)\frac{ I_1^2(R_*^0)}{2 I_0^2(R_*^0)}\\
        & -\mu^2\rho_1^0(t)\frac{ I_2(R_*^0)}{2 I_0(R_*^0)}+ \mu R_*^1\rho_1^0(t) \frac{ I_1^2(R_*^0)}{R_*^0 I_0^2(R_*^0)} - \mu R_*^1\rho_1^0(t) \frac{ I_2(R_*^0)}{R_*^0 I_0(R_*^0)}
        -\mu^2\rho_1^0(t)\frac{I_1(R_*^0)}{2 R_*^0 I_0^2(R_*^0)} \\
        & - \mu^2\rho_1^0(t)\frac{I_1(R_*^0)I_2(R_*^0)}{R_*^0 I_0^2(R_*^0)} + \mu^2\rho_1^0(t) \frac{I_1^2(R_*^0)}{I_0^2(R_*^0)} - \frac{\mu^2}{2}\rho_1^0(t) + \mu^2\rho_1^0(t)\frac{I_1(R_*^0)}{R_*^0 I_0(R_*^0)}\\ &-\mu^2\rho_1^0(t)\frac{I_1^2(R_*^0)}{2 I_0^2(R_*^0)}
         +\mu\rho_1^1(t) - \mu\rho_1^1(t)\frac{I_1(R_*^0)}{R_*^0 I_0(R_*^0)} - \mu R_*^1\rho_1^0(t)\frac{ I_1(R_*^0)}{I_0(R_*^0)} + \mu R_*^1\rho_1^0(t)\frac{ I_1^2(R_*^0)}{R_*^0 I_0^2(R_*^0)}\\
        =&\; \mu\rho_1^1(t)\Big[-\frac{2I_1(R_*^0)}{R_*^0 I_0(R_*^0)} + 1\Big] + \mu R_*^1 \rho_1^0(t)\Big[\frac{2 I_1^2(R_*^0)}{R_*^0 I_0^2(R_*^0)} - \frac{I_2(R_*^0)}{R_*^0 I_0(R_*^0)} - \frac{I_1(R_*^0)}{I_0(R_*^0)}\Big] \\
        &+ \mu^2 \rho_1^0(t)\Big[-\frac{2I_1(R_*^0)I_2(R_*^0)}{R_*^0 I_0^2(R_*^0)} - \frac{I_2(R_*^0)}{2I_0(R_*^0)} + \frac{I_1^2(R_*^0)}{I_0^2(R_*^0)} - \frac12 + \frac{I_1(R_*^0)}{R_*^0 I_0(R_*^0)}\Big].
        \end{aligned}
\end{equation}

Finally, it remains to substitute $\frac{\partial^2 p_*^0}{\partial r^2}$ from \re{4.60}, $\frac{\partial^3 p_*^0}{\partial r^3}$ from \re{4.61}, $\frac{\partial^2 p_*^1}{\partial r^2}$ from \re{4.87}, $\frac{\partial^2 q_1^0}{\partial r^2}$ from \re{4.117}, and $\frac{\partial q_1^1}{\partial r}$ from \re{4.131}, into equation \re{4.85}, and collect terms containing $\rho_1^0(t)$ and $\rho_1^1(t)$,
\begin{equation}\label{4.132}
    \begin{split}
        \frac{\dif \rho_1^1(t)}{\dif t}=&\; -\frac{\partial^2 p_*^0}{\partial r^2}(R_*^0)\rho_1^1(t)-\frac{\partial^3 p_*^0}{\partial r^3}(R_*^0)R_*^1 \rho_1^0(t)-\frac{\partial^2 p_*^1}{\partial r^2}(R_*^0)\rho_1^0(t)-\frac{\partial^2 q_1^0}{\partial r^2}(R_*^0,t)R_*^1 - \frac{\partial q_1^1}{\partial r}(R_*^0,t) \\
    =&\; -\mu\rho_1^1(t)\Big[\frac{2I_1(R_*^0)}{R_*^0 I_0(R_*^0)}-1\Big] - \mu R_*^1 \rho_1^0(t)\Big[ \frac{1}{R_*^0} - \frac{2I_1(R_*^0)}{(R_*^0)^2 I_0(R_*^0)} - \frac{I_1(R_*^0)}{I_0(R_*^0)}\Big] - \mu^2\rho_1^0(t)\frac{I_1(R_*^0)}{R_*^0 I_0^2(R_*^0)}\Big[\\
    &\; -R_*^0 I_1(R_*^0)+ I_2(R_*^0)\Big] - \mu^2\rho_1^0(t) \frac{1}{2I_0^2(R_*^0)}\Big[I_0^2(R_*^0)-\frac{2I_0(R_*^0)I_1(R_*^0)}{R_*^0} + I_1^2(R_*^0)\Big]\\
    &\; -\mu R_*^1\rho_1^0(t)\frac{ I_1(R_*^0)}{I_0^2(R_*^0)}\Big[I_0(R_*^0) - \frac{I_1(R_*^0)}{R_*^0}\Big] - \mu R_*^1\rho_1^0(t)\Big[-\frac{1}{R_*^0} + \frac{2I_1(R_*^0)}{(R_*^0)^2 I_0(R_*^0)} + \frac{I_1(R_*^0)}{I_0(R_*^0)}\Big]\\
    &\; - \mu\rho_1^1(t)\Big[-\frac{2I_1(R_*^0)}{R_*^0 I_0(R_*^0)} + 1\Big] - \mu R_*^1 \rho_1^0(t)\Big[\frac{2 I_1^2(R_*^0)}{R_*^0 I_0^2(R_*^0)} - \frac{I_2(R_*^0)}{R_*^0 I_0(R_*^0)} - \frac{I_1(R_*^0)}{I_0(R_*^0)}\Big] \\
        &\; - \mu^2 \rho_1^0(t)\Big[-\frac{2I_1(R_*^0)I_2(R_*^0)}{R_*^0 I_0^2(R_*^0)} - \frac{I_2(R_*^0)}{2I_0(R_*^0)} + \frac{I_1^2(R_*^0)}{I_0^2(R_*^0)} - \frac12 + \frac{I_1(R_*^0)}{R_*^0 I_0(R_*^0)}\Big]\\
    =& \;\mu\rho_1^1(t)\Big[1-\frac{2 I_1(R_*^0)}{R_*^0 I_0(R_*^0)} + \frac{2 I_1(R_*^0)}{R_*^0 I_0(R_*^0)} -1\Big]
    + \mu R_*^1 \rho_1^0(t)\Big[-\frac{1}{R_*^0} + \frac{2 I_1(R_*^0)}{(R_*^0)^2 I_0(R_*^0)} + \frac{I_1(R_*^0)}{I_0(R_*^0)} \\
    &\; - \frac{I_1(R_*^0)}{I_0(R_*^0)} + \frac{I_1^2(R_*^0)}{R_*^0 I_0^2(R_*^0)} + \frac{1}{R_*^0} - \frac{2 I_1(R_*^0)}{(R_*^0)^2 I_0(R_*^0)} - \frac{I_1(R_*^0)}{I_0(R_*^0)} - \frac{2I_1^2(R_*^0)}{R_*^0 I_0^2(R_*^0)} + \frac{I_2(R_*^0)}{R_*^0 I_0(R_*^0)}+\frac{I_1(R_*^0)}{I_0(R_*^0)}\Big]\\
    &\; +\mu^2 \rho_1^0(t)\Big[\frac{I_1^2(R_*^0)}{I_0^2(R_*^0)} - \frac{I_1(R_*^0)I_2(R_*^0)}{R_*^0 I_0^2(R_*^0)} -\frac{1}{2} + \frac{I_1(R_*^0)}{R_*^0 I_0(R_*^0)} -\frac{I_1^2(R_*^0)}{2I_0^2(R_*^0)} +\frac{2I_1(R_*^0)I_2(R_*^0)}{R_*^0 I_0^2(R_*^0)} \\
    &\; + \frac{I_2(R_*^0)}{2I_0(R_*^0)} - \frac{I_1^2(R_*^0)}{I_0^2(R_*^0)} + \frac12 - \frac{I_1(R_*^0)}{R_*^0 I_0(R_*^0)}\Big]\\
    =&\;  \mu R_*^1 \rho_1^0(t)\Big[-\frac{I_1^2(R_*^0)}{R_*^0 I_0^2(R_*^0)} + \frac{I_2(R_*^0)}{R_*^0 I_0(R_*^0)}\Big] + \mu^2\rho_1^0(t)\Big[ \frac{I_1(R_*^0)I_2(R_*^0)}{R_*^0 I_0^2(R_*^0)} - \frac{I_1^2(R_*^0)}{2I_0^2(R_*^0)} + \frac{I_2(R_*^0)}{2I_0(R_*^0)}\Big].
    \end{split}
\end{equation}
Recalling we have formula for $R_*^1$ in \re{4.76},
\begin{equation}\label{4.133}
    R_*^1 = \mu\frac{-2 I_1(R_*^0)I_2(R_*^0) + R_*^0 I_1^2(R_*^0) - R_*^0 I_0(R_*^0)I_2(R_*^0)}{2(-I_1^2(R_*^0) + I_0(R_*^0)I_2(R_*^0))},
\end{equation}
substituting it into \re{4.132}, we deduce
\begin{equation}
    \begin{split}\label{4.134}
         \frac{\dif \rho_1^1(t)}{\dif t}=\;&\frac{\mu R_*^1\rho_1^0(t)}{2R_*^0 I_0^2(R_*^0)}\Big[2(-I_1^2(R_*^0)
         + I_0(R_*^0)I_2(R_*^0))\Big] \\
         &\; + \frac{\mu^2\rho_1^0(t)}{2R_*^0 I_0^2(R_*^0)}\Big[2I_1(R_*^0)I_2(R_*^0)-R_*^0 I_1^2(R_*^0) + R_*^0 I_0(R_*^0)I_2(R_*^0)\Big]\\
         =&\; \frac{\mu^2 \rho_1^0(t)}{2 R_*^0 I_0^2(R_*^0)}\Big[-2 I_1(R_*^0)I_2(R_*^0) + R_*^0 I_1^2(R_*^0) - R_*^0 I_0(R_*^0)I_2(R_*^0) \\
         &+ 2I_1(R_*^0)I_2(R_*^0)-R_*^0 I_1^2(R_*^0) + R_*^0 I_0(R_*^0)I_2(R_*^0)\Big]\\
         =&\; 0.
    \end{split}
\end{equation}
Hence $\rho_1^1(t) = \rho_1^1(0)$, which completes the proof.
\end{proof}

\begin{rem}
We just established, for $n=1$, after ignoring $O(\tau^2)$ terms,
$$r=R_* + \epsilon \rho_1(t)\cos\theta = R_* + \epsilon(\rho_1^0(t)+\tau\rho_1^1(t))\cos\theta
= R_* + \epsilon(\rho_1^0(0)+\tau\rho_1^1(0))\cos\theta,$$
is a transform of the origin. Thus $n=1$ mode would not affect the stability.
\end{rem}

\bigskip

{\bf Acknowledgment.} We are grateful to the referees for a very careful reading of
our manuscript, and for helpful suggestions.

\bigskip

\end{document}